\newtheorem{theorem}{Theorem}
\newtheorem{lema}[theorem]{Lemma}
\newtheorem{corollary}[theorem]{Corollary}
\newtheorem{proposition}[theorem]{Proposition}
\theoremstyle{definition}
\newtheorem{definition}{Definition}
\theoremstyle{remark}
\newtheorem{example}{Example}
\newtheorem{obs}{Remark}
\title{$\mathbb{Z}$-graded polynomial identities of the Grassmann algebra}
\author{Alan de Ara\'ujo Guimar\~aes\thanks{Supported by PhD Grant from CNPq, Brazil, and by CNPq grant No. 421129/2018-2}, Plamen Koshlukov\thanks{Partially supported by FAPESP grant No. 2018/23690-6 and by CNPq grant No. 302238/2019-0.}\\
Department of Mathematics, State University of Campinas\\
651 Sergio Buarque de Holanda, 13083-859 Campinas, SP, Brazil\footnote{A. A. Guimar\~aes' current address: Department of Mathematics, Federal University of Rio Grande do Norte, 59078-970 Natal, RN, Brazil}\\
e-mails: \texttt{alansimoes10@hotmail.com}, \texttt{plamen@unicamp.br}
}
\begin{document}
\maketitle

\begin{abstract}
Let $F$ be an infinite field of characteristic different from 2, and let $E$ be the Grassmann algebra of an infinite dimensional $F$-vector space $L$. In this paper we study the $\mathbb{Z}$-graded polynomial identities of $E$ with respect to certain $\mathbb{Z}$-grading such that the vector space $L$ is homogeneous in the grading. More precisely, we construct three types of $\mathbb{Z}$-gradings on $E$, denoted by $E^{\infty}$, $E^{k^\ast}$ and $E^{k}$, and we give the explicit form of the corresponding $\mathbb{Z}$-graded polynomial identities. We show that the homogeneous superalgebras $E_{\infty}$, $E_{k^\ast}$ and $E_{k}$ studied in  \cite{disil} can be obtained from $E^{\infty}$, $E^{k^\ast}$ and $E^{k}$ as quotient gradings. Moreover we exhibit several other types of homogeneous $\mathbb{Z}$-gradings on $E$, and describe their graded identities. 
\end{abstract}

\noindent\textbf{Keywords:} Grassmann algebra; Gradings; Graded polynomial identities

\noindent\textbf{2010 AMS MSC:} 16R40, 16R50, 16W50, 15A75

\section{Introduction}
Let $F$ be an infinite field of characteristic different from 2. If $L$ is a vector space over $F$ with basis $e_{1}$, $e_{2}$, \dots, the infinite dimensional Grassmann algebra $E$ of $L$ over $F$ is the vector space with a basis consisting of 1 and all products $e_{i_1}e_{i_2}\cdots e_{i_k}$ where $i_{1}<i_{2}<\cdots <i_{k}$, $k\geq 1$. The multiplication in $E$ is induced by $e_{i}e_{j}=-e_{j}e_{i}$ for all $i$ and $j$. We shall denote by $B=B_{E}$ the above canonical basis of $E$. The Grassmann algebra $E$ is one of the most important algebras satisfying a polynomial identity, also known as PI-algebras. Recall that in characteristic 2 the Grassmann algebra is commutative hence not very interesting from the point of view of PI theory. The polynomial identities of the Grassmann algebra were described by Latyshev \cite{latyshev}, and later on by Krakowski and Regev \cite{KR}. Recall that in the latter paper the structure of the ideal of identities of $E$ was described very tightly. The identities of $E$ have been extensively studied also in positive characteristic. The interested reader can consult the paper \cite{agpk} and the references therein for further information. In characteristic 0, it is the easiest example of an algebra which does not satisfy any standard identity $s_n$. We recall that the standard polynomial $s_n(x_1,\ldots, x_n) = \sum_{\sigma\in S_n} (-1)^\sigma x_{\sigma(1)}\cdots x_{\sigma(n)}$ is the alternating sum of all monomials obtained by $x_1\cdots x_n$ by permuting its variables. Here $S_n$ stands for the symmetric group on the letters $\{1,\ldots, n\}$, and $(-1)^\sigma$ is the sign of the permutation $\sigma$. This polynomial plays a prominent role in PI theory, especially in characteristic 0. We recall that a theorem of A. Kemer \cite{kemer_vol} asserts that every PI algebra over a field of characteristic $p>2$ satisfies some standard identity.

The Grassmann algebra has a natural $\mathbb{Z}_2$-grading $E=E_0\oplus E_1$ where $E_0$ is the span of 1 and all products $e_{i_1}\cdots e_{i_k}$ with even $k$ while $E_1$ is the span of the products with odd $k$. Clearly $E_0$ is just the centre of $E$ and $E_1$ is the ``anticommuting'' part of $E$. This natural structure of a $\mathbb{Z}_2$-graded algebra on $E$ makes the Grassmann algebra very important not only in Algebra but in various areas of Mathematics as well as in Theoretical Physics. We are not going to discuss the applications of the Grassmann algebra here; we point out to the book by Berezin \cite{berezin} for such applications. 

Kemer \cite{basekemer, basekemer2} developed the structure theory of the ideals of identities of associative algebras, also called T-ideals, in characteristic 0. This description led Kemer to the positive solution of the long standing and extremely important problem posed by W. Specht in 1950: Is every T-ideal in characteristic 0 finitely generated as a T-ideal? A key ingredient in Kemer's research was the following result: Every associative PI-algebra over a field of characteristic zero is PI-equivalent to the Grassmann envelope of a finite dimensional associative superalgebra. Here we recall that if $A=A_0\oplus A_1$ is a superalgebra then its Grassmann envelope is defined as $E(A) = A_0\otimes E_0\oplus A_1\otimes E_1$. 

With some abuse of notation we shall use the term superalgebra as synonymous to $\mathbb{Z}_2$-graded algebra. While in the associative case these two notions do coincide they do not if one studies nonassociative algebras. Thus a Lie (or Jordan) superalgebra is a $\mathbb{Z}_2$-graded algebra $A$ such that $E(A)$ is a Lie (or Jordan) algebra. Observe that $A$ need not be a Lie (or Jordan) algebra at all, and in all interesting cases it is not.

Group gradings on algebras and the corresponding graded polynomial identities have been extensively studied in PI theory during the last two or three decades. As we already mentioned the Grassmann algebra admits a natural grading by the group $\mathbb{Z}_{2}$; here we shall denote it by $E_{can}=E_{(0)}\oplus E_{(1)}$. The graded identities for the canonical grading on $E$ are well known, see for example \cite{GMZ}. In the last fifteen years a substantial number of papers has  presented results on gradings on the Grassmann algebra and their graded identities. However two strong hypotheses have always been done:

\begin{itemize}
\item The group $G$ of the grading on $E$ is finite. 
\item All generators $e_{1}$, $e_{2}$, \dots, $e_{n}$, \dots{} of $E$ are  homogeneous in the grading.
\end{itemize} 
We cite here the research developed by Anisimov \cite{anisimov1, anisimov2}, and by Di Vincenzo and da Silva \cite{silcod, disil}. 

In a previous paper we studied $\mathbb{Z}_2$-gradings on $E$ without assuming the homogeneity of the generators, see \cite{aagpk}. We proved that there exist very many nonhomogeneous such gradings on $E$. On the other hand it turns out that in all "typical" cases the gradings obtained are very similar to homogeneous ones.  

In this paper we shall dispense with the first of the above two conditions, and we shall study gradings on $E$ by the infinite cyclic group $\mathbb{Z}$. We point out that until now the only known $\mathbb{Z}$-grading on $E$ has been the natural one. It is given by $E^{can}= \oplus_{\substack{n\in\mathbb{Z}}} E^{(n)}$ where
\[
E^{(n)}=\begin{cases} 0,\text{ if }  n<0 \\ F, \text{ if } n=0 \\ span_{F}\{w\in B_{E}\mid  |supp(w)|=n \}, \text{ if } n\geq 1
\end{cases}.
\]
Gradings by $\mathbb{Z}$ have been studied in the context of polynomial identities. We recall that Vasilovsky \cite{vsl98, vsl99} described the $\mathbb{Z}$-grading and $\mathbb{Z}_{n}$-grading on matrix algebras in characteristic 0, and later on Azevedo \cite{ssa2, ssa1} obtained similar results over any infinite field. The $\mathbb{Z}$-graded identities (as well as the $\mathbb{Z}_2$-graded ones) for the Lie algebra $sl_2(F)$ were described in \cite{pksl2}. In all these cases the graded polynomial identities are essentially the same as in the "finite" case. In \cite{jfakpk} the natural $\mathbb{Z}$-grading on the Lie algebra $W_1$ of the derivations on the polynomial ring in one variable was studied. In the latter paper it was proved that the graded identities for $W_1$ do not admit any finite generating set. (We recall here that finding a generating set for the ordinary identities of $W_1$ is an important open problem in PI theory.) 

Let us mention that the study of graded algebras started with the natural grading by $\mathbb{Z}$ on the polynomial ring. Gradings by $\mathbb{Z}$ where only finitely many components are nonzero are important in various areas of Algebra: in the theory of finite dimensional Lie algebras, in the theory of Jordan algebras, pairs and triple systems, and so on. 

We return to the Grassmann algebras. 
When we pass to the quotient grading modulo $2\mathbb{Z}$ we  see that $E_{can}$ is induced by $E^{can}$. Hence we have the correspondence 
\[
 E^{can} \stackrel{\pmod{2\mathbb{Z}}}{\longmapsto} E_{can}.
\]
Di Vincenzo and da Silva in \cite{disil} described the $\mathbb{Z}_{2}$-gradings on the Grassmann algebra $E$ in characteristic zero. They assumed that the vector space $L$ is homogeneous. There exist three possible structures, namely:
\[
\|e_{i}\|_{k}=\begin{cases} 0,\text{ if }  i=1,\ldots,k\\ 1, \text{  otherwise} 
\end{cases},
\]	
\[
\|e_{i}\|_{k^\ast}=\begin{cases} 1,\text{ if }  i=1,\ldots,k\\ 0, \text{ otherwise } 
\end{cases},
\]	
and
\[
\|e_{i}\|_{\infty}=\begin{cases} 0,\text{ if }  i\text{ even }\\ 1, \text{ otherwise } 
\end{cases}.
\]
These three cases provide us with three superalgebras denoted respectively by $E_{k}$, $E_{\infty}$, and $E_{k^\ast}$. The $\mathbb{Z}_{2}$-graded polynomial identities of $E_{k}$, $E_{\infty}$, and $E_{k^\ast}$ were completely described in \cite{disil}. In a similar way in \cite{centroneP} the author described the polynomial identities of $E_{k}$, $E_{\infty}$ and $E_{k^\ast}$ over an infinite field of positive characteristic $p>2$, and in \cite{LFG}, the same was done when $F$ is a finite field.

Following the above discussion here we develop a rather general procedure for producing $\mathbb{Z}$-gradings on $E$ such that the vector space $L$ is homogeneous. In fact such a procedure can be used to obtain the gradings by an arbitrary abelian group $G$. Moreover the construction allows us to realize the superalgebras $E_{k}$, $E_{\infty}$ and $E_{k^\ast}$ as quotient gradings from appropriate and natural $\mathbb{Z}$-gradings on $E$.

In addition to the general construction we deal with three more particular types of $\mathbb{Z}$-gradings on $E$. Our interest in these three types of $\mathbb{Z}$-gradings derives from the fact that they yield, in a sense, the $\mathbb{Z}_2$-gradings considered by Di Vincenzo and da Silva in \cite{disil} when passing to the quotient of $\mathbb{Z}$ by $2\mathbb{Z}$. We describe the respective $\mathbb{Z}$-graded polynomial identities. First we study the situation where the base field is of characteristic zero; afterwards we extend our results to an infinite field of characteristic different from 2. 

\section{Preliminaries}
Let $F$ be a fixed infinite field of characteristic different from 2. Throughout this paper all vector spaces and algebras will be associative and with unity, and will be considered over $F$. We also fix an abelian group $G$. Whenever necessary we shall state explicitly any restrictions on $F$ and/or $G$ which we impose.

A $G$-grading on an associative algebra $A$ is a vector space decomposition $A=\oplus_{g\in G}A_{g}$ such that  $A_{g}A_{h}\subset A_{gh}$ for all $g$, $h\in G$. When $a\in A_{g}$ we say that $a$ is homogeneous and its degree is $\alpha(a)=g$. In some occasions we will want to use a subscript (or superscript) to the degree, in such cases we will use $\|a\|$ for the homogeneous degree of $a$, like $\|a\|_i$ or $\|a\|^i$. If $H$ is a subgroup of $G$ we define the quotient $G/H$-grading as $A=\oplus_{\overline{g}\in G/H}A_{\overline{g}}$ where $A_{\overline{g}}=\oplus_{h\in H}A_{gh}$.

Group gradings on the Grassmann algebra with groups $G$ other than $\mathbb{Z}_2$ were studied in the papers \cite{disilplamen, centroneG}. In these the authors describe homogeneous gradings on $E$ by cyclic groups of prime order and by finite abelian groups, respectively. 

Let $ X=\cup_{i\in\mathbb{Z}}X_{i}$ be the disjoint union of infinite countable sets of variables $X_{i}=\{x_{1}^{i},x_{2}^{i},\ldots\}$, $i\in\mathbb{Z}$. Assuming that for each $i\in\mathbb{Z}$ the elements of the sets $X_{i}$ are of $\mathbb{Z}$-degree $i$, the free associative algebra $F\langle X|\mathbb{Z}\rangle$ has a natural $\mathbb{Z}$-grading $\oplus_{i\in\mathbb{Z}} F^{i}$. Here $F_{i}$ is the vector subspace of $F\langle X|\mathbb{Z}\rangle$ spanned by all monomials of $\mathbb{Z}$-degree $i$. It is immediate to see that this defines indeed a $\mathbb{Z}$-grading on $F\langle X|\mathbb{Z}\rangle$. It is also easy to see that $F\langle X|\mathbb{Z}\rangle$ is free in the following sense. Given a $\mathbb{Z}$-graded algebra $A=\oplus_{i\in\mathbb{Z}} A_i$ and a map $f\colon X\to A$ such that $f(X_i)\subseteq A_i$ for every $i$ then $f$ can be uniquely extended to an algebra homomorphism $\varphi\colon F\langle X|\mathbb{Z}\rangle\to A$ which respects the $\mathbb{Z}$-gradings on these algebras. (Such homomorphisms are called $\mathbb{Z}$-graded ones.)

A polynomial $f(x_{j_1}^{l_1},\ldots, x_{j_r}^{l_r})$ is called a $\mathbb{Z}$-graded polynomial identity (PI) of the $\mathbb{Z}$-graded algebra $A$ if $f(a_{1},\ldots, a_{r})=0$ for all $a_{l}\in A_{i_l}$. The set $T_{\mathbb{Z}}(A)$ of all $\mathbb{Z}$-graded polynomial identities is an ideal of $F\langle X|\mathbb{Z}\rangle$. It is also invariant under $\mathbb{Z}$-graded endomorphisms of $F\langle X|\mathbb{Z}\rangle$. Such ideals are called $T_{\mathbb{Z}}$-ideals. As in the case of ordinary polynomial identities one proves that an ideal $I$ in $F\langle X|\mathbb{Z}\rangle$ is a $T_{\mathbb{Z}}$-ideal if and only if it coincides with the ideal of all $\mathbb{Z}$-graded identities of some $\mathbb{Z}$-graded algebra $A$. It is well known that studying ordinary polynomial identities in characteristic 0, one may consider the multilinear ones. The analogous fact holds for graded identities as well. Thus if $A$ is a $\mathbb{Z}$-graded algebra over the field $F$ of characteristic 0, the ideal $T_{\mathbb{Z}}(A)$ of all $\mathbb{Z}$-graded identities of $A$ is generated as a $T_{\mathbb{Z}}$-ideal by its multilinear polynomials. In the more general case of an infinite field one has to take into account the multihomogeneous polynomials instead of the multilinear ones. 

In the case of ordinary identities of unitary algebras one can reduce further the set of polynomial identities that determine a given T-ideal. Such a reduction is given by the proper polynomials. 

Let $L(X)$ be the free Lie algebra freely generated over $F$ by the set $X$. It is well known that its universal enveloping algebra is the free associative algebra $F\langle X\rangle$ (Witt's Theorem). Take an ordered basis of $L(X)$ such that the variables from $X$ precede the longer commutators. By the Poincar\'e--Birkhoff--Witt theorem $F\langle X\rangle$ has a basis as a vector space consisting of elements 
\[
x_{i_1} \cdots x_{i_m} u_{j_1} \cdots u_{j_n}
\]
where $i_1\le \cdots\le i_m$, all $u_{j_k}$ are commutators of degree $\ge 2$, and moreover $j_1\le \cdots\le j_n$, $m$, $n\ge 0$. Let $A$ be a unitary associative algebra and assume $f\in T(A)$ is an identity for $A$. A standard argument, see for example \cite[Section 4.3]{bookdrensky} shows that $f$ is equivalent as an identity to a finite collection of polynomials which are linear combinations of products of commutators, as long as the field $F$ is infinite. Such polynomials (linear combinations of products of commutators) are called \textsl{proper polynomials}. In particular, if $F$ is of characteristic 0, and $1\in A$ then $T(A)$ is generated as a T-ideal by its proper multilinear polynomials. The idea of proof of this fact consists in substituting the variables $x$ in $f$ by $x+1$, and then taking the homogeneous components. As the field is infinite, these homogeneous components are identities for $A$ as well. But if one substitutes 1 inside a commutator then the corresponding summand vanishes. Thus one eliminates the variables outside of commutators. 

The above argument does not apply straightforward to graded identities. But a slight modification of it shows that one can still make use of it, namely by considering the so-called 0-proper polynomials. These are polynomials where each variable of homogeneous degree 0 appears in commutators only, while the remaining variables may appear inside or outside the commutators. Here we draw the readers' attention that the element 1 of the algebra lies in the homogeneous component of degree 0. The 0-proper polynomials have been used by various authors, see for example \cite[Proposition 1.2]{opa}. 

We make use of the 0-proper polynomials in the description of the graded identities of the canonical $\mathbb{Z}$-grading on $E$, and in an essential way, in Section~\ref{sectEk} where we describe the identities of the grading $E^k$ which is the analogue to $E_k$.

\section{The natural $\mathbb{Z}$-grading on $E$}

Recall that the natural $\mathbb{Z}$-grading on $E$ is defined as  $E^{can}= \oplus_{\substack{n\in\mathbb{Z}}} E^{(n)}$ where
\[
E^{(n)}=\begin{cases} 0,\text{ if }  n<0 \\ F, \text{ if } n=0 \\ span_{F}\{w\in B_{E}\mid |supp(w)|=n \}, \text{ if } n\geq 1
\end{cases}.
\]
We recall that if $F\langle X|\mathbb{Z}\rangle$ is the free associative $\mathbb{Z}$-graded algebra we denote the degree of the variable $x$  by $\alpha (x)$.

We describe now the generators of the $\mathbb{Z}$-graded polynomial identities for $E^{can}$.

\begin{lema} 
	The following polynomials are $\mathbb{Z}$-graded identities of $E^{can}= \oplus_{\substack{n\in\mathbb{Z}}} E^{(n)}$: 
	\begin{itemize}
		\item $x$, if $\alpha(x)<0$;
		\item $[x_{1},x_{2}]$, if $\alpha(x_{1})$ or $\alpha(x_{2})$  is an even integer;
		\item $x_{1}x_{2} + x_{2}x_{1}$, if $\alpha(x_{1})$ and $\alpha(x_{2})$ are odd integers. 
		\label{eq:lema 1}
			\end{itemize}
\end{lema}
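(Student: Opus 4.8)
The plan is to derive all three families of identities from a single \emph{super-commutation rule} in $E$: for any two basis monomials $w,w'\in B_E$ with $|supp(w)|=k$ and $|supp(w')|=\ell$ one has $ww'=(-1)^{k\ell}w'w$. Once this rule is available, each of the three cases is a one-line specialization obtained by reading off the parity of $k\ell$.

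First I would dispose of the identity $x$ with $\alpha(x)<0$, which is immediate from the definition of $E^{can}$: any admissible substitution for $x$ must be a homogeneous element of degree $\alpha(x)$, hence lies in $E^{(\alpha(x))}=0$, so $x$ evaluates to $0$ under every such substitution.

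Next I would prove the super-commutation rule for basis monomials. Write $w=e_{i_1}\cdots e_{i_k}$ and $w'=e_{j_1}\cdots e_{j_\ell}$. If $supp(w)\cap supp(w')\neq\varnothing$, then some generator is repeated in both $ww'$ and $w'w$; since $e_i^2=0$, both products vanish and the rule holds trivially. If the supports are disjoint, then passing $w'$ to the left of $w$ amounts to carrying each of the $\ell$ generators of $w'$ past each of the $k$ generators of $w$, and each transposition contributes a factor $-1$ because $e_ie_j=-e_je_i$; the total sign is therefore $(-1)^{k\ell}$. Extending by bilinearity, any homogeneous elements $a\in E^{(m)}$ and $b\in E^{(n)}$ with $m,n\ge 1$ satisfy $ab=(-1)^{mn}ba$, while when $m=0$ or $n=0$ the corresponding element is a scalar and commutes with everything.

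With the rule in hand the remaining two cases follow by inspecting the parity of $mn$. If $\alpha(x_1)$ or $\alpha(x_2)$ is even, then $(-1)^{mn}=1$, whence $ab=ba$ and $[x_1,x_2]$ vanishes on all homogeneous substitutions; if both $\alpha(x_1)$ and $\alpha(x_2)$ are odd, then $(-1)^{mn}=-1$, whence $ab=-ba$ and $x_1x_2+x_2x_1$ vanishes. Negative degrees need no separate treatment: there the relevant component is $0$ and the identity holds trivially, so it suffices to consider nonnegative degrees. The argument is essentially routine; the only place demanding care is the sign bookkeeping in the super-commutation rule, and in particular the observation that the overlapping-support case holds because \emph{both} products are zero, not by any cancellation of signs.
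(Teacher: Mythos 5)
Your proof is correct, and it supplies exactly the routine verification the paper leaves out: the published proof of this lemma reads, in full, ``The proof is trivial and hence omitted.'' Your super-commutation rule $ww'=(-1)^{k\ell}w'w$ for basis monomials (with the correct observation that the overlapping-support case holds because both products vanish via $e_i^2=0$, not by sign cancellation) is the standard argument the authors evidently had in mind, so there is nothing to correct or compare.
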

\begin{proof}
The proof is trivial and hence omitted.
\end{proof}

\begin{proposition}
Let the field $F$ be of characteristic 0. 
	The $T_{\mathbb{Z}}$-ideal of the $\mathbb{Z}$-graded polynomial identities  of $E^{can}=\oplus_{n\in\mathbb{Z}} E^{(n)}$ is generated by the graded polynomials  from Lemma~\ref{eq:lema 1}. 
\end{proposition}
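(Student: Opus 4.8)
The plan is to prove that the $T_{\mathbb{Z}}$-ideal $I$ generated by the three families of Lemma~\ref{eq:lema 1} equals $T_{\mathbb{Z}}(E^{can})$. The inclusion $I\subseteq T_{\mathbb{Z}}(E^{can})$ is precisely the content of Lemma~\ref{eq:lema 1}, so only the reverse inclusion requires work. Since $\operatorname{char} F=0$ and $E^{can}$ is unitary, $T_{\mathbb{Z}}(E^{can})$ is generated as a $T_{\mathbb{Z}}$-ideal by its multilinear polynomials (and one may further pass to $0$-proper ones), so it is enough to show that every multilinear $\mathbb{Z}$-graded identity lies in $I$. I would fix a degree tuple $\bar\imath=(i_1,\dots,i_n)$ and consider the space $P^{\bar\imath}$ of multilinear polynomials in the variables $x_1^{i_1},\dots,x_n^{i_n}$, which is spanned by the $n!$ monomials indexed by permutations.

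First I would establish the upper bound $\dim\bigl(P^{\bar\imath}/(I\cap P^{\bar\imath})\bigr)\le 1$. If some $i_k<0$, then $x_k^{i_k}\in I$ by the first family, so every monomial of $P^{\bar\imath}$ lies in $I$ and the quotient is zero. Otherwise all degrees are nonnegative, and I reorder an arbitrary monomial into a fixed canonical ordering of its variables by adjacent transpositions: since $I$ is a two-sided ideal, one such swap changes the monomial only by an element of $I$, with no sign change when at least one of the two swapped variables has even degree (second family) and with a factor $-1$ when both have odd degree (third family). Hence every monomial is congruent modulo $I$ to $\pm m_0$, where $m_0$ is the canonically ordered monomial; the accumulated sign is the Koszul sign, namely the sign of the permutation induced on the odd-degree variables, which depends only on the permutation and thus makes the canonical form well defined. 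Therefore $P^{\bar\imath}/(I\cap P^{\bar\imath})$ is spanned by the single class of $m_0$.

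Next I would prove the matching non-degeneracy: when all $i_k\ge 0$, the monomial $m_0$ is \emph{not} a graded identity of $E^{can}$. For this I substitute $1\in F=E^{(0)}$ for every degree-$0$ variable and, for each variable of degree $d\ge 1$, a product $e_{s+1}e_{s+2}\cdots e_{s+d}$ of $d$ consecutive generators, choosing the index blocks pairwise disjoint. The resulting value equals, up to sign, a single basis element of $E$ whose generators are all distinct, hence it is nonzero. Consequently, if $f\in T_{\mathbb{Z}}(E^{can})\cap P^{\bar\imath}$, then by the upper bound $f\equiv c\,m_0\pmod{I}$ for some $c\in F$; since $I\subseteq T_{\mathbb{Z}}(E^{can})$ we get $c\,m_0\in T_{\mathbb{Z}}(E^{can})$, and non-degeneracy forces $c=0$, whence $f\in I$.

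I expect the main obstacle to lie in the reordering step: one must check that the sign arising when permuting a product of central (even-degree) and anticommuting (odd-degree) homogeneous elements is independent of the chosen chain of adjacent transpositions, i.e.\ the consistency of the Koszul sign, so that the canonical monomial $m_0$ is genuinely well defined. The non-degeneracy step is where the structure of $E$ enters and must be handled with care as well: the supports of the substituted elements have to be chosen pairwise disjoint, otherwise a repeated generator $e_i$ would make the product vanish through $e_i^2=0$. With both points settled, the characteristic-zero reduction to multilinear identities completes the argument.
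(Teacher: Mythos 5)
Your proof is correct and takes essentially the same route as the paper: your dimension bound $\dim\bigl(P^{\bar\imath}/(I\cap P^{\bar\imath})\bigr)\le 1$ is exactly the paper's reduction $f\equiv\beta\,x_1\cdots x_t x_{t+1}\cdots x_{t+s}\pmod{I}$, and your non-degeneracy step is the paper's same graded substitution by products of consecutive generators with pairwise disjoint supports. The Koszul-sign consistency you flag as the main obstacle is in fact harmless even without verifying it directly: each monomial reduces to $\pm m_0$ modulo $I$ along any chosen chain of transpositions, so $f\equiv c\,m_0\pmod{I}$ for some scalar $c$ regardless, and the nonvanishing evaluation then forces $c=0$.
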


\begin{proof}
	Let $I$ be the $T_{\mathbb{Z}}$-ideal generated by the graded identities from 
Lemma~\ref{eq:lema 1}. The inclusion $I\subset T_{\mathbb{Z}}(E^{can})$ is immediate by Lemma~\ref{eq:lema 1}. Thus we have to prove only the opposite inclusion. 
	
Let us take a multilinear polynomial $f(x_{1},\ldots, x_{t}, x_{t+1},\ldots, x_{t+s})\notin I$ where $\alpha(x_{i})$ is an even integer whenever $i=1$, \dots, $t$, and $\alpha(x_{i})$ is odd for $i=t+1$, \dots, $t+s$. Since $f$ is not in $I$ we assume that each  variable is of degree $\ge 0$. Modulo the graded identities from Lemma \ref{eq:lema 1} we obtain that
	\[
	f\equiv\beta x_{1}\ldots x_{t} x_{t+1}\ldots x_{t+s} \pmod {I},\text{ with } \beta\neq 0.
	\]
Now we consider the following $\mathbb{Z}$-graded evaluation
	\begin{align*}
x_{1} & \mapsto  e_{1}\ldots e_{\alpha(x_{1})}, \\
x_{2} & \mapsto  e_{\alpha(x_{1})+1}\ldots e_{\alpha(x_{1})+\alpha(x_{2})}, \\
	&\vdots \\
x_{t} & \mapsto  e_{\alpha(x_{1})+ \cdots+ \alpha(x_{t-1})+1}\ldots e_{\alpha(x_{1})+\alpha(x_{2})+ \cdots + \alpha(x_{t-1})+\alpha(x_{t})}.
\end{align*}
Putting $p=\alpha(x_{1})+\alpha(x_{2})+\cdots +\alpha(x_{t})$, we define further
\begin{align*}
x_{t+1} & \mapsto e_{p+1}\ldots e_{p+\alpha(x_{t+1})},\\ 
x_{t+2} & \mapsto e_{p+\alpha(x_{t+1})+1}\ldots e_{p+\alpha(x_{t+1})+\alpha(x_{t+2})}, \\
	&\vdots \\
x_{t+s} & \mapsto e_{p+\alpha(x_{t+1})+ \cdots + \alpha(x_{t+s-1})+1}\ldots e_{p+\alpha(x_{t+1})+ \cdots + \alpha(x_{t+s-1}) +\alpha(x_{t+s})}.
\end{align*}
This evaluation is clearly $\mathbb{Z}$-graded. The products of the generators $e_i$ involved in it have disjoint supports, and therefore their product will not vanish on $E$.  
\end{proof}

In the following theorem we deal with the case when $F$ is an infinite field of characteristic $p>2$.

\begin{theorem}
Over an infinite field $F$ of characteristic $p>2$, all $\mathbb{Z}$-graded polynomial identities of $E^{can}$ are consequences of the graded identities from Lemma~\ref{eq:lema 1} together with the identity $x^p$ whenever $\alpha(x)\ge 1$ is an even integer.
\end{theorem}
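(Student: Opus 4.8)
The plan is to argue along the lines of the characteristic-zero Proposition, the two new features being that over an infinite field one must reduce to multihomogeneous (rather than multilinear) polynomials, and that the even-degree variables may now occur with intermediate multiplicities, which is exactly what the extra identity $x^p$ controls. First I would check the easy inclusion, that $x^p$ is a $\mathbb{Z}$-graded identity of $E^{can}$ whenever $\alpha(x)=n\geq 2$ is even. Writing an arbitrary homogeneous element of $E^{(n)}$ as $a=\sum_{|I|=n} c_I\, e_I$ (with $e_I=e_{i_1}\cdots e_{i_n}$ and $I=\{i_1<\cdots<i_n\}$), one expands $a^p=\sum c_{I_1}\cdots c_{I_p}\, e_{I_1}\cdots e_{I_p}$; a summand survives only when the $I_j$ are pairwise disjoint, in which case, the blocks having even length, they commute and the product equals $e_{I_1\cup\cdots\cup I_p}$ independently of their order. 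Each such basis element therefore occurs with the multiplicity $p!$ of its orderings, and $p!\equiv 0\pmod p$, so $a^p=0$. Hence the $T_{\mathbb{Z}}$-ideal $I$ generated by Lemma~\ref{eq:lema 1} together with these $x^p$ is contained in $T_{\mathbb{Z}}(E^{can})$, and it remains to prove the reverse inclusion.

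Since $F$ is infinite it suffices to treat a multihomogeneous polynomial $f\notin I$ and to produce a $\mathbb{Z}$-graded evaluation on $E^{can}$ not annihilating it. Working modulo the identities of Lemma~\ref{eq:lema 1}, every variable of negative degree may be assumed absent, the even-degree variables become central, and the odd-degree variables pairwise anticommute; in particular $2z^2\equiv 0$ forces $z^2\equiv 0$ for each odd variable $z$. Because $f$ is multihomogeneous, if some odd variable occurred with multidegree $\ge 2$, or some even variable of degree $\ge 2$ occurred with multidegree $\ge p$ (using $x^p$), then every monomial of $f$ would lie in $I$, contradicting $f\notin I$. Sorting each monomial into the canonical order therefore yields
\[
 f\equiv \beta\, y_1^{a_1}\cdots y_t^{a_t}\, z_1\cdots z_s \pmod I,
\]
where the $y_i$ are the variables of even degree (the degree-$0$ ones carrying unrestricted exponents, those of degree $\ge 2$ satisfying $a_i\le p-1$), the $z_j$ are the distinct odd variables, and $\beta\in F$. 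Since $f-\beta\,y_1^{a_1}\cdots z_s\in I$, the hypothesis $f\notin I$ gives $\beta\neq 0$.

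Finally I would exhibit the evaluation. Choose pairwise disjoint finite subsets of the index set, one block $J_j$ of size $\alpha(z_j)$ for each odd variable, and $a_i$ blocks $I_{i,1},\dots,I_{i,a_i}$ each of size $\alpha(y_i)$ for each even variable of degree $\ge 2$; send each degree-$0$ variable to $1$, each odd $z_j$ to $e_{J_j}$, and each even $y_i$ of degree $\ge 2$ to $\sum_{k=1}^{a_i} e_{I_{i,k}}$. The assignment respects the grading. The point is that, the blocks being disjoint and of even length,
\[
 \Big(\sum_{k=1}^{a_i} e_{I_{i,k}}\Big)^{a_i}=a_i!\; e_{I_{i,1}\cup\cdots\cup I_{i,a_i}},
\]
which is nonzero because $a_i\le p-1$ makes $a_i!$ invertible in $F$. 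All the resulting basis elements have pairwise disjoint supports, so their product is $\pm$ a single element of $B_E$, and $f$ evaluates to $\pm\beta\big(\prod_i a_i!\big)\, e_{(\mathrm{total\ support})}\neq 0$. Thus $f\notin T_{\mathbb{Z}}(E^{can})$, completing the argument. The one genuinely new difficulty compared with the characteristic-zero case is this last step: a single basis monomial $e_I$ of even length squares to zero, so an even variable raised to a power $\ge 2$ must be evaluated on a \emph{sum} of disjoint monomials, and it is exactly the bound $a_i\le p-1$ furnished by $x^p$ that keeps the factor $a_i!$ from vanishing.
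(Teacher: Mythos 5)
Your proof is correct and follows essentially the same route as the paper: reduce a multihomogeneous identity modulo the listed identities to a single monomial $\beta\, y_1^{a_1}\cdots y_t^{a_t} z_1\cdots z_s$ with the odd variables multilinear and the positive even exponents bounded by $p-1$, then show $\beta\neq 0$ forces non-vanishing. The only differences are cosmetic: the paper invokes 0-proper polynomials to dispose of the degree-0 variables while you keep them and evaluate at $1$, and you spell out two steps the paper leaves implicit --- the direct verification that $x^p$ is a graded identity of $E^{can}$ (the paper cites the known identity $x^p$ of the non-unitary Grassmann algebra) and the explicit non-vanishing evaluation of each even variable on a sum of $a_i$ disjoint even blocks, producing the invertible factor $a_i!$.
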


\begin{proof}
Let $f(x_{1},\ldots,x_{n})$ be a 0-proper multihomogeneous identity of $E^{can}$. 
As $x$ is a graded identity whenever $\alpha(x)<0$ we can assume that  all variables $x$ appearing in $f$ are of degree $\alpha(x)>0$. The identity $x_{1}x_{2} + x_{2}x_{1}=0$ when $x_{1}$ and $x_{2}$ are of odd degrees  gives us $x^2=0$ if $\alpha(x)$ is an odd integer.  Combining this with the fact that $[x,y]$ is a graded identity when $\alpha(x)$ is an even integer,  we can suppose that all odd variables in $f$ are multilinear. On the other hand the identity $x^{p}=0$ when $\alpha(x)$ is even allows us to assume that 
\[
f(x_{1},\ldots,x_{n})=\beta x_{1}^{l_1}\cdots x_{v}^{l_v}x_{v+1}\cdots x_{n}
\]
where $\alpha(x_{i})$ is even if $i=1$, \dots, $v$, $\alpha(x_{i})$ is odd if $i=v+1$, \dots, $n$, and $l_{i}<p$ for every $i=1$, \dots, $v$.
Therefore $f$ vanishes on $E^{can}$ if and only if $\beta=0$.
\end{proof}

It is well known that the non-unitary Grassmann algebra in characteristic $p>2$ satisfies the identity $x^p$, see for example \cite[Lemma 12]{agpk} and the references therein. This implies that $a^p=0$ for every homogeneous element $a\in E^{can}$ of degree $\ge 1$. On the other hand if $\alpha(a)$ is odd then by Lemma~\ref{eq:lema 1} we have $2a^2=0$ and $a^2=0$ thus $a^p=0$ as well. Hence one has to impose the identity $x^p$ only for homogeneous elements of even degree. 
	
\section{$\mathbb{Z}$-gradings on $E$: A general construction}

In this section we provide the general method of constructing $\mathbb{Z}$-gradings on $E$. In fact the method works for every abelian group $G$ instead of $\mathbb{Z}$. 

We start with defining three types of $\mathbb{Z}$-gradings on the Grassmann algebra $E$; these are the natural analogues of the ones considered in \cite{disil}. To this end we consider the following attribution of degrees on its generators:
\begin{align*}
\|e_{i}\|^{k} & =\begin{cases} 0,\text{ if }  i=1,\ldots,k\\ 1, \text{ otherwise } 
\end{cases},\\	
\|e_{i}\|^{k^\ast} & =\begin{cases} 1,\text{ if }  i=1,\ldots,k\\ 0, \text{ otherwise } 
\end{cases},\\
\|e_{i}\|^{\infty} & =\begin{cases} 0,\text{ for }  i\text{ even }\\ 1, \text{ for i odd } 
\end{cases}.
\end{align*}	
Then we induce the $\mathbb{Z}$-grading on $E$ putting 
\[
\|e_{j_{1}}\cdots e_{j_{n}}\| =\|e_{j_{1}}\|+\cdots + \|e_{j_{n}}\|,
\]
and extend it to $E$ by linearity.

We denote by $E^{k}$, $E^{k^\ast}$ and $E^{\infty}$ the three types of $\mathbb{Z}$-graded algebras above. Observe that the natural $\mathbb{Z}$-graded algebra $E^{can}$ can be obtained from the previous attribution of degrees, it is the case when $\|e_{i}\|=1$ for all $i$, that is it is the grading $E^0$. 

\begin{obs} 
The superalgebras $E_{k}$, $E_{k^\ast}$, and $E_{\infty}$ can be obtained respectively from $E^{k}$, $E^{k^\ast}$ and $E^{\infty}$ in a natural manner. Indeed, passing to the quotient grading by the subgroup $2\mathbb{Z}$ we note that
\begin{align*}
 E^{k} & \stackrel{\pmod{2\mathbb{Z}}}{\longmapsto} E_{k},\\
 E^{k^\ast} & \stackrel{\pmod{2\mathbb{Z}}}{\longmapsto} E_{k^\ast}, \\
 E^{\infty} & \stackrel{\pmod{2\mathbb{Z}}}{\longmapsto} E_{\infty}.
 \end{align*}
Clearly one can find infinitely many $\mathbb{Z}$-gradings on $E$ that produce the same $\mathbb{Z}_2$-grading. One can choose some ``natural'' $\mathbb{Z}$-grading producing a given $\mathbb{Z}_2$-grading; the same holds word by word if we consider gradings on $E$ by a cyclic group $\mathbb{Z}_m$ for every $m$. 
\end{obs}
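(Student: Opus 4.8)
The plan is to reduce the whole statement to the observation that both the $\mathbb{Z}$-grading $E^{k}$ and the target $\mathbb{Z}_2$-grading $E_{k}$ are induced from degree assignments on the generators $e_i$ by one and the same additive rule, so that passing to the quotient modulo $2\mathbb{Z}$ amounts to composing a degree function with the canonical projection $\pi\colon\mathbb{Z}\to\mathbb{Z}/2\mathbb{Z}$. First I would recall the definition of the quotient grading: for the subgroup $H=2\mathbb{Z}$ the component of $E^{k}$ in degree $\overline{g}\in\mathbb{Z}/2\mathbb{Z}$ is $\bigoplus_{h\in 2\mathbb{Z}}(E^{k})_{g+h}$, that is, the span of all basis monomials whose $\mathbb{Z}$-degree is congruent to $g$ modulo $2$. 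Hence it suffices to compare, monomial by monomial, the parity of the $\mathbb{Z}$-degree in $E^{k}$ with the $\mathbb{Z}_2$-degree in $E_{k}$.

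Second, for a basis monomial $w=e_{j_1}\cdots e_{j_n}\in B_E$ I would write its $\mathbb{Z}$-degree as $\|w\|^{k}=\sum_{\ell=1}^{n}\|e_{j_\ell}\|^{k}$ and its $\mathbb{Z}_2$-degree as $\|w\|_{k}=\sum_{\ell=1}^{n}\|e_{j_\ell}\|_{k}$. On the generators the value $\|e_i\|^{k}\in\{0,1\}$ reduces modulo $2$ to exactly the prescribed $\mathbb{Z}_2$-degree $\|e_i\|_{k}$, i.e.\ $\pi(\|e_i\|^{k})=\|e_i\|_{k}$. Since $\pi$ is a group homomorphism this propagates to every monomial: $\pi(\|w\|^{k})=\pi\bigl(\sum_\ell\|e_{j_\ell}\|^{k}\bigr)=\sum_\ell\pi(\|e_{j_\ell}\|^{k})=\sum_\ell\|e_{j_\ell}\|_{k}=\|w\|_{k}$. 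Therefore a basis monomial lies in the degree-$\overline{g}$ component of the quotient of $E^{k}$ precisely when it lies in the degree-$g$ component of $E_{k}$, which gives the equality of gradings $E^{k}\pmod{2\mathbb{Z}}=E_{k}$. The arguments for $E^{k^\ast}\longmapsto E_{k^\ast}$ and $E^{\infty}\longmapsto E_{\infty}$ are identical, since in each case the generator degrees of the $\mathbb{Z}$-grading reduce modulo $2$ to the corresponding $\mathbb{Z}_2$-degrees of \cite{disil}.

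For the two remaining assertions: the ``infinitely many'' claim follows because any assignment $\|e_i\|\in\mathbb{Z}$ with $\|e_i\|\equiv\|e_i\|_{k}\pmod 2$ induces, via the same summation rule, a $\mathbb{Z}$-grading whose quotient by $2\mathbb{Z}$ is again $E_{k}$; as each generator admits infinitely many admissible integer degrees (for instance $\|e_i\|_{k}$, $\|e_i\|_{k}+2$, $\|e_i\|_{k}+4,\dots$), there are infinitely many such liftings. Finally, replacing $\pi$ by the projection $\mathbb{Z}\to\mathbb{Z}/m\mathbb{Z}$ and repeating the computation verbatim yields the analogous statement for an arbitrary cyclic group $\mathbb{Z}_m$.

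I do not expect a genuine obstacle here; the only point that requires care is the bookkeeping in the first step, namely checking that the quotient component $\bigoplus_{h\in 2\mathbb{Z}}(E^{k})_{g+h}$ really is spanned by the monomials of the correct parity and that the multiplicativity of the degree is compatible with $\pi$. Both facts are immediate from the definitions, so the statement is in essence a direct verification.
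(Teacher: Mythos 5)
Your proposal is correct and follows exactly the route the paper intends: the paper states this as a remark without proof, the claim being the immediate verification that the generator degrees of $E^{k}$, $E^{k^\ast}$, $E^{\infty}$ reduce modulo $2$ to those of $E_{k}$, $E_{k^\ast}$, $E_{\infty}$, so that under the paper's definition of quotient grading ($A_{\overline{g}}=\oplus_{h\in 2\mathbb{Z}}A_{g+h}$) the two gradings agree monomial by monomial. Your lifting argument for the ``infinitely many'' claim and the verbatim extension to $\mathbb{Z}_m$ likewise match the paper's implicit reasoning via its general construction of gradings from degree assignments on the $e_i$.
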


\medskip

Now we present a general method for constructing $\mathbb{Z}$-gradings on the algebra $E$. 

Let $l\in\mathbb{N}$ be a positive integer and consider a list $(n_{1},\ldots, n_{l})$ where $n_{1}<\cdots < n_{l}$ and each $n_{j}$ belongs to $\mathbb{Z}$. We write the vector space $L$ as 
\[
L=L_{n_1}^{v_1}\oplus L_{n_2}^{v_2}\oplus\cdots\oplus L_{n_l}^{v_l}
\]
where we assume each $L_{n_j}^{v_j}\neq 0$. Suppose $\dim L_{n_j}^{v_j}=v_{j}$ where $v_{j}\in\mathbb{N}$, or $v_{j}=\infty$. Up to a change of the basis of the vector space $L$, we can assume that the generators $e_i$ of $E$ satisfy $e_i\in \cup_{j=1}^l L_{n_j}^{v_j}$. In other words we split the basis $e_1$, $e_2$, \dots{} of $E$ into $l$ disjoint sets (some of these may be finite). Denote by $L_{n_j}^{v_j}$ the span of the $j$-th set, and attribute homogeneous degree $n_j$ to the elements of the vector space $L_{n_j}^{v_j}$. 

Thus we induce a $\mathbb{Z}$-grading on $E$ by using the lists $(n_{1},\ldots, n_{l})$ and $(v_{1},\ldots, v_{l})$. We define
\[
\|e_{k}\|=n_{j}\quad \mbox{\textrm{ if and only if }}\quad  e_{k}\in L_{n_j}^{v_j}.
\]
For every product $e_{k_{1}}e_{k_{2}}\cdots e_{k_{s}}$ belonging to $E$ we define  
\[
\|e_{k_1}e_{k_2}\cdots e_{k_s}\|=\|e_{k_1}\|+\|e_{k_2}\|+\cdots +\|e_{k_s}\|.
\]
Hence the two lists $(n_{1},\ldots, n_{l})$ and $(v_{1},\ldots, v_{l})$ produce a $\mathbb{Z}$-grading on $E$. We shall denote this grading by $E_{(n_{1},\ldots, n_{l})}^{(v_{1},\ldots, v_{l})}$. 

\begin{obs}
Observe that $E^{can}$, $E^{k^\ast}, E^{\infty}$ and $E^{k}$ are obtained from the lists
\begin{itemize}
\item $(1)$ and $(\infty)$,
\item $(0,1)$ and $(\infty, k)$,
\item $(0,1)$ and $(\infty,\infty)$,
\item $(0,1)$ and $(k, \infty)$,
\end{itemize}
\noindent respectively. Hence the procedure described above is indeed a  general one.
\end{obs}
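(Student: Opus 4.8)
The plan is to reduce the statement to a short bookkeeping check that rests on a single structural observation: a $\mathbb{Z}$-grading on $E$ in which the space $L$ is homogeneous is \emph{completely} determined by the degrees assigned to the generators $e_1,e_2,\ldots$. Indeed, $E$ is generated as an algebra by $L$, and the grading is required to satisfy $\|e_{k_1}\cdots e_{k_s}\|=\|e_{k_1}\|+\cdots+\|e_{k_s}\|$; hence once we know $\|e_i\|$ for every $i$, the degree of every basis monomial of $E$ is forced. So to verify that the general construction reproduces a given homogeneous grading it suffices to compare, degree by degree, the number of generators carrying each degree. The construction built from the lists $(n_1,\ldots,n_l)$ and $(v_1,\ldots,v_l)$ assigns exactly the degree $n_j$ to the $v_j$ generators spanning $L_{n_j}^{v_j}$, so its defining data is precisely the list of pairs $(n_j,v_j)$: the distinct degrees that actually occur together with their multiplicities.

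With this in hand I would simply read off these data in each of the four cases. For $E^{can}=E^0$ every generator has degree $1$, so a single degree $n_1=1$ occurs, carried by infinitely many generators, giving the lists $(1)$ and $(\infty)$. For $E^{k^\ast}$ the attribution $\|e_i\|^{k^\ast}$ gives degree $1$ to the $k$ generators $e_1,\ldots,e_k$ and degree $0$ to the remaining (infinitely many) ones; ordering the occurring degrees increasingly as $0<1$ yields $(n_1,n_2)=(0,1)$ with multiplicities $(v_1,v_2)=(\infty,k)$. For $E^{\infty}$ the even-indexed generators (degree $0$) and the odd-indexed ones (degree $1$) both form infinite sets, giving $(0,1)$ and $(\infty,\infty)$. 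For $E^{k}$ degree $0$ is carried by the $k$ generators $e_1,\ldots,e_k$ and degree $1$ by the infinitely many others, giving $(0,1)$ and $(k,\infty)$. In every case the blocks are nonzero and each $v_j$ is a positive integer or $\infty$, so the data are admissible for the construction.

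The one point that must be argued rather than merely read off is that these are genuine equalities of gradings and not just a coincidence of numerical data; this is the only real content, and hence the mild obstacle. The subtlety is that the specific gradings $E^{k^\ast}$, $E^{\infty}$, $E^{k}$ pin down \emph{which} indices $i$ receive which degree (for instance $e_1,\ldots,e_k$ are exactly the degree-$1$ generators of $E^{k^\ast}$), whereas the general recipe prescribes only how many generators fall into each block, fixing the assignment of indices ``up to a change of the basis of $L$''. The resolution is exactly the clause already built into the construction: any bijection between the $j$-th prescribed block and the actual set of degree-$n_j$ generators is realized by a permutation of the basis $\{e_i\}$, and such a permutation extends to a $\mathbb{Z}$-graded automorphism of $E$ because it preserves the degree of every generator and the grading of $E$ is multiplicative. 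Thus each grading $E_{(n_1,\ldots,n_l)}^{(v_1,\ldots,v_l)}$ produced from the list above coincides, after this harmless relabeling, with the corresponding grading of Di Vincenzo and da Silva. Once this is noted the four verifications are immediate, and the closing assertion that the procedure is general follows from the first-paragraph observation, which shows that \emph{every} homogeneous $\mathbb{Z}$-grading of $E$ with finitely many distinct degrees arises in precisely this way.
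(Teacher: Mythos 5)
Your verification is correct and matches the paper exactly: the paper states this as a remark without proof, and the intended justification is precisely your reading-off of the pairs (degree, multiplicity) from the attributions $\|e_i\|^{can}$, $\|e_i\|^{k^\ast}$, $\|e_i\|^{\infty}$, $\|e_i\|^{k}$. Your extra paragraph on relabeling is sound but not even needed, since the construction permits an \emph{arbitrary} partition of the basis $\{e_i\}$ into the $l$ blocks (the ``up to a change of the basis'' clause), so each of the four gradings is literally an instance of it with the evident choice of blocks.
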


\begin{obs}
	The procedure just described is more general; it can be used to construct $G$-gradings on $E$, for every abelian group $(G, +)$. In fact, we can consider any map $\eta$ from $L$ to $G$ and define the $G$-degree of the element $e\in L$ by
	\[\|e\|_{G}=\eta(e).\]
	One defines
	\[
	\|e_{i_1}e_{i_2}\cdots e_{i_s}\|_{G}=\|e_{i_1}\|_{G}+\|e_{i_2}\|_{G}+\cdots +\|e_{i_s}\|_{G}
	\]
	and this produces a $G$-grading on $E$. 
\end{obs}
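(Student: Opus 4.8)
The plan is to verify directly the two axioms defining a $G$-grading: that $E$ splits as an internal direct sum $E=\bigoplus_{g\in G}E_{g}$ of homogeneous components, and that $E_{g}E_{h}\subseteq E_{g+h}$ for all $g,h\in G$. First I would make the degree function precise on the canonical basis $B_{E}$. Using the fixed values $\eta(e_{i})\in G$ on the generators, for a basis monomial $w=e_{i_{1}}\cdots e_{i_{s}}$ with $i_{1}<\cdots<i_{s}$ I set $\|w\|_{G}=\eta(e_{i_{1}})+\cdots+\eta(e_{i_{s}})$, with $\|1\|_{G}=0$ the neutral element of $(G,+)$. Since every element of $B_{E}$ has a unique strictly increasing expression, this assigns to each basis monomial a well-defined element of $G$, and I define $E_{g}=\mathrm{span}_{F}\{w\in B_{E}\mid \|w\|_{G}=g\}$.

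The direct sum decomposition is then immediate: the assignment $w\mapsto\|w\|_{G}$ partitions $B_{E}$ into disjoint classes indexed by $G$, each $E_{g}$ is the span of the corresponding class, and since $B_{E}$ is an $F$-basis of $E$ these subspaces are linearly independent and jointly span $E$. Hence $E=\bigoplus_{g\in G}E_{g}$, and every element of $E$, being a finite $F$-linear combination of basis monomials, has only finitely many nonzero homogeneous components.

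For the multiplicative compatibility I would reduce, by bilinearity, to checking products of two basis monomials $u,w\in B_{E}$. In $E$ such a product is either $0$---which occurs precisely when the supports of $u$ and $w$ meet, since $e_{i}^{2}=0$ because $\mathrm{char}\,F\neq 2$---or equal to $\pm v$, where $v\in B_{E}$ is the strictly increasing reordering of the concatenation $uw$, whose support is the disjoint union of the supports of $u$ and $w$. In the first case $uw=0\in E_{g+h}$ trivially. In the second, the multiset of generators occurring in $v$ is exactly the union of those occurring in $u$ and in $w$, so $\|v\|_{G}=\|u\|_{G}+\|w\|_{G}=g+h$, and therefore $uw=\pm v\in E_{g+h}$.

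The only point deserving genuine care, and the main (if modest) obstacle, is that this last degree computation requires $G$ to be abelian: the indices of $v$ are the sorted merge of those of $u$ and of $w$, so $\|v\|_{G}$ is the sum of the same group elements as $\|u\|_{G}+\|w\|_{G}$ but generally listed in a different, interleaved order, and only commutativity of $(G,+)$ guarantees the two sums agree. Reordering the factors moreover affects the monomial only through the scalar sign $\pm 1$, which does not change membership in $E_{g+h}$. Once this observation is recorded, the remaining verifications are the routine bookkeeping described above, and no further difficulty arises.
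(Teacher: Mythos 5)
Your proof is correct and is precisely the routine verification the paper leaves implicit: the Remark is stated without proof, and your check of the direct-sum decomposition over $B_E$ together with $E_{g}E_{h}\subseteq E_{g+h}$ via products of basis monomials (zero when supports meet since $\mathrm{char}\,F\neq 2$ forces $e_i^2=0$, otherwise $\pm$ the sorted merge, with commutativity of $(G,+)$ ensuring the degrees add) is exactly the argument the authors evidently intend. A point in your favor: you correctly tightened the paper's loose phrase ``any map $\eta$ from $L$ to $G$'' to a choice of degrees on the basis generators $e_i$, which is what the construction actually requires for the homogeneous pieces of $L$, and hence the grading, to be well defined.
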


\begin{definition}
We say that a $\mathbb{Z}$-grading $E=\oplus_{n\in\mathbb{Z}} E_{n}$ on the Grassmann algebra is of finite coverage if there exists a list $(n_{1},n_{2},\ldots, n_{k})$ of integers such that $L\subset E_{n_{1}}\oplus E_{n_{2}}\oplus\cdots\oplus E_{n_{k}}$.
\end{definition}
\begin{obs}
Recall that if $A=\oplus_{g\in G} A_g$ is a $G$-graded algebra the support of the grading consists of the elements $g\in G$ such that $A_g\ne 0$. 
Clearly we have that
\begin{itemize}
\item Every $\mathbb{Z}$-grading on $E$ defined by a fixed list is a $\mathbb{Z}$-grading of finite coverage.
\item Every $\mathbb{Z}$-grading on $E$ of finite support is a $\mathbb{Z}$-grading of finite coverage. However the converse is not true. For example, the $\mathbb{Z}$-grading $E^{can}$ is of finite coverage (since $L\subset E^{(1)}$) but it does not have finite support.
\end{itemize}
\end{obs}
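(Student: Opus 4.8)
The plan is to treat this Remark as three separate assertions and to verify each directly by unwinding the relevant definitions; none of them requires any computation beyond tracking where the generators $e_i$ land, so I expect no serious obstacle, only one point that must be stated carefully.

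First I would settle the claim that a grading defined by a fixed list is of finite coverage. Recall that the grading $E_{(n_{1},\ldots, n_{l})}^{(v_{1},\ldots, v_{l})}$ is built by splitting the basis $e_1, e_2,\ldots$ into the $l$ disjoint sets spanning $L_{n_1}^{v_1},\ldots, L_{n_l}^{v_l}$ and decreeing $\|e_k\| = n_j$ exactly when $e_k \in L_{n_j}^{v_j}$. Thus every generator is homogeneous, lying in $E_{n_j}$ for the appropriate $j$, and since $L = L_{n_1}^{v_1}\oplus\cdots\oplus L_{n_l}^{v_l}$ we obtain $L \subseteq E_{n_1}\oplus\cdots\oplus E_{n_l}$. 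The finite list $(n_1,\ldots,n_l)$ then witnesses finite coverage, which is exactly the required definition.

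Next I would handle the implication that finite support forces finite coverage. If the support of the grading is a finite set $\{m_1,\ldots,m_r\}$, then $E_n = 0$ for $n \notin \{m_1,\ldots,m_r\}$, so $E = E_{m_1}\oplus\cdots\oplus E_{m_r}$. Since $L$ is a subspace of $E$, a fortiori $L \subseteq E_{m_1}\oplus\cdots\oplus E_{m_r}$, and the list $(m_1,\ldots,m_r)$ again witnesses finite coverage. To see that the converse fails I would exhibit $E^{can}$ as a separating example: its homogeneous components $E^{(n)}$ are nonzero for every $n \geq 0$, so its support is the infinite set $\{0,1,2,\ldots\}$; yet each generator satisfies $|supp(e_i)| = 1$, whence $e_i \in E^{(1)}$ and $L \subseteq E^{(1)}$, so the one-element list $(1)$ already yields finite coverage.

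The only point requiring care --- and what keeps the two notions genuinely distinct --- is that finite coverage is a constraint on the generating space $L$ alone, asking merely that $L$ embed in a finite sum of homogeneous components, and in particular \emph{not} requiring $L$ itself to be a graded subspace; finite support is by contrast a constraint on the whole of $E$. This asymmetry is precisely what allows $E^{can}$, whose single degree-one slice $E^{(1)}$ already absorbs all of $L$ while the algebra spreads across infinitely many degrees, to have finite coverage without having finite support.
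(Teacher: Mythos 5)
Your verification is correct and coincides with the paper's intended (and unwritten) justification: the Remark is stated in the paper with no proof beyond the parenthetical ``since $L\subset E^{(1)}$'' for the $E^{can}$ example, and your unwinding of the definitions --- homogeneity of the generators for list gradings, the trivial inclusion $L\subseteq E_{m_1}\oplus\cdots\oplus E_{m_r}$ under finite support, and $E^{can}$ as the separating example --- is exactly the argument the authors take as clear. Nothing needs to be added or changed.
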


In what follows we will be interested in gradings on $E$ of finite coverage.

Assume $F$ is of characteristic 0. Let $V_n$ be the vector subspace of $F\langle X|\mathbb{Z}\rangle$ spanned by all multilinear monomials in $x_1$, $x_2$, \dots, $x_n\in X$ with all possible $\mathbb{Z}$-gradings on the $x_i$. Then it is clear that if $A$ is a $\mathbb{Z}$-graded algebra the $T_{\mathbb{Z}}$-ideal $T_{\mathbb{Z}}(A)$ is generated by all polynomials lying in the intersections $T_{\mathbb{Z}}(A)\cap V_n$, $n\ge 1$. 
The following lemma is standard. Its proof is easy; when the grading group is $\mathbb{Z}_2$ it is the contents of Lemmas 3 and 4 of \cite{disil}. 
\begin{lema} Let the field $F$ be of characteristic 0 and let $f(x_{1},\ldots, x_{n})\in V_{n}$ be multilinear. Then $f$ is an ordinary identity for $E$, that is $f\in T(E)$, if and only if for every map  $h\colon\{1,\ldots, n\}\to\mathbb{Z}_{2}$, there exist $a_{1}$, \dots, $a_{n}\in B_{E}$ with pairwise disjoint supports such that, for all $i$ satisfying $|supp(a_{i})|\equiv h(i)\pmod{2\mathbb{Z}}$ one has $f(a_{1},\ldots, a_{n})=0$. 
	\label{market1}
	\end{lema}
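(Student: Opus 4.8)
The plan is to reduce the verification that a multilinear $f$ is an ordinary identity of $E$ to a finite combinatorial condition indexed by the parity maps $h$, exploiting two structural features of the Grassmann algebra: that a multilinear polynomial automatically annihilates any tuple of basis monomials whose supports overlap, and that basis monomials with pairwise disjoint supports commute or anticommute according only to the parities of the sizes of their supports.

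The forward implication is immediate: if $f\in T(E)$ then $f$ vanishes under every substitution, so for each $h$ one may take any basis elements $a_{1},\ldots,a_{n}$ with pairwise disjoint supports and $|supp(a_{i})|\equiv h(i)\pmod{2\mathbb{Z}}$, and $f(a_{1},\ldots,a_{n})=0$ trivially. Such tuples always exist: using disjoint blocks of fresh generators $e_{j}$, one realizes parity $1$ by a single $e_{j}$ and parity $0$ by a product $e_{j}e_{j'}$ (or by the empty product $1$), keeping all supports disjoint. For the converse, the first reduction is that by multilinearity it suffices to test $f$ on substitutions by basis monomials $a_{1},\ldots,a_{n}\in B_{E}$. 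The key observation is then that if two of these monomials share a generator $e_{k}$ in their supports, then every monomial $a_{\sigma(1)}\cdots a_{\sigma(n)}$ in the expansion of $f(a_{1},\ldots,a_{n})$ contains $e_{k}$ at least twice, and hence vanishes because $e_{k}^{2}=0$; thus $f(a_{1},\ldots,a_{n})=0$ automatically whenever the supports are not pairwise disjoint, and only disjoint-support evaluations remain to be analyzed.

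For these, the crucial point is that pairwise disjoint supports give $a_{i}a_{j}=(-1)^{|supp(a_{i})|\,|supp(a_{j})|}a_{j}a_{i}$, a sign depending only on the parities $h(i)=|supp(a_{i})|\bmod 2$. Consequently, for each permutation $\sigma$ the product $a_{\sigma(1)}\cdots a_{\sigma(n)}$ equals $\varepsilon_{h}(\sigma)\,a_{1}\cdots a_{n}$ for a sign $\varepsilon_{h}(\sigma)$ determined by $\sigma$ and $h$ alone, and writing $f=\sum_{\sigma}c_{\sigma}x_{\sigma(1)}\cdots x_{\sigma(n)}$ yields $f(a_{1},\ldots,a_{n})=\lambda_{h}\,(a_{1}\cdots a_{n})$ with scalar $\lambda_{h}=\sum_{\sigma}c_{\sigma}\varepsilon_{h}(\sigma)$ independent of the particular choice of the $a_{i}$. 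Since the $a_{i}$ have disjoint supports, $a_{1}\cdots a_{n}$ is, up to sign, a nonzero basis element of $E$, so $f(a_{1},\ldots,a_{n})=0$ if and only if $\lambda_{h}=0$. The hypothesis now supplies, for each $h$, one disjoint-support tuple with parities $h$ on which $f$ vanishes; by this factorization that forces $\lambda_{h}=0$, whence $f$ vanishes on every disjoint-support tuple with those parities. Letting $h$ range over all parity maps covers all disjoint-support evaluations, and combined with the automatic vanishing on overlapping-support tuples this gives $f\in T(E)$.

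I expect the main obstacle to be the bookkeeping in the third paragraph: checking that $\varepsilon_{h}(\sigma)$ is well defined, i.e. that the accumulated sign obtained by reordering $a_{\sigma(1)}\cdots a_{\sigma(n)}$ into $a_{1}\cdots a_{n}$ is independent of the sequence of adjacent transpositions used and depends only on $\sigma$ and $h$, together with making fully precise the claim that a repeated generator forces each expanded monomial to vanish. Both follow from the graded-commutativity of disjoint-support monomials, but they deserve a careful statement since the rest of the argument is essentially formal.
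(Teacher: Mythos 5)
Your proof is correct and follows essentially the same route as the paper, which omits the argument and refers to the standard one (Lemmas 3 and 4 of \cite{disil}): multilinearity reduces everything to basis monomials, overlapping supports kill each expanded term via $e_k^2=0$, and for disjoint supports $f(a_1,\ldots,a_n)=\lambda_h\,a_1\cdots a_n$ with $\lambda_h$ depending only on the parity map $h$, so a single vanishing tuple per $h$ forces $\lambda_h=0$. The two points you flag (well-definedness of $\varepsilon_h(\sigma)$ and vanishing under repeated generators) are indeed the only details to check, and both follow at once from associativity and the sign rule $a_ia_j=(-1)^{|supp(a_i)||supp(a_j)|}a_ja_i$.
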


\section{Graded identities of $E^{k^\ast}$ and $E^{\infty}$}\label{Section5}

Let $k$ be a positive integer. We shall deal with the $\mathbb{Z}$-grading $E^{k^\ast}=\oplus_{n\in\mathbb{Z}}A_{n}$. In this case we have $e_{1}$, \dots, $e_{k}\in A_{1}$ and $e_{n}\in A_{0}$, for  $n>k$. Obviously, if $n>k$ or $n<0$ we have $A_{n}=0$. Therefore we have the following decomposition of $E^{k^*}$:
\[
E^{k^\ast}=A_{0}\oplus A_{1}\oplus\cdots\oplus A_{k}.
\]
We observe that for every $t=1$, \dots, $k$, a (nonzero) product of the generators $e_i$, say $w\in B_E$, belongs to the component $A_{t}$ if only if $w$ has exactly $t$ factors in the set $\{e_{1},\ldots, e_{k}\}$. Also if $1\le t\le k$ the homogeneous component of degree $t([k/t]+1)$ is trivial.

It is immediate to see that the algebra $E^{k^\ast}$ satisfies the following $\mathbb{Z}$-graded polynomial identities:
\begin{itemize}
	\item $x$, if $\alpha(x)\notin\{0,\ldots, k\}$.
	\item $[x_{1}, x_{2}, x_{3}]$, for all degrees $\alpha(x_{1})$, $\alpha(x_{2}$) and $\alpha(x_{3})$. 
\end{itemize} 

Given a positive integer $t$, $1\le t\le k$, we consider the non-negative integers $m$, $l_{1}$, \dots, $l_{t}$. Let us denote by $V_{m ,l_{1},\ldots, l_{t}}$ the vector space of $\mathbb{Z}$-graded multilinear polynomials having: 
	
\begin{tabular}{rll}
$m$ &  variables of degree 0, & namely  $z_{1}$, \dots, $z_{m}$,\\ 
$l_{1}$ & variables of degree  1, & namely $x_{1}^{1}$, \dots, $x_{l_{1}}^{1}$,\\
$\vdots$ &&\\ 
$l_{t}$ &  variables of degree $t$, & namely $x_{1}^{t}$, \dots, $x_{l_{t}}^{t}$.
\end{tabular}

\begin{proposition}\label{correspondencia}
Assume $F$ is of characteristic 0. 
	Let $1\leq t\leq k$ and $n=l_{1}+l_{2}+\cdots + l_{t}+ m$, we define $\psi\colon  V_{n}\to V_{m ,l_{1},\ldots, l_{t}} $ as the linear isomorphism induced by the map 
\[
	x_{i}\mapsto   
	\begin{cases} x_{i}^{1},\text{ if }  1\leq i\leq l_{1} \\  
	x_{i-l_{1}}^{2},\text{ if } l_{1}+1\leq i\leq l_{1}+l_{2} \\ 
	\vdots \\ 
	x_{i-(l_{1}+\cdots + l_{t-1})}^{t},\text{ if } l_{1}+\cdots + l_{t-1}+1\leq i\leq l_{1}+\cdots + l_{t-1}+l_{t} \\ 
	z_{i-(l_{1}+\cdots +l_{t})} , \text{ otherwise } 
	\end{cases}.
\]
	Then	
	\begin{enumerate}
		\item For every choice of $l_{1}$, \dots, $l_{t}$, $m$, one has 
\[
\psi(V_{n}\cap T(E))=V_{m, l_{1},\ldots, l_{t}}\cap T_{\mathbb{Z}}(E^{\infty}).
\]
		\item If $(1\times l_{1})+ (2\times l_{2})+ \cdots + (t\times l_{t})\leq k$ the following equality holds: 
\[
\psi(V_{n}\cap T(E))=V_{m, l_{1},\ldots, l_{t}}\cap T_{\mathbb{Z}}(E^{k^{\ast}}).
\]
	\end{enumerate}
\end{proposition}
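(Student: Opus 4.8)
The plan is to reduce everything to the parity description of $T(E)$ given in Lemma~\ref{market1}. The first, easy, observation is that $\psi$ only relabels the variables of a multilinear polynomial; hence for any $w_1,\ldots,w_n\in B_E$ the evaluation of $\psi(f)$ obtained by sending the variable coming from $x_i$ to $w_i$ equals $f(w_1,\ldots,w_n)$. Since every $\mathbb{Z}$-graded substitution into $\psi(f)$ is in particular an ordinary substitution into $f$, and since by multilinearity it is enough to test on the basis $B_E$, we get at once $\psi(V_n\cap T(E))\subseteq V_{m,l_1,\ldots,l_t}\cap T_{\mathbb{Z}}(E^{\infty})$, and likewise with $E^{k^\ast}$ in place of $E^{\infty}$; this uses no hypothesis on the $l_j$. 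Because $\psi$ is a linear isomorphism, the reverse inclusions amount to the contrapositive statement: if $f\in V_n$ and $f\notin T(E)$ then $\psi(f)\notin T_{\mathbb{Z}}(E^{\infty})$ (respectively $\psi(f)\notin T_{\mathbb{Z}}(E^{k^\ast})$, under the bound on the $l_j$).

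Next I would record the computation underlying Lemma~\ref{market1}. Writing $f=\sum_{\sigma}c_\sigma x_{\sigma(1)}\cdots x_{\sigma(n)}$ and taking $a_1,\ldots,a_n\in B_E$ with pairwise disjoint supports, each monomial evaluates to $a_1\cdots a_n$ up to a sign determined solely by $\sigma$ and by the parities $h(i):=|supp(a_i)|\bmod 2$; thus $f(a_1,\ldots,a_n)=C(h)\,a_1\cdots a_n$ for a scalar $C(h)$ depending only on $h$, and $a_1\cdots a_n\neq 0$ exactly because the supports are disjoint. Consequently $f\in T(E)$ if and only if $C(h)=0$ for every $h\colon\{1,\ldots,n\}\to\mathbb{Z}_2$, and $f\notin T(E)$ produces a parity map $h^\ast$ with $C(h^\ast)\neq 0$. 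To finish it then suffices to realize $h^\ast$ by a homogeneous graded substitution with pairwise disjoint supports, because such a substitution is also an admissible disjoint-support substitution in $E$, on which $\psi(f)$ takes the nonzero value $C(h^\ast)\,a_1\cdots a_n$.

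The realization of $h^\ast$ is where the two gradings differ. In $E^{\infty}$ the even-indexed generators have degree $0$, so a slot of degree $d$ may be filled by a product of $d$ fresh odd-indexed generators together with $r$ fresh even-indexed ones: its degree stays $d$ while its length $d+r$ can be made of either parity by choosing $r\in\{0,1,2\}$. Allotting disjoint blocks of generators to the various slots (possible since $\dim L=\infty$) realizes $h^\ast$ exactly, proving part (1). In $E^{k^\ast}$ the same idea applies, now using the degree-$0$ tail generators $e_{k+1},e_{k+2},\ldots$ to adjust parities, except that a slot of degree $j$ must consume exactly $j$ of the scarce generators $e_1,\ldots,e_k$. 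The $l_j$ slots of degree $j$ therefore require $\sum_{j=1}^{t} j\,l_j$ of these generators, which is available precisely when $\sum_{j=1}^{t} j\,l_j\le k$; the degree-$0$ slots use tail generators only. Under this bound $h^\ast$ is again realizable, proving part (2).

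The main obstacle will be the bookkeeping in the $E^{k^\ast}$ case: one must at the same time match each prescribed degree $j$ using exactly $j$ of the finitely many generators $e_1,\ldots,e_k$, and still be able to flip each length-parity independently by means of the unlimited tail generators, keeping all supports pairwise disjoint. It is exactly the inequality $\sum_{j} j\,l_j\le k$ that makes the first requirement feasible without running out of degree-$1$ generators; this is why part (2) needs the hypothesis whereas part (1), where degree-$0$ generators are unrestricted, does not.
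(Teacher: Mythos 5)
Your proof is correct and follows essentially the same route as the paper: the easy inclusion by relabeling, and the reverse inclusion via the parity criterion of Lemma~\ref{market1}, realized by graded substitutions with pairwise disjoint supports in which each degree-$j$ slot consumes exactly $j$ of the generators $e_1,\ldots,e_k$ (whence the hypothesis $\sum_j j\,l_j\le k$) and a fresh degree-$0$ tail generator flips the length parity --- precisely the paper's construction of the elements $a_s=(\prod_j e_j)e_s^{\star}$. The only cosmetic difference is that you argue the contrapositive with an explicit scalar $C(h)$, while the paper verifies the ``for every $h$ there is a vanishing disjoint-support evaluation'' form of Lemma~\ref{market1} directly; these are the same argument.
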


\begin{proof}
	The proofs of both statements are quite similar, and that is why we shall prove the second of them. The inclusion $\psi(V_{n}\cap T(E))\subseteq V_{m, l_{1},\ldots, l_{t}}\cap T_{\mathbb{Z}}(E^{k^{\ast}})$ is immediate. Hence we shall prove the opposite inclusion. Let us assume
\[
f(x_{1}^{1},\ldots, x_{l_{1}}^{1},\ldots, x_{1}^{t},\ldots, x_{l_{t}}^{t}, z_{1},\ldots, z_{m} )= \psi(f(x_{1},\ldots, x_{n}))\in T_{\mathbb{Z}}(E^{k^{\ast}}).
\]
	is a multilinear $\mathbb{Z}$-graded polynomial identity for $E^{k^{\ast}}$. We shall prove that $f(x_{1},\ldots, x_{n})\in T(E)$.

	Let $h\colon \{1,\ldots, n\}\to\mathbb{Z}_{2}$ be a fixed map. We use Lemma \ref{market1} to construct a $\mathbb{Z}$-graded evaluation with pairwise disjoint supports,  respecting the degrees given by the map $h$ but vanishing $f$. 
	
	Since $(1\times l_{1})+ (2\times l_{2})+\cdots + (t\times l_{t})\leq k$, we consider disjoint subsets $S_{l_1}$, $S_{l_2}$, \dots, $S_{l_t}$ of $S=\{e_{1}, e_{2},\ldots, e_{k}\}$ such that $S_{l_i}$ has $i\times l_{i}$ elements, for $i=1$, \dots, $t$.
	
Write each set $S_{l_i}$ as the disjoint union of $l_{i}$ subsets of cardinality $i$, that is
	\[
	S_{l_i}=S_{1}^{i}\cup S_{2}^{i}\cup\cdots\cup S_{l_{i}}^{i}.
	\]
		If the integer $s$ is such that $l_{1}+\cdots + l_{i-1}+ 1\leq s\leq l_{1}+\cdots + l_{i}$, we define
	\[
	a_{s}=(\prod_j e_{j}) e_{s}^{\star} \text{ where }e_{j}\in S_{s-(l_{1}+\cdots + l_{i-1})}^{i},
	\]
	and 
	\[
	e_{s}^{\star}=1 \text{ if and only if } i\equiv h(s)\pmod{2\mathbb{Z}},
	\]
	while otherwise the symbol $e_{s}^{\star}$ must be replaced by some generator belonging to $A_{0}$. Since there exist infinitely many generators in $A_{0}$, we choose every consecutive $e_{s}^{\star}$ in such a way that the list $Y=\{e_{s}^{\star}\}$ has no repeated elements, for each $s$. 

	Now we suppose that $n\geq i>l_{1}+\cdots + l_{t}$. In this case we choose monomials $b_{i}$ whose factors do not belong to $S\cup Y$, and such that
	\[
	|supp(b_{i})|\equiv h(i)\pmod{2\mathbb{Z}}\text{ and } b_{1},\ldots, b_{m}\text{ have disjoint supports. }
	\]
It follows that the evaluation
\[
x_j\mapsto a_j, \quad 1\le j\le l_{1}+\cdots + l_{t}; \qquad z_j\mapsto b_j, \quad 1\le j\le m
\]
preserves the map $h$ and also the $\mathbb{Z}$-grading $E^{k^{\ast}}$. Therefore we obtain that
\[
f(a_{1},\ldots, b_{m})=0,
\]
and thus the proof is complete.
\end{proof}

We shall describe a generating set of polynomial identities for the $\mathbb{Z}$-gradings denoted by $E^{k^\ast}$ and $E^{\infty}$. To this end we define the set $D=\{(l_1,\ldots, l_k)\in\mathbb{N}_{0}\times\cdots\times\mathbb{N}_{0}\}$ such that
\[
1\times l_1 + 2\times l_2 +\cdots + k\times l_k\ge k+1.
\]
For each element $(l_1,\ldots, l_k)\in D$, we consider the set $C_{D}$ of   multilinear polynomials
\[x_{1}x_{2}\cdots x_{n},\]
having $l_i$ variables of degree $i$ and $n=l_1+\cdots + l_k$. We observe that each monomial $x_{1}^{t}x_{2}^{t} \cdots x_{[\frac{k}{t}]+1}^{t}$ belongs to $C_{D}$, for $t=1$, \dots, $k$.
It is immediate to verify that all the polynomials above are $\mathbb{Z}$-graded identities for $E^{k^\ast}$. On the other hand every polynomial identity in $C_{D}$ is of homogeneous degree $\geq k+1$, thus it follows from the identities $x$ with $\alpha(x)\notin\{0,\ldots,k\}$.

\begin{theorem}
Assume that $F$ is a field of characteristic 0. 
	Let $T_{\mathbb{Z}}(E^{d})$ be the $T_{\mathbb{Z}}$-ideal of the $\mathbb{Z}$-graded polynomial identities for $E^{d}$, where $d$ stands for either $\infty$ or $k^{\ast}$. Then  
	\begin{enumerate}
		\item $T_{\mathbb{Z}}(E^{\infty})$ is generated by the set of the following polynomials:
		\begin{itemize}
            \item $x$, if $\alpha(x)<0$.
			\item $[x_{1}, x_{2}, x_{3}]$, for every choice of the degrees $\alpha(x_{1})$, $\alpha(x_{2})$, $\alpha(x_{3})$. 
		\end{itemize}
		\item $T_{\mathbb{Z}}(E^{k^{\ast}})$ is generated by the set of the following polynomials:
		\begin{itemize} \label{identidades}
            \item $x$, if $\alpha(x)\notin\{0,\ldots, k\}$. 
            \item $[x_{1}, x_{2}, x_{3}]$, for every choice of the degrees $\alpha(x_{1})$, $\alpha(x_{2})$, $\alpha(x_{3})$.
\end{itemize}
\end{enumerate}
\end{theorem}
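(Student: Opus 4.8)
The plan is to prove each item by the two inclusions, the easy one being common to both. That the $T_{\mathbb{Z}}$-ideal generated by the listed polynomials sits inside $T_{\mathbb{Z}}(E^{d})$ (for $d=\infty$ or $d=k^\ast$) is routine: a variable $x$ with $\alpha(x)<0$ (resp. $\alpha(x)\notin\{0,\dots,k\}$) can only be evaluated in a zero homogeneous component, and every triple commutator $[x_1,x_2,x_3]$ vanishes on $E^{d}$ because $E$ satisfies $[x_1,x_2,x_3]=0$ as an ordinary identity, a fact independent of the grading. So the real content is the reverse inclusion, and since $\mathrm{char}\,F=0$ it suffices to argue for multilinear polynomials.

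For part (1) I would take a multilinear $f\in T_{\mathbb{Z}}(E^{\infty})$. As $x$ is a graded identity whenever $\alpha(x)<0$ and $f$ is multilinear, modulo the generated ideal I may assume every variable of $f$ has degree $\ge 0$, so that $f\in V_{m,l_1,\dots,l_t}$ for suitable parameters. By Proposition \ref{correspondencia}(1) the polynomial $g=\psi^{-1}(f)$ lies in $V_n\cap T(E)$. Here I invoke the Krakowski--Regev description \cite{KR}, valid in characteristic $0$: $T(E)$ is generated as a T-ideal by $[x_1,x_2,x_3]$, so its multilinear component $T(E)\cap V_n$ is spanned by polynomials $u\,[p,q,r]\,w$ with $u,p,q,r,w$ monomials in disjoint variables covering $\{x_1,\dots,x_n\}$ and $p,q,r$ nonempty. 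The decisive step is to transport this through $\psi$: being induced by a bijective relabeling of variables, $\psi$ satisfies $\psi(u[p,q,r]w)=\psi(u)\,[\psi(p),\psi(q),\psi(r)]\,\psi(w)$, and $[\psi(p),\psi(q),\psi(r)]$ is a substitution, by graded monomials, of the graded identity $[x_1,x_2,x_3]$ for the degrees $\deg\psi(p),\deg\psi(q),\deg\psi(r)$. Since the generating set contains $[x_1,x_2,x_3]$ for every assignment of degrees, each summand lies in the $T_{\mathbb{Z}}$-ideal, and therefore so does $f=\psi(g)$.

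For part (2) the argument runs in parallel but needs a case split dictated by the range of Proposition \ref{correspondencia}(2). Starting from a multilinear $f\in T_{\mathbb{Z}}(E^{k^\ast})$, I first use the identities $x$ with $\alpha(x)\notin\{0,\dots,k\}$ to assume all variables have degree in $\{0,\dots,k\}$, so $f\in V_{m,l_1,\dots,l_t}$ with $t\le k$. If $1\cdot l_1+\cdots+t\cdot l_t\le k$, then Proposition \ref{correspondencia}(2) applies verbatim and the same $\psi$-transfer as above places $f$ in the generated ideal. If instead $1\cdot l_1+\cdots+t\cdot l_t\ge k+1$, then every monomial of $f$ is homogeneous of one and the same degree $d=\sum_i i\,l_i\ge k+1$; such a monomial is exactly a substitution of a single variable of degree $d$ into the identity $x$ with $\alpha(x)=d\notin\{0,\dots,k\}$, hence already lies in the generated ideal, and so does $f$. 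This second case is precisely what the auxiliary family $C_D$ records.

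I expect the main obstacle to be making the transfer step fully rigorous: one must verify that $\psi$, a priori only a linear isomorphism between multilinear spaces, genuinely intertwines the ordinary commutator structure with the graded one, and that every graded triple commutator arising as $[\psi(p),\psi(q),\psi(r)]$ is accounted for by the generating set. This is exactly why it is essential that $[x_1,x_2,x_3]$ be imposed for all choices of degrees rather than for specific ones. A secondary point requiring care is the boundary between the two cases in part (2), where Proposition \ref{correspondencia}(2) ceases to apply and one must instead fall back on the single-variable identities of high degree.
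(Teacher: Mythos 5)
Your proposal is correct and follows essentially the same route as the paper: reduce to multilinear polynomials, use the trivial identities $x$ (together with the remark about monomials of homogeneous degree $\ge k+1$, the paper's set $C_D$, for the case $\sum_i i\,l_i\ge k+1$) to land in the range of Proposition~\ref{correspondencia}, pull $f$ back through $\psi$ to an ordinary identity of $E$, write it via Krakowski--Regev as a combination of monomials times triple commutators, and push forward through $\psi$, which intertwines products and commutators since it is induced by a relabeling of variables. The transfer step you flag as the main obstacle is exactly the paper's computation $\psi\bigl(a_i[b_i,c_i,d_i]g_i\bigr)=\overline{a_i}\,[\overline{b_i},\overline{c_i},\overline{d_i}]\,\overline{g_i}$, and it goes through precisely because the graded triple commutator is imposed for all choices of degrees.
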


\begin{proof}
	 Let $I_{k}$ be the $T_{\mathbb{Z}}$-ideal of $F\langle X|\mathbb{Z}\rangle$ generated by the graded polynomials from Statement~\ref{identidades} of the Theorem. It is easy to show that $I_{k}\subseteq T_{\mathbb{Z}}(E^{k^\ast})$. Given a multilinear $\mathbb{Z}$-graded polynomial identity $f(x_{1}^{1}\ldots,x_{l_k}^{k},\ldots,z_{m})$ of $E^{k^\ast}$ we can suppose that $(1\times l_{1}) + \cdots + (k\times l_{k})\leq k$ and also that the degrees of all variables are $\geq 0$. By Proposition \ref{correspondencia} we have that $f(x_{1},\ldots, x_{n})$ is an ordinary polynomial identity of $E$. Hence $f=\sum_{i=1}^{h}a_{i}[b_{i}, c_{i}, d_{i}]g_{i}$ for some polynomials $a_{i}$, \dots, $g_{i}\in F\langle X\rangle$. As $f (x_1,\ldots, x_n)$ is multilinear we can assume that each of these elements is a monomial in $F\langle X\rangle$,  and $a_{i}b_{i}c_{i}d_{i}g_{i}\in V_{n}$ for every $i=1$, \dots, $h$. Therefore we obtain that
	 $f(x_{1}^{1}\ldots,x_{l_k}^{k},\ldots,z_{m})=\psi(f (x_1,\ldots, x_n))=\sum_{i=1}^{h}\psi(a_{i}[b_{i}, c_{i}, d_{i}]g_{i})=\sum_{i=1}^{h}\overline{a_{i}}[\overline{b_{i}}, \overline{c_{i}}, \overline{d_{i}}]\overline{g_{i}}$. Here $\overline{a_{i}}$, $\overline{b_{i}}$, $\overline{c_{i}}$, $\overline{d_{i}}$, $\overline{g_{i}}$ are monomials in $F\langle X|\mathbb{Z}\rangle$ and $\overline{a_{i}}\overline{b_{i}} \overline{c_{i}}\overline{d_{i}}\overline{g_{i}}\in V_{m ,l_{1},\ldots, l_{t}}$.
	 
	  This proves the second statement of our theorem. The first assertion is proved by using a similar argument.  
\end{proof}

Till the end of this section we consider $F$ an infinite field of characteristic $p>2$. We start with the graded identities of the algebra $E^{k^\ast}$.    
 
The ideas are similar to those in characteristic 0. Nevertheless we cannot restrict ourselves to the multilinear polynomials only but we have to work instead with the multihomogeneous identities.  
 
Most of the results in this section are adaptations of those from \cite{centroneP}. In that paper the author dealt with the $\mathbb{Z}_{2}$-graded case. 

First we state the following theorem that we shall use later on. Its proof is well known, see for example \cite{latyshev, KR}. Formally in these two papers the authors worked over a field of characteristic 0; the argument given is characteristic-free, we refer to \cite{agpk} for further details. 
\begin{theorem}\label{TGP}
Let $E$ be the infinite dimensional Grassmann algebra over an infinite field. Then
\begin{enumerate}
\item The $T$-ideal $I$ of the polynomial identities of $E$ is generated by the triple commutator $[x_1,x_2,x_3]$, that is $I=\langle [x_{1},x_{2},x_{3}]\rangle_{T}$.
\item The polynomials $[x_{1},x_{2}][x_{1},x_{3}]$ and $[x_1,x_2][x_3,x_4]+[x_1,x_3][x_2,x_4]$ belong to $I$.
\end{enumerate}
\end{theorem}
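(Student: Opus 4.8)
The plan is to treat the two statements with very different tools: Statement (2) is essentially a direct verification, whereas Statement (1) requires the full Krakowski--Regev mechanism of reducing to a proper basis and then counting dimensions. Throughout I would work with the canonical $\mathbb{Z}_2$-grading $E=E_0\oplus E_1$ and lean on the single structural fact that drives everything: for homogeneous $a,b\in E$ one has $ab=(-1)^{|a||b|}ba$, so if either of $a,b$ lies in $E_0$ then $[a,b]=0$, while if both lie in $E_1$ then $[a,b]=2ab\in E_0$. In every case $[a,b]$ lands in the centre $E_0$; moreover, for $a\in E_1$ one has $a^2=0$ because $\mathrm{char}\,F\neq 2$.

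First I would dispose of the easy inclusion and of Statement (2). Since $[a,b]$ is always central, $[[a,b],c]=0$ for all $a,b,c\in E$, so $[x_1,x_2,x_3]\in I$ and hence $\langle[x_1,x_2,x_3]\rangle_T\subseteq I$. For the two polynomials in (2) I would substitute homogeneous elements, write each argument through its $E_1$-part, and use $[a,b]=2a_1b_1$ together with $b_1a_1=-a_1b_1$ and $a_1^2=0$: then $[x_1,x_2][x_1,x_3]$ evaluates to a scalar multiple of $a_1b_1a_1c_1=-a_1^2b_1c_1=0$, while $[x_1,x_2][x_3,x_4]+[x_1,x_3][x_2,x_4]$ evaluates to a multiple of $a_1b_1c_1d_1+a_1c_1b_1d_1=0$. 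This already places both polynomials in $I$; alternatively, they arise as formal consequences of the triple commutator, which I describe next.

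The substance is the reverse inclusion $I\subseteq J$, where $J=\langle[x_1,x_2,x_3]\rangle_T$. Modulo $J$ every commutator of length $\ge 2$ is central and every commutator of length $\ge 3$ vanishes. The decisive extra relations come from expanding the triple commutator $[[x_1x_2,x_3],x_4]\in J$: applying the Leibniz rule $[a,bc]=[a,b]c+b[a,c]$ and using centrality gives $[x_1,x_4][x_2,x_3]+[x_1,x_3][x_2,x_4]\equiv 0\pmod J$, and the analogous expansion of $[[x_1x_3,x_2],x_4]$ gives $[x_1,x_2][x_3,x_4]\equiv[x_1,x_4][x_2,x_3]\pmod J$; combining the two yields the antisymmetry relation of Statement (2) modulo $J$, and the substitution $x_3\mapsto x_1$ yields $[x_1,x_2][x_1,x_3]\equiv 0$. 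Now the PBW/proper-polynomial reduction recalled in the Preliminaries shows that $F\langle X\rangle$ is spanned modulo $J$ by products $x_{i_1}\cdots x_{i_m}[x_{j_1},x_{j_2}]\cdots[x_{j_{2r-1}},x_{j_{2r}}]$ with $i_1\le\cdots\le i_m$, and the antisymmetry relations let me take the commutator indices strictly increasing, $j_1<j_2<\cdots<j_{2r}$. In the multilinear component $P_n$ of degree $n$ this leaves at most $\sum_{r}\binom{n}{2r}=2^{n-1}$ spanning monomials.

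The last step, and the main obstacle, is linear independence. I would exhibit, for each of these $2^{n-1}$ proper monomials, a concrete evaluation of its variables by elements of $B_E$ with controlled supports (typically $1+e_i$ for the variables outside commutators and single generators $e_i$ for those inside them, all with pairwise disjoint supports) that detects exactly that monomial and annihilates the rest; this gives $\dim\bigl(P_n/(P_n\cap T(E))\bigr)\ge 2^{n-1}$. Since the reduction already yields $\dim\bigl(P_n/(P_n\cap J)\bigr)\le 2^{n-1}$ and the inclusion $J\subseteq T(E)$ forces $\dim\bigl(P_n/(P_n\cap J)\bigr)\ge\dim\bigl(P_n/(P_n\cap T(E))\bigr)$, all three numbers equal $2^{n-1}$, whence $P_n\cap J=P_n\cap T(E)$ for every $n$ and therefore $J=T(E)$. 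In characteristic $0$ the multilinear components determine the $T$-ideal, so this is conclusive. I expect the genuine difficulty over an arbitrary infinite field to be purely organizational, namely carrying out both the reduction to a proper basis and the separating evaluations for \emph{multihomogeneous} components rather than multilinear ones; this is precisely the point at which the characteristic-free sharpening of the Latyshev--Krakowski--Regev argument is required, and it is why the result holds over any infinite field.
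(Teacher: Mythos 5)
The paper does not prove this theorem at all: it is stated as known, with \cite{latyshev, KR} cited for the proof and \cite{agpk} for the fact that the argument is characteristic-free. Your proposal therefore differs from the paper simply by supplying an actual proof, and the one you sketch is the classical Latyshev--Krakowski--Regev argument: the direct $\mathbb{Z}_2$-graded verification of item (2) is correct (writing $[a,b]=2a_1b_1$ and using $a_1^2=0$), your derivation of both polynomials of (2) as consequences of $[x_1,x_2,x_3]$ by expanding $[[x_1x_2,x_3],x_4]$ and $[[x_1x_3,x_2],x_4]$ is also correct (and gives more than the literal statement, which only requires membership in $T(E)$), and the reduction of the multilinear component $P_n$ modulo $J=\langle[x_1,x_2,x_3]\rangle_T$ to the $\sum_r\binom{n}{2r}=2^{n-1}$ sorted proper monomials, followed by the squeeze $\dim\bigl(P_n/(P_n\cap J)\bigr)\le 2^{n-1}\le\dim\bigl(P_n/(P_n\cap T(E))\bigr)$ with $J\subseteq T(E)$, is exactly how the original proof runs. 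What the paper's choice buys is brevity; what yours buys is self-containedness, at the cost of the two caveats below.

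Two steps of your sketch need repair. First, the independence step is overstated: no single evaluation of the kind you describe annihilates all competing monomials. If for a fixed even subset $T$ you substitute $x_j\mapsto e_j$ for $j\in T$ and $x_i\mapsto 1$ for $i\notin T$, then \emph{every} basis monomial $f_S$ with $S\subseteq T$ survives, evaluating to $\pm 2^{|S|/2}e_{t_1}\cdots e_{t_{|T|}}$; what one actually obtains is a linear system triangular with respect to inclusion of the sets $S$, and induction on $|T|$ together with the invertibility of $2$ (here $\mathrm{char}\,F\neq 2$ is used again) kills all coefficients. This is a routine fix, but ``detects exactly that monomial and annihilates the rest'' is not literally achievable. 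Second, the positive-characteristic case is genuinely more than ``organizational'': it is the content of \cite{agpk}. In the multihomogeneous setting the variables outside commutators occur with exponents, the coefficients are separated by substitutions of the form $\lambda\cdot 1+u$ with $u$ even, using Vandermonde-type arguments over the infinite field, and one must rule out new identities such as $x^p$ --- which \emph{is} an identity of the non-unitary Grassmann algebra, as the paper itself recalls --- so the presence of $1\in E$ enters essentially. Since the paper likewise delegates exactly this point to \cite{agpk}, your deferral mirrors the paper's own treatment, but as written your argument is complete only in characteristic zero.
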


 \begin{proposition}
 If $char(F)=p>0$ then $E^{k^\ast}$ satisfies the graded identity $x^p$ for every degree $\alpha(x)\neq 0$. 
 \end{proposition}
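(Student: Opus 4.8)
The plan is to reduce the claim to the fact, already invoked above for $E^{can}$, that the non-unitary Grassmann algebra satisfies the identity $x^p$ in characteristic $p>2$ (see \cite[Lemma 12]{agpk}). First I would dispose of the trivial cases: the support of the grading $E^{k^\ast}$ is contained in $\{0,1,\ldots,k\}$, since $A_n=0$ whenever $n<0$ or $n>k$. Hence for any degree $\alpha(x)\notin\{0,\ldots,k\}$ the component is trivial and $x^p=0$ holds vacuously. It therefore suffices to treat a homogeneous element $a\in A_t$ with $1\le t\le k$.

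The key structural remark is that, by the description of $E^{k^\ast}$, a basis monomial $w\in B_E$ lies in $A_t$ precisely when it has exactly $t$ factors among $e_1,\ldots,e_k$. In particular every such $w$ has length $\ge t\ge 1$, so $a$ is a linear combination of products of the $e_i$ of positive length; that is, $a$ has zero constant term and lies in the augmentation ideal of $E$ (the non-unitary Grassmann algebra). Applying the cited fact then gives $a^p=0$ at once, which is exactly the assertion.

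Should one prefer a self-contained argument instead of quoting the result, I would prove $a^p=0$ for any $a$ with zero constant term directly: write $a=u+v$ with $u$ in the even part of $E$, which is central, and $v$ in the odd part. A short computation gives $v^2=0$, and since $u$ is central the binomial theorem yields $a^p=u^p$ in characteristic $p$. Finally $u$ is a sum $\sum_i c_i w_i$ of pairwise commuting square-zero elements, so by the multinomial theorem every surviving term of $u^p$ carries the coefficient $p!$, which vanishes modulo $p$; hence $u^p=0$.

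The computation itself is routine, so there is no serious obstacle here. The one point that deserves care --- and the reason the full identity $x^p$ is needed, rather than the weaker $x^2$ used for odd degrees in the $E^{can}$ case --- is that in $E^{k^\ast}$ the $\mathbb{Z}$-degree $t$ of a homogeneous element does not determine its $\mathbb{Z}_2$-parity: a degree-$t$ element may be a sum of even-length and odd-length monomials. Thus one genuinely cannot bypass the augmentation-ideal argument, and the reduction carried out in the first two paragraphs is precisely what renders the parity issue irrelevant.
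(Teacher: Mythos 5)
Your proposal is correct and follows essentially the same route as the paper, whose entire proof is the one-line observation that the claim follows from the ordinary identity $x^p$ of the non-unitary Grassmann algebra in characteristic $p>2$ (citing \cite{regevgfa, agpk}); your first two paragraphs merely make explicit the reduction the paper leaves implicit, namely that every homogeneous element of nonzero degree in $E^{k^\ast}$ has zero constant term. Your optional self-contained computation of $a^p=0$ via the decomposition $a=u+v$ is also correct and is a reasonable expansion of the cited fact.
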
  
 \begin{proof}
Follows from the ordinary identities of the Grassmann algebra in characteristic $p>2$, see for example \cite{regevgfa, agpk}.
 \end{proof}
 
\begin{lema}
If $n\in\mathbb{N}$ then the polynomial $t_{2n}=[z_{1},z_{2}][z_{3},z_{4}]\cdots [z_{2n-1},z_{2n}]$ is not a $\mathbb{Z}$-graded identity for $E^{k^\ast}$. Here $\alpha(z_{j})=0$ for every $j$. 
\end{lema}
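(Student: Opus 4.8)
The plan is to exhibit an explicit $\mathbb{Z}$-graded evaluation of the variables $z_1,\ldots,z_{2n}$ by homogeneous elements of degree $0$ on which $t_{2n}$ fails to vanish. In the grading $E^{k^\ast}$ every generator $e_i$ with $i>k$ lies in the component $A_0$, so the natural choice is to substitute $z_j\mapsto e_{k+j}$ for $j=1,\ldots,2n$. Each $e_{k+j}$ is homogeneous of degree $0$, hence this substitution respects the $\mathbb{Z}$-grading and constitutes a legitimate test for a graded identity.

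The key step is the computation of each commutator. Since $e_ae_b=-e_be_a$ in $E$, for distinct generators one has $[e_a,e_b]=e_ae_b-e_be_a=2e_ae_b$. Consequently
\[
t_{2n}(e_{k+1},\ldots,e_{k+2n})=\prod_{i=1}^{n}[e_{k+2i-1},e_{k+2i}]=2^{n}\,e_{k+1}e_{k+2}\cdots e_{k+2n}.
\]
The monomials arising from consecutive commutators have pairwise disjoint supports, so their product is, up to the scalar $2^{n}$, the basis element $e_{k+1}\cdots e_{k+2n}\in B_E$, which is nonzero in $E$.

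It would then remain only to observe that the scalar $2^{n}$ does not vanish, and this is exactly where the standing hypothesis $\operatorname{char}(F)=p>2$ enters: it guarantees that $2$ is invertible and hence $2^{n}\neq 0$. Therefore $t_{2n}(e_{k+1},\ldots,e_{k+2n})=2^{n}e_{k+1}\cdots e_{k+2n}\neq 0$, which shows $t_{2n}\notin T_{\mathbb{Z}}(E^{k^\ast})$. I do not expect a genuine obstacle here beyond bookkeeping; the only points requiring any care are verifying that each chosen generator really lands in $A_0$ (which holds precisely because its index exceeds $k$, so that there are infinitely many such generators available) and that the resulting monomial is a true basis element rather than an accidental cancellation, both of which follow at once from the disjointness of the supports.
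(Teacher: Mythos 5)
Your proof is correct and is essentially the paper's own argument: the same substitution $z_j\mapsto e_{k+j}$ yielding $t_{2n}(e_{k+1},\ldots,e_{k+2n})=2^{n}e_{k+1}\cdots e_{k+2n}\neq 0$, with nonvanishing guaranteed by $\operatorname{char}(F)\neq 2$. The only cosmetic difference is that you compute $[e_a,e_b]=2e_ae_b$ directly, whereas the paper routes the computation through its Theorem on the identities of $E$; the content is identical.
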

\begin{proof}
By Theorem~\ref{TGP} we obtain $t_{2n}(e_{k+1},\ldots, e_{k+2n})=2^{n}e_{k+1}\cdots e_{k+2n}\neq 0$. 
\end{proof}

The idea in the following lemma is similar to that  in Lemma 4.4 of \cite{centroneP}. 
\begin{lema}\label{lem 4.1}
Consider $l_{1}, \ldots, l_{k}\in\mathbb{N}_{0}$. Let $T=(x_{1}^{1})^{r_{1}^1}\cdots (x_{l_1}^{1})^{r_{l_1}^1}\cdots (x_{1}^{k})^{r_{1}^k}\cdots (x_{l_k}^{k})^{r_{l_k}^k}$ be a graded monomial satisfying the condition $\sum_{i=1}^{k}i(r_{1}^{i}+\cdots + r_{l_i}^{i})\leq k$. Let $r=\max\{r_{1}^{1},\ldots, r_{l_k}^{k}\}$ and suppose that $p>r$. Then $T$ is not a $\mathbb{Z}$-graded identity of $E^{k^\ast}$. 
\end{lema}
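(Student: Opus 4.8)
The goal is to exhibit a $\mathbb{Z}$-graded evaluation on $E^{k^\ast}$ under which the monomial $T$ does not vanish. The plan is to assign to each variable $x_j^i$ (of degree $i$) a product of generators $e$ so that the total evaluation yields a nonzero element of $E$. Since $T$ is a product of powers of variables, and the whole monomial has homogeneous degree $\sum_{i=1}^k i(r_1^i+\cdots+r_{l_i}^i)\le k$, I have enough ``room'' inside the set $S=\{e_1,\ldots,e_k\}$ (whose elements carry degree $1$ in $E^{k^\ast}$) to realize all the degree-bearing factors using distinct generators from $S$.

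First I would evaluate each variable $x_j^i$ of degree $i$ by a product of exactly $i$ distinct generators from $S$, chosen so that different variables consume pairwise disjoint blocks of generators. This is possible precisely because the hypothesis $\sum_{i=1}^k i(r_1^i+\cdots+r_{l_i}^i)\le k$ guarantees the total number of generators from $S$ needed for one copy of each factor does not exceed $k=|S|$. The subtlety is that $T$ contains \emph{powers} $(x_j^i)^{r_j^i}$, and in the Grassmann algebra the square of an odd-length product of distinct generators vanishes. So I cannot simply repeat the same value. Here is where the hypothesis $p>r=\max r_j^i$ enters, mirroring Lemma 4.4 of \cite{centroneP}: the plan is to evaluate $x_j^i$ not at a single monomial but at a sum of $r_j^i$ (or sufficiently many) monomials with disjoint supports, each of degree $i$, all lying in $A_i$. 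When such a homogeneous element $a$ of even total length is raised to the power $r_j^i$, the multinomial expansion produces a nonzero multiple of the product of all the chosen generators, and the relevant multinomial coefficient is a product of factorials none exceeding $r<p$, hence nonzero in characteristic $p$.

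The key computation is therefore the evaluation of $a^{r}$ where $a=\sum_s w_s$ is a sum of pairwise-disjoint basis monomials $w_s$ each of even length (so that the $w_s$ commute and each $w_s$ itself need not square to zero). Expanding $a^r$ by the multinomial theorem, only the term $\binom{r}{1,\ldots,1}\,w_1\cdots w_r = r!\,w_1\cdots w_r$ survives as the top product, and $r!\ne 0$ since $p>r$. Arranging these for every variable so that all the supports across all variables and all powers remain globally disjoint, the product $T$ evaluates to a nonzero scalar times a single nonzero basis element of $E$. I would also check the bookkeeping that each $a$ genuinely lies in the homogeneous component $A_i$ of $E^{k^\ast}$, which holds since each constituent monomial $w_s$ has exactly $i$ factors from $S$ and none outside, so $\|w_s\|=i$; thus the evaluation respects the grading.

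The main obstacle is the interplay between the \emph{parity} constraints of the Grassmann algebra (odd-length monomials square to zero, so naive repetition fails) and the \emph{grading} constraint (each $x_j^i$ must map into $A_i$). The hypothesis $p>r$ is exactly what defeats the characteristic-$p$ vanishing of the surviving multinomial coefficient, and the degree bound $\sum i(\cdots)\le k$ is exactly what provides enough distinct degree-$1$ generators inside $S$ to keep all supports disjoint. Verifying that these two resources suffice simultaneously — a careful but routine counting of how many distinct generators of $S$ each power $(x_j^i)^{r_j^i}$ consumes — is the technical heart, and it is precisely the adaptation of the argument of \cite[Lemma 4.4]{centroneP} to the present $\mathbb{Z}$-graded setting.
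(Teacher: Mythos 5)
Your overall strategy is the same as the paper's: allot to each variable $x_{t}^{i}$ a block of $i\,r_{t}^{i}$ generators from $S=\{e_{1},\ldots,e_{k}\}$ (pairwise disjointness being exactly what the hypothesis $\sum_{i}i(r_{1}^{i}+\cdots+r_{l_i}^{i})\leq k$ buys), evaluate $x_{t}^{i}$ at a sum of $r_{t}^{i}$ monomials with disjoint supports, and extract the coefficient $\prod_{i,t}r_{t}^{i}!$, which is nonzero since $p>r$. However, there is a genuine gap in your treatment of the variables of \emph{odd} degree $i$. You require simultaneously that each summand $w_{s}$ have even length and that it have ``exactly $i$ factors from $S$ and none outside''; for odd $i$ these two demands are incompatible, and the incompatibility is fatal rather than cosmetic. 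If the $w_{s}$ are odd-length monomials with disjoint supports, they pairwise anticommute and each squares to zero, so in the expansion of $\bigl(\sum_{s}w_{s}\bigr)^{r}$ every term with a repeated factor vanishes and the surviving terms collapse to $\bigl(\sum_{\sigma\in S_{r}}(-1)^{\sigma}\bigr)w_{1}\cdots w_{r}=0$ for every $r\geq 2$: the multinomial coefficient $r!$ you invoke is replaced by the signed sum $\sum_{\sigma}(-1)^{\sigma}=0$. Concretely, for $k\geq 2$ the monomial $(x_{1}^{1})^{2}$ satisfies the hypotheses of the lemma, yet the evaluation your scheme produces, $x_{1}^{1}\mapsto e_{1}+e_{2}$, gives $(e_{1}+e_{2})^{2}=0$.

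The missing device — and it is precisely the one the paper uses — is to pad each odd-degree summand with a fresh generator of homogeneous degree $0$: evaluate $x_{t}^{i}\mapsto\sum_{w}ww^{\ast}$, where $w^{\ast}=1$ if $i$ is even, and $w^{\ast}=e_{\ast}$ with $e_{\ast}\in\{e_{k+1},e_{k+2},\ldots\}$ (a different one for each summand, available in infinite supply) if $i$ is odd. Since $\|e_{\ast}\|^{k^{\ast}}=0$, each summand still lies in the component $A_{i}$, but its length is now even, so the summands are central, commute with each other, and no longer square to zero; the multinomial computation then legitimately yields $\prod_{i,t}r_{t}^{i}!\cdot m$ with $m$ a nonzero basis monomial, and $p>r$ finishes the argument. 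With this correction — replacing your claim that the $w_{s}$ can be chosen of even length inside $A_{i}$ using generators of $S$ only — the rest of your proposal (the disjointness bookkeeping and the coefficient count) goes through exactly as in the paper.
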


\begin{proof}
For every $i=1$, \dots, $k$ and $t=1$, \dots, $l_i$ we consider the subsets $ S_{i}^{r_{t}^i}\subset\{e_{1},\ldots, e_{k}\}$ such that $|S_{i}^{r_{t}^i}|=ir_{t}^{i}$. Since $\sum_{i=1}^{k}i(r_{1}^{i}+\cdots + r_{l_i}^{i})\leq k$ we can construct such sets so that $S_{i}^{r_{t}^i}\cap S_{j}^{r_{s}^j}=\emptyset$ for every $i$, $j=1$, \dots, $k$, $t=1$, \dots $l_i$, and $s=1$, \dots, $l_j$. Now let $M_{i}^{r_{t}^{i}}$ be a set of $r_{t}^{i}$ monomials of length $i$ with pairwise disjoint supports whose factors belong to $S_{i}^{r_{t}^i}$, respectively.

Consider the evaluation $\varphi\colon F\langle X|\mathbb{Z} \rangle\rightarrow E$ given by:
$x_{t}^{i}\mapsto \sum_{w\in M_{i}^{r_{t}^{i}}}ww^{\ast}$ 
where 
\[
w^{\ast}=\begin{cases} 1,\text{ if $i$ is an even integer } \\ e_{\ast}, \text{ if $i$ is an odd integer} 
\end{cases},
\]
and, in the second case, we choose different generators $e_{\ast}$ outside of $\{e_{1},\ldots, e_{k}\}$ (there exist infinitely many such generators to choose from).

In this case it follows that 
\[
\varphi((x_{1}^{1})^{r_{1}^1}\cdots (x_{l_1}^{1})^{r_{l_1}^1}\cdots (x_{1}^{k})^{r_{1}^k}\cdots (x_{l_k}^{k})^{r_{l_k}^k})=\prod_{i=1}^{k}\prod_{t=1}^{l_i}{r_{t}^{i}\,!}\, m
\]
where $m$ is a nonzero monomial. Since $p>r$ we have that the last evaluation does not vanish. 
\end{proof}

Define the set $D=\{(r_{1}^{1},\ldots, r_{l_1}^{1},\ldots, r_{1}^{k},\ldots, r_{l_k}^{k})\in\mathbb{N}_{0}\times\cdots\times\mathbb{N}_{0}\}\mid  
\sum\limits_{i=1}^{k}i(r_{1}^{i}+\cdots + r_{l_i}^{i})\geq k+1\}$. 

Let $m= (x_{1}^{1})^{r_{1}^1}\cdots (x_{l_1}^{1})^{r_{l_1}^1}\cdots (x_{1}^{k})^{r_{1}^k}\cdots (x_{l_k}^{k})^{r_{l_k}^k}$ where $(r_{1}^{1},\ldots, r_{l_1}^{1},\ldots, r_{1}^{k},\ldots, r_{l_k}^{k})\in D$. Evaluating $m$ on the $e_i\in E^{k^*}$ we have to repeat at least one among $e_1$, \dots, $e_k$ thus vanishing $m$. (It is also easy to see that $m$ is moreover a consequence of the "trivial" identities of the type $x$ with $\alpha(x)\notin\{0,\ldots, k\}$.) 

\begin{theorem}\label{pricipal theo}
Let $p>2$ be a prime and let $k\in\mathbb{N}_{0}$. Over an infinite field $F$ of characteristic $p>k$, all $\mathbb{Z}$-graded polynomial identities of $E^{k^\ast}$ are consequences of the graded identities:
\begin{itemize}
\item 
$x$, if $\alpha(x)\notin\{0,1,\ldots, k\}$;
\item 
$[x_{1},x_{2},x_{3}]$, for every choice of the degrees $\alpha(x_1)$, $\alpha(x_{2})$, $\alpha(x_{3})$.
\end{itemize}
On the other hand, if $p\leq k$ all $\mathbb{Z}$-graded polynomial identities of $E^{k^\ast}$ are consequences of the above two types of graded identities and the identity:
\begin{itemize}
\item $(x^{t})^p$, if $pt\leq k$, for $t\in\{1,\ldots, k\}$.
\end{itemize}
\end{theorem}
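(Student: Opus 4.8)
The strategy is to reduce the multihomogeneous case to the multilinear situation already handled in characteristic $0$ (Proposition~\ref{correspondencia} and the characteristic-$0$ theorem), using the structural identities from Theorem~\ref{TGP} together with the new $p$-th power identities. I would first establish that over an infinite field it suffices to prove that every multihomogeneous $\mathbb{Z}$-graded identity $f$ of $E^{k^\ast}$ is a consequence of the listed identities. As before, the identity $x$ with $\alpha(x)\notin\{0,\ldots,k\}$ lets me assume every variable has degree in $\{0,1,\ldots,k\}$, and the triple commutator $[x_1,x_2,x_3]$ forces $f$ to be equivalent to a $0$-proper polynomial, indeed to a linear combination of products of commutators modulo $I$, by Theorem~\ref{TGP}. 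Using Theorem~\ref{TGP}(2) (that $[x_1,x_2][x_1,x_3]$ and the mixed relation are consequences), I would argue that any surviving product of commutators can be rewritten so that each variable occurs in a controlled pattern; the degree-$0$ variables behave exactly as in the ungraded Grassmann algebra.

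\textbf{Reduction via $p$-th powers.} The key new input is the $p$-th power identities. I would argue in two regimes according to $p$ versus $k$. When $p>k$, Lemma~\ref{lem 4.1} directly shows that any monomial $T$ with $\sum_i i(r_1^i+\cdots+r_{l_i}^i)\le k$ and all exponents below $p$ is \emph{not} an identity; since $p>k$ automatically bounds all relevant exponents (a product landing in a component $A_t$ with $t\le k$ cannot involve too many factors of positive degree), every multihomogeneous polynomial supported on degrees $\le k$ reduces to the multilinear case, and the characteristic-$0$ argument applies verbatim. When $p\le k$, the situation is genuinely different: a variable $x$ of degree $t$ can be raised to the $p$-th power without the support immediately forcing a repetition among $e_1,\ldots,e_k$, precisely when $pt\le k$. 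Here the identity $(x^t)^p$ (where I read $x^t$ as a variable of degree $t$) becomes essential, and I would show that modulo this identity one may assume every exponent $r$ of a degree-$t$ variable with $pt\le k$ satisfies $r<p$, while for $pt>k$ the constraint $\sum_i i r_i\le k$ already bounds the exponent.

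\textbf{The nonvanishing evaluation.} Once $f$ is reduced to a linear combination of multihomogeneous $0$-proper monomials with all exponents bounded appropriately, I would invoke Lemma~\ref{lem 4.1} to exhibit, for each surviving monomial, a graded evaluation on $E^{k^\ast}$ that does not vanish; the factorial coefficient $\prod_i\prod_t r_t^i!$ is nonzero precisely because every exponent is $<p$, which is exactly what the reductions guarantee. Combining the nonvanishing evaluations with the disjoint-support construction of Lemma~\ref{market1} (adapted to the multihomogeneous setting), I conclude that a reduced $f$ vanishing on $E^{k^\ast}$ must be the zero polynomial, so the original $f$ lies in the ideal generated by the listed identities.

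\textbf{Main obstacle.} I expect the principal difficulty to be the bookkeeping in the regime $p\le k$: one must carefully separate the variables of degree $t$ according to whether $pt\le k$ or $pt>k$, and verify that the identity $(x^t)^p$ together with the degree restriction $\sum_i i r_i\le k$ suffices to drive every exponent below $p$ simultaneously, so that the factorial coefficient in Lemma~\ref{lem 4.1} never vanishes. A secondary technical point is confirming that passing from multilinear to multihomogeneous polynomials does not introduce consequences beyond those captured by Theorem~\ref{TGP} and the $p$-th power identities; this requires the standard but delicate argument that over an infinite field the $T_{\mathbb{Z}}$-ideal is determined by its multihomogeneous components and that the commutator relations of Theorem~\ref{TGP} remain available after specializing degrees.
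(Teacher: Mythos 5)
Your overall skeleton matches the paper's proof quite closely: restrict variable degrees to $\{0,\ldots,k\}$ via the identity $x$, pass to multihomogeneous $0$-proper polynomials, use $[x_1,x_2,x_3]$ and part (2) of Theorem~\ref{TGP} to reach a canonical form in which each summand is a product of powers of variables followed by a product of commutators that is multilinear in the commutator entries, bound the exponents (automatically when $p>k$, since $\sum_i i\,r_t^i\le k<p$; via $(x^t)^p$ when $pt\le k$; and via $r\le k/t<p$ when $pt>k$, with monomials of homogeneous degree exceeding $k$ absorbed by the degree identities), and finish with the evaluations of Lemma~\ref{lem 4.1}. Your regime bookkeeping for $p\le k$ is correct and in fact more explicit than the paper, whose written proof only carries out the case $p>k$. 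Two smaller corrections: $0$-properness is not forced by the triple commutator but by the standard substitution $z\mapsto z+1$ for degree-$0$ variables (legitimate because $1\in A_0$), and your appeal to Lemma~\ref{market1} is misplaced, since that lemma is a characteristic-$0$ statement about multilinear ordinary identities of $E$.

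The one genuine gap is the closing step. Exhibiting, for each reduced monomial separately, a graded evaluation on which it does not vanish shows only that each monomial individually is a nonidentity; it does not show that a nontrivial \emph{linear combination} of distinct reduced monomials is a nonidentity, which is what the theorem requires. The paper closes this by constructing, for each summand, a single substitution that is nonzero on that summand and annihilates all the others: the evaluation $z_i\mapsto e_{k+i}$ combined with the central substitutions of Lemma~\ref{lem 4.1} kills every summand in which a positive-degree variable sits inside a commutator, isolating the summand $f_1$ with all such variables outside commutators; one then varies the substitution, as in \cite{centroneP}, to isolate the remaining summands one at a time and conclude all coefficients vanish. Alternatively, your multilinearization remark can be upgraded into a complete close: once every exponent is $<p$, identifying variables in the full multilinearization recovers $f$ up to the invertible scalar $\prod_{i,t} r_t^i!$, so it suffices to treat multilinear identities via Proposition~\ref{correspondencia} — but then you must verify that Lemma~\ref{market1} and Proposition~\ref{correspondencia} remain valid over an infinite field of characteristic $p>2$ (they do: the disjoint-support evaluation criterion is characteristic-free, and part (1) of Theorem~\ref{TGP} holds over any infinite field), a verification the paper sidesteps by arguing directly at the multihomogeneous level. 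Either repair is routine, but as written your proposal asserts the conclusion without supplying the separating substitutions.
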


\begin{proof}
Suppose that $p>k$. Let $f(x_{1},\ldots, x_{n})$ be a $\mathbb{Z}$-graded polynomial identity for $E^{k^\ast}$. We shall assume that $f$ is multihomogeneous, 0-proper, and that the degree of every variable of $f$ belongs to $\{0,1,\ldots,k\}$. Due to the (graded) identity $[x_{1}, x_{2}, x_{3}]$ we may assume that all the commutators appearing in $f$ are of length 2, that is of the form $[x_{a}, x_{b}]$. Hence $f$ can be reduced to a linear combination of the type 
\[
\sum\beta x_{i_1}^{1}\cdots x_{i_l}^{1}\cdots x_{u_1}^{k}\cdots x_{u_p}^{k}[z_{1},z_{2}]\cdots [z_{s-1},z_{s}][x_{j_1}^{1},x_{j_2}^{1}]\cdots [x_{v_1}^{k},x_{v_2}^{k}]
\]
where the indices are ordered. Note that we are supposing that $s$ is an even number. The case where $s$ is odd is treated in a similar manner. 

Let $h_{i}$ be the number of different variables of degree $i$ appearing in $f$ and suppose that for any $t=1$, \dots, $h_{i}$, we have $\deg_{x_{t}^{i}}(f)=r_{t}^{i}$. By Theorem \ref{TGP} we can assume that $f$ is multilinear in the commutators. Hence we write each summand as
\[
 (x_{1}^{1})^{d_{1}^{1}}\cdots (x_{h_1}^{1})^{d_{h_1}^{1}}\cdots (x_{1}^{k})^{d_{1}^{k}}\cdots (x_{h_k}^{k})^{d_{h_k}^{k}}[z_{1},z_{2}]\cdots [z_{s-1},z_{s}][\overline{x_{1}^1}, \ast]\cdots [\ast, \overline{x_{d_k}^k}].
 \]
Here $d_{t}^{i}\in \{r_{t}^{i}, r_{t}^{i}-1\}$, the notation $\overline{x}$ means that the variable $x$ can be absent,  and the indices are ordered. We may assume that $\sum_{i=1}^{k}i(r_{1}^{i}+\cdots + r_{l_i}^{i})\leq k$. 

Suppose on the contrary that the previous polynomials are linearly dependent modulo $T_{\mathbb{Z}}(E^{k^\ast})$. Then there must exist nonzero coefficients such that
\[
\displaystyle\sum\beta (x_{1}^{1})^{d_{1}^{1}}\cdots (x_{h_1}^{1})^{d_{h_1}^{1}}\cdots (x_{1}^{k})^{d_{1}^{k}}\cdots (x_{h_k}^{k})^{d_{h_k}^{k}}[z_{1},z_{2}]\cdots [z_{s-1},z_{s}][\overline{x_{1}^1}, \ast]\cdots [\ast, \overline{x_{d_k}^k}]\in T_{\mathbb{Z}}(E^{k^\ast}).
\]
Consider the substitution $\varphi\colon F\langle X|\mathbb{Z}\rangle\rightarrow E$ given by $z_{i}\mapsto e_{k+i}$ 
while for the remaining variables we repeat the construction of Lemma \ref{lem 4.1}.  
In the substitution of Lemma \ref{lem 4.1} we replace the variables for central elements. Consequently, the summand
\[
f_{1}=(x_{1}^{1})^{r_{1}^{1}}\cdots (x_{h_1}^{1})^{r_{h_1}^{1}}\cdots (x_{1}^{k})^{r_{1}^{k}}\cdots (x_{h_k}^{k})^{r_{h_k}^{k}}[z_{1},z_{2}]\cdots [z_{s-1},z_{s}]
\]
will be the unique non-vanishing summand of $f$ under the substitution $\varphi$. Hence the coefficient of $f_{1}$ is zero. Using the same argument that was used in \cite{centroneP} we have a  substitution that vanishes all but one given summand of the above linear combination. Thus chosen a non-zero coefficient in it there is a substitution which will send the corresponding monomial to a nonzero element and all remaining monomials vanish. Hence all coefficients are 0, and we are done. 
\end{proof}

In order to describe the $\mathbb{Z}$-graded polynomial identities of $E^{\infty}$ in positive characteristic we note that Lemma \ref{lem 4.1} also holds for $E^{\infty}$. Therefore we have the following lemma.

\begin{lema}\label{lem 4.2}
Consider the graded monomial $T=(x_{1}^{1})^{r_{1}^1}\cdots (x_{h_1}^{1})^{r_{h_1}^1}\cdots (x_{1}^{k})^{r_{1}^k}\cdots (x_{h_k}^{k})^{r_{h_k}^k}$ and denote by $r=\max\{r_{1}^{1},\ldots, r_{h_k}^{k}\}$. If $p>r$  then $T$ is not a $\mathbb{Z}$-graded identity of $E^{\infty}$. 
\end{lema} 
\begin{proof}
We first observe that each homogeneous component of non-negative degree in $E^\infty$ has infinitely many nonzero products of generators $e_i$ of even, respectively of odd, length, and with disjoint supports. Using this it is immediate to exhibit a substitution which does not vanish $T$. 
\end{proof}

By using the above Lemma we describe the $T_{\mathbb{Z}}$-ideal of $E^{\infty}$.

\begin{theorem}
Over an infinite field $F$ of characteristic $p>2$, the $\mathbb{Z}$-graded polynomial identities of $E^{\infty}$ are consequences of the graded identities:
\begin{itemize}
\item $[x_{1},x_{2},x_{3}]$, for every choice of the degrees $\alpha(x_{1})$, $\alpha(x_{2})$, $\alpha(x_{3})$;
\item $x$, if $\alpha(x)<0$;
\item $x^{p}$, if $\alpha(x)\geq 1$. 
\end{itemize}
\end{theorem}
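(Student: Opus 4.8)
The plan is to mirror the structure of the characteristic-0 theorem for $E^\infty$ together with the positive-characteristic argument already developed for $E^{k^\ast}$ in Theorem~\ref{pricipal theo}, since $E^{\infty}$ is the ``$k=\infty$'' analogue where no component beyond degree~$0$ is eventually exhausted. First I would let $J$ denote the $T_{\mathbb{Z}}$-ideal generated by the three listed families of identities and verify the easy inclusion $J\subseteq T_{\mathbb{Z}}(E^{\infty})$: the triple commutator vanishes because $E^{\infty}$ (like every Grassmann grading) satisfies $[x_1,x_2,x_3]$ by Theorem~\ref{TGP}; the variable $x$ with $\alpha(x)<0$ vanishes because the support of the grading is contained in $\mathbb{Z}_{\ge 0}$; and $x^p$ with $\alpha(x)\ge 1$ vanishes by the same reasoning as in the Proposition preceding Theorem~\ref{pricipal theo}, namely that a homogeneous element of positive degree lies in the non-unital part of $E$ and hence is $p$-nilpotent (\cite{agpk, regevgfa}).

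For the reverse inclusion I would take a multihomogeneous, $0$-proper graded polynomial $f\in T_{\mathbb{Z}}(E^{\infty})$ and reduce it modulo $J$ to a normal form. Using $x$ with $\alpha(x)<0$ I discard any variable of negative degree, so all variables have degree $\ge 0$. Using the triple commutator identity, exactly as in the proof of Theorem~\ref{pricipal theo}, I may assume every commutator occurring in $f$ has length~$2$, and by Theorem~\ref{TGP}(2) that $f$ is multilinear in its commutators. The decisive simplification is the identity $x^p$ for $\alpha(x)\ge 1$: it lets me assume that in every monomial each variable of positive degree occurs to a power strictly less than $p$. Variables of degree~$0$ are, since $f$ is $0$-proper, confined to the commutators, and there they appear multilinearly. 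Thus $f$ reduces modulo $J$ to a linear combination of monomials of the shape
\[
(x_{1}^{1})^{d_{1}^{1}}\cdots (x_{h_k}^{k})^{d_{h_k}^{k}}[z_{1},z_{2}]\cdots [z_{s-1},z_{s}][\overline{x}, \ast]\cdots[\ast,\overline{x}],
\]
with every exponent on a positive-degree variable bounded by $p-1$ and the degree-$0$ variables $z_i$ appearing only inside commutators.

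It then remains to show that these surviving monomials are linearly independent modulo $T_{\mathbb{Z}}(E^{\infty})$, which forces every coefficient of $f$ to be zero. Here is where Lemma~\ref{lem 4.2} does the work: because each positive exponent is $<p$, the relevant factorials $r_t^i!$ are invertible in $F$, and because $E^\infty$ has, in each homogeneous component of non-negative degree, infinitely many products of generators of either parity with pairwise disjoint supports, I can build for any prescribed monomial an evaluation (central elements for the powered variables, suitable disjoint-support products inside the commutators) that is nonzero on that monomial while annihilating all the others. This separating-substitution step is the main obstacle: unlike the $E^{k^\ast}$ case there is no capacity bound $\sum i\,r_t^i\le k$ to respect, so the bookkeeping is actually easier, but one must be careful that the chosen supports remain pairwise disjoint across all monomials simultaneously and that the parity constraints coming from $E^{\infty}$ (even versus odd length) are met for each commutator slot; this is exactly the flexibility guaranteed by the first sentence of the proof of Lemma~\ref{lem 4.2}. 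Repeating the argument of Theorem~\ref{pricipal theo} monomial by monomial yields that all coefficients vanish, so $f\in J$ and the two $T_{\mathbb{Z}}$-ideals coincide.
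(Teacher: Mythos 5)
Your proposal is correct and follows essentially the same route as the paper: the paper's proof of this theorem is literally ``one uses the same argument as Theorem~\ref{pricipal theo}'', i.e.\ the reduction of a multihomogeneous $0$-proper identity to a linear combination of monomials in powers (with exponents $<p$) of positive-degree variables times commutators, followed by separating substitutions built from Lemma~\ref{lem 4.2}. Your observations that the capacity constraint $\sum i\,r_t^i\le k$ disappears and that the disjoint-support, parity-adjusted evaluations exist in every non-negative component are exactly the points the paper delegates to Lemma~\ref{lem 4.2}, so nothing is missing.
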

\begin{proof}
One uses the same argument as of Theorem \ref{pricipal theo}. 
\end{proof}

\section{Graded identities of $E^{k}$ in characteristic zero}
\label{sectEk}

In this section we assume the base field of characteristic 0. Recall that the grading $E^k$ is defined by $\|e_i\|=0$ for $1\le i\le k$, and $\|e_i\|=1$ if $i\ge k+1$. In this section we shall stick to the notation $\|\cdot \|$ for the homogeneous degree of an element in a graded algebra. As mentioned earlier we can consider multilinear 0-proper polynomials only. We follow ideas from \cite[Sections 5--8]{disil}. Some of the proofs that are completely analogous to those from \cite{disil} will be omitted. 

We denote by $\Gamma_{l,m}$ the vector subspace of $V_{m,l}$ consisting of the 0-proper multilinear polynomials in the variables $y_1$, \dots, $y_l$ of degree 0 and in the variables $z_1$, \dots, $z_m$ of fixed but arbitrary positive degrees $d_1$, \dots, $d_m$. The notation $\Gamma_{m,l}$ instead of $\Gamma_{l,m}$ could have been more consistent with $V_{m,l}$ but we prefer inverting the indices $l$ and $m$ in order to keep the similarity with that of \cite{disil}. The following proposition is taken from Lemmas 12 and 13 of \cite{disil}. As above we denote by $I$ the ideal of graded identities generated by the triple commutators $[u_1,u_2,u_3]$ for every choice of the degrees of the $u_i$. 

\begin{proposition}
	\label{gamma_lm}
	The vector space $\Gamma_{l,m}$ is spanned, modulo $I$, by the elements:
	
	a) $z_{i_1}\cdots z_{i_m} [y_1,y_2]\cdots [y_{l-1},y_l]$, if $l$ is even. It follows that for every $f\in \Gamma_{l,m}$ one has 
	\[
	f(y_1,\ldots,y_l,z_1,\ldots,z_m) = g(z_1,\ldots,z_m)[y_1,y_2]\cdots [y_{l-1},y_l] \pmod{I}
	\]
	for some polynomial $g\in \Gamma_{0,m}$.
	
	b) $z_{i_1}\cdots z_{i_{m-1}} [z_{i_m},y_1][y_2,y_3]\cdots [y_{l-1},y_l]$, if $l$ is odd and $m\ge 1$. This means that for each $f\in \Gamma_{l,m}$ one has 
	\[
	f(y_1,\ldots,y_l,z_1,\ldots,z_m) = g(z_1,\ldots,z_m, y_1)[y_2,y_3]\cdots [y_{l-1},y_l] \pmod{I}
	\]
	for some polynomial $g\in \Gamma_{1,m}$.
\end{proposition}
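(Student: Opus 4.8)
The plan is to adapt the Krakowski--Regev style reduction in its graded incarnation, exactly as in Lemmas 12 and 13 of \cite{disil}; the only new point to check is that the $\mathbb{Z}$-degrees attached to the variables never obstruct the commutator manipulations, and indeed they do not, since every relation used below is a formal consequence of the triple commutator and hence lies in $I$ for every choice of degrees. First I would record the three relations valid modulo $I$: a commutator of length $\ge 2$ is central (so all commutators may be taken of length $2$), $[a,b][a,c]\equiv 0$, and $[a,b][c,d]\equiv -[a,c][b,d] \pmod{I}$. The last two are the polynomials of Theorem~\ref{TGP}(2); as they are formal consequences of $[u_1,u_2,u_3]$, they hold in $I$ for variables $a,b,c,d$ of arbitrary $\mathbb{Z}$-degree.

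Next I would put an arbitrary $f\in\Gamma_{l,m}$ into a normal form. Since $I$ is generated by the triple commutators, $f$ is congruent modulo $I$ to a linear combination of monomials
\[
z_{j_1}\cdots z_{j_a}\,[w_1,w_1']\cdots[w_b,w_b'],
\]
a product of positive-degree variables standing outside times length-$2$ commutators. Because $f$ is multilinear each variable occurs once, and because $f$ is $0$-proper each $y_i$ occurs inside a commutator; hence every commutator above has one of the shapes $[y,y]$, $[y,z]$ or $[z,z]$. Writing $p$ for the number of $[y,y]$-commutators and $q$ for the number of $[y,z]$-commutators, one has $2p+q=l$, so $q\equiv l \pmod 2$.

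The heart of the argument is the pairing step. Whenever two $[y,z]$-commutators occur, say $[y_i,z_k]$ and $[y_j,z_l]$, the relation $[a,b][c,d]\equiv -[a,c][b,d]$ with $a=y_i$, $b=z_k$, $c=y_j$, $d=z_l$ gives $[y_i,z_k][y_j,z_l]\equiv -[y_i,y_j][z_k,z_l]$, trading two $[y,z]$-commutators for one $[y,y]$- and one $[z,z]$-commutator and lowering $q$ by $2$. Iterating, I reach $q=0$ when $l$ is even and $q=1$ when $l$ is odd. In the surviving product the $[z,z]$-commutators and the outer $z$'s contain no $y$; rewriting each $[z,z]$ as a difference $z_kz_l-z_lz_k$ and using centrality of all commutators modulo $I$, these $z$-factors collect into a single polynomial in the $z$'s alone. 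It then remains to normalise the $[y,y]$-part: using anticommutativity of commutators together with $[a,b][a,c]\equiv 0$ and $[a,b][c,d]\equiv -[a,c][b,d]$, any product of $[y,y]$-commutators covering $y_1,\dots,y_l$ once each reduces, up to sign, to the single product $[y_1,y_2]\cdots[y_{l-1},y_l]$. This yields $f\equiv g(z_1,\dots,z_m)[y_1,y_2]\cdots[y_{l-1},y_l]\pmod I$ with $g\in\Gamma_{0,m}$, which is part (a). For the odd case the same relation, now in the form $[y_i,z][y_j,y_k]\equiv[y_i,y_j][y_k,z]$, lets me transport the unique $[y,z]$-commutator so that its $y$-slot is occupied by $y_1$; collecting as before gives $f\equiv g(z_1,\dots,z_m,y_1)[y_2,y_3]\cdots[y_{l-1},y_l]$ with $g\in\Gamma_{1,m}$, which is part (b). The displayed ``it follows'' consequences are then immediate, since every spanning monomial carries the identical commutator tail in the $y$'s while the remaining factors depend only on the $z$'s (and, in the odd case, on $y_1$).

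I expect the main obstacle to be the bookkeeping in this last normalisation: verifying that the repeated applications of $[a,b][c,d]\equiv -[a,c][b,d]$ and $[a,b][a,c]\equiv 0$ really do collapse every admissible product of $y$-commutators to the one canonical tail with a well-defined sign, and, in the odd case, checking that the unpaired $y$ can be transported to $y_1$ uniformly across all monomials of $f$. These are precisely the combinatorial steps carried out in \cite{disil} for the $\mathbb{Z}_2$-graded setting; since our relations are insensitive to the actual degrees, that argument transfers essentially verbatim, and I would cite it rather than reproduce the sign computations.
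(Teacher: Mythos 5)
Your proposal is correct and follows essentially the same route as the paper, which simply imports this statement from Lemmas 12 and 13 of \cite{disil} on the strength of the very observation you make explicit: the reductions via $[x_1,x_2][x_1,x_3]$ and $[x_1,x_2][x_3,x_4]+[x_1,x_3][x_2,x_4]$ are formal consequences of the triple commutator, hence lie in $I$ for every assignment of $\mathbb{Z}$-degrees, so the $\mathbb{Z}_2$-graded argument transfers verbatim. Your reconstruction of the pairing and normalisation steps (reducing the number of mixed commutators modulo $2$, transporting the lone $y_1$ in the odd case, and expanding the $[z,z]$-commutators into $\Gamma_{0,m}$, respectively $\Gamma_{1,m}$) is exactly the content of the cited lemmas, so deferring the sign bookkeeping to \cite{disil} is legitimate.
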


The following corollary then is immediate (see Lemmas 14 and 15 of \cite{disil}). 
\begin{corollary}
	\label{even_odd}
	a) Let $l$ be even and $f\in \Gamma_{l,m}$, then for $l\ge k+1$ we have $f\in T_{\mathbb{Z}}(E^k)$. Moreover for $l\le k$ we have $f\in T_{\mathbb{Z}}(E^k)$ if and only if $g\in T_{\mathbb{Z}}(E^{k-l})$.
	
	b) Similarly if $l$ is odd, $m\ge 1$, and $f\in\Gamma_{l,m}$ we get that for $l\ge k+1$ it holds $f\in T_{\mathbb{Z}}(E^k)$. If $l\le k$ then $f\in T_{\mathbb{Z}}(E^k)$ if and only if $g\in T_{\mathbb{Z}}(E^{k-l+1})$.
\end{corollary}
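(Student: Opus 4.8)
The plan is to feed the normal forms of Proposition~\ref{gamma_lm} into graded evaluations on $E^k$ and read off exactly when the resulting product of a polynomial with a chain of disjoint commutators can survive. Throughout I will use two elementary facts about the Grassmann algebra: for basis monomials $w,w'\in B_E$ one has $ww'=(-1)^{|supp(w)|\,|supp(w')|}w'w$, so $[w,w']=2ww'$ when both $w$ and $w'$ have odd length and $[w,w']=0$ otherwise; and a product of basis monomials is nonzero precisely when their supports are pairwise disjoint. Since the triple commutator is a graded identity of $E^k$ for every choice of degrees, we have $I\subseteq T_{\mathbb{Z}}(E^k)$; hence by Proposition~\ref{gamma_lm}, membership of $f$ in $T_{\mathbb{Z}}(E^k)$ is equivalent to membership of $g\cdot c$, where $c=[y_1,y_2]\cdots[y_{l-1},y_l]$ in case (a) and $c=[y_2,y_3]\cdots[y_{l-1},y_l]$ in case (b). I will treat (a) in detail; case (b) is identical except that one variable of degree $0$, namely $y_1$, has been absorbed into $g$, which is precisely what shifts $k-l$ to $k-l+1$. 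By multilinearity it suffices to evaluate each variable on a basis monomial of $B_E$.

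For the range $l\ge k+1$ I would argue by counting generators. The $y_i$ have degree $0$, so in any graded evaluation they take values in the subalgebra $A_0=\langle e_1,\ldots,e_k\rangle$. A nonzero value of $c$ forces each of the $l$ variables $y_1,\ldots,y_l$ to be an odd monomial in $e_1,\ldots,e_k$ and forces their supports to be pairwise disjoint; hence $c$ consumes at least $l$ distinct generators among $e_1,\ldots,e_k$. As there are only $k<l$ such generators, $c$ vanishes identically and $f\in T_{\mathbb{Z}}(E^k)$. In case (b) the commutator factor already forces $l-1$ pairwise disjoint degree-$0$ generators, and $0$-properness forces the variable $y_1$ inside $g$ to be an odd monomial using one more (an even $y_1$ is central, killing the commutators of $g$), again totalling $l\ge k+1$.

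For $l\le k$ I would establish the two implications separately. For the backward implication, suppose $g\notin T_{\mathbb{Z}}(E^{k-l})$; I realize $E^{k-l}$ as the graded subalgebra of $E^k$ generated by $e_{l+1},e_{l+2},\ldots$, choose an evaluation of the $z_j$ there making $g\ne 0$ (so with support inside $\{e_{l+1},e_{l+2},\ldots\}$), and set $y_i\mapsto e_i$ for $i=1,\ldots,l$. Then $c=2^{l/2}e_1\cdots e_l$ has support $\{e_1,\ldots,e_l\}$ disjoint from that of $g$, so $g\cdot c\ne 0$ and $f\notin T_{\mathbb{Z}}(E^k)$. For the forward implication I would start from an \emph{arbitrary} graded evaluation with $c\ne 0$: its support is a set $T\subseteq\{e_1,\ldots,e_k\}$ with $|T|\ge l$, and for $g\cdot c$ to be nonzero the evaluation of $g$ must have support disjoint from $T$. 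Thus $g$ is being evaluated inside the graded subalgebra generated by $(\{e_1,\ldots,e_k\}\setminus T)\cup\{e_{k+1},e_{k+2},\ldots\}$, a copy of $E^{k'}$ with $k'=|\{e_1,\ldots,e_k\}\setminus T|\le k-l$.

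Here the one point needing a short separate justification is the monotonicity $T_{\mathbb{Z}}(E^{k-l})\subseteq T_{\mathbb{Z}}(E^{k'})$ whenever $k'\le k-l$: this holds because $E^{k'}$ embeds as a graded subalgebra of $E^{k-l}$ (retain $k'$ of the degree-$0$ generators and all the degree-$1$ ones), and graded identities descend to graded subalgebras. Granting this, $g\in T_{\mathbb{Z}}(E^{k-l})\subseteq T_{\mathbb{Z}}(E^{k'})$ forces the evaluation of $g$ to vanish, whence $f=0$ on the chosen evaluation, and $f\in T_{\mathbb{Z}}(E^k)$. I expect this forward direction to be the main obstacle, precisely because the evaluation is not under our control: the commutator factor $c$ may consume strictly more than the minimal $l$ generators, so one cannot simply match $g$ against $E^{k-l}$ but must pass to the possibly smaller $E^{k'}$ and invoke the monotonicity of the $T_{\mathbb{Z}}$-ideals. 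The odd case (b) runs verbatim with $l-1$ generators consumed by $c$, with $E^{k-l+1}$ in place of $E^{k-l}$, and with the $\Gamma_{1,m}$-normal form of $g$ (from Proposition~\ref{gamma_lm}(b)) in place of the $\Gamma_{0,m}$-normal form.
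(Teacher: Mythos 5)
Your proof is correct and is essentially the argument the paper leaves implicit: the paper declares the corollary immediate from Proposition~\ref{gamma_lm}, deferring to Lemmas 14 and 15 of \cite{disil}, whose proofs amount to exactly your reduction to basis-monomial evaluations, the count of pairwise disjoint odd-length degree-zero monomials consumed by the commutator chain, and the realization of $E^{k-l}$ (resp.\ $E^{k-l+1}$) as a graded subalgebra of $E^k$. Two cosmetic remarks only: your labels ``forward'' and ``backward'' are interchanged relative to the implications they actually establish, and the one genuinely delicate point --- that an arbitrary evaluation may consume more than the minimal number of degree-zero generators, so one must pass to $E^{k'}$ with $k'\le k-l$ and invoke the monotonicity $T_{\mathbb{Z}}(E^{k-l})\subseteq T_{\mathbb{Z}}(E^{k'})$ coming from the graded embedding $E^{k'}\hookrightarrow E^{k-l}$ --- is identified and handled correctly.
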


Hence in order to describe the graded identities for $E^k$ it will be sufficient to describe those of $E^{h}$ in $\Gamma_{0,m}$ and in $\Gamma_{1,m}$, for every $m\ge 1$ and $h\ge 0$, and for every choice of the positive integers $d_1$, \dots, $d_m$. Suppose that $m=p+q$ where $p$ and $q$ are non-negative integers, and that $d_1$, \dots, $d_p$ are odd while $d_{p+1}$, \dots, $d_m$ (observe that $p+q=m$) are even. As one would expect, it turns out that the variables $z_{p+1}$, \dots, $z_m$ behave much like the ones of homogeneous degree 0. 

\subsection{Graded identities in $\Gamma_{0,m}$}

The vector space $\Gamma_{0,m}$ is spanned, modulo $I$, by the elements of the type $z_{i_1}\cdots z_{i_r} [z_{j_1}, z_{j_2}]\cdots [z_{j_{t-1}}, z_{j_t}]$ where $t+r=m$ and $t$ is even. Moreover it suffices to consider only the above elements with  $i_1<\cdots<i_r$ and $j_1<\cdots<j_t$. Denote $T=\{j_1,\ldots,j_t\}$, and $f_T = z_{i_1}\cdots z_{i_r} [z_{j_1}, z_{j_2}]\cdots [z_{j_{t-1}}, z_{j_t}]$. 

As in \cite[Definition 16]{disil} one defines the element $g_1(z_1)=z_1$, and for $m\ge 2$, $g_m(z_1,\ldots,z_m) = \sum (-2)^{-|T|/2} f_T$ where the sum runs over all subsets $T\subseteq \{1,\ldots, m\}$ such that $|T|$ is even. 

\begin{proposition}\cite[Proposition 18]{disil}
	\label{g_heven}
	Let $\|z_i\|$ be odd positive integers, $i=1$, \dots, $h+2$. Then the polynomial $g_{h+2}(z_1, \ldots, z_{h+2})$ is a graded identity for $E^h$.
\end{proposition}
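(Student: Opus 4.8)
The plan is to evaluate $g_{h+2}$ directly on homogeneous elements, exploiting the interaction between the $\mathbb{Z}$-grading of $E^h$ and the canonical length grading of $E$. Since $g_{h+2}$ is multilinear, it suffices to substitute $z_i\mapsto w_i$ where each $w_i\in B_E$ is a basis monomial with $\|w_i\|=\|z_i\|$ odd. Each $w_i$ is then homogeneous for the length grading as well, and I will call it \emph{even} or \emph{odd} according to the parity of its length. The observation I would record first is that a monomial $w_i$ of \emph{even} length but odd degree necessarily involves an odd --- hence positive --- number of the degree-zero generators $e_1,\dots,e_h$: indeed the length of $w_i$ is congruent modulo $2$ to the number of generators $e_j$ with $j>h$ occurring in it, and this latter number equals $\|w_i\|$, which is odd.

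Next I would simplify algebraically. In $E$ the even-length elements are central, while two odd-length elements anticommute; hence for the chosen substitution $[w_a,w_b]=0$ unless both $w_a$ and $w_b$ are odd, in which case $[w_a,w_b]=2w_aw_b$. Feeding this into $g_{h+2}=\sum_{|T|\text{ even}}(-2)^{-|T|/2}f_T$, each factor $2$ arising from a commutator cancels the normalising constant, so $g_{h+2}(w_1,\dots,w_{h+2})$ becomes a signed sum of ordered products of the $w_i$. If at most one argument is odd, then no commutator can pair two odd elements, so only $T=\varnothing$ survives and $g_{h+2}(w_1,\dots,w_{h+2})=w_1\cdots w_{h+2}$. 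If instead the set $Q$ of odd positions satisfies $|Q|\ge 2$, then precisely the even subsets $T\subseteq Q$ contribute; each produces, after moving the central even factors out and reordering the odd factors into increasing order, the same ordered monomial up to sign. The claim I must then establish is that the total coefficient of this monomial is $0$, so that $g_{h+2}(w_1,\dots,w_{h+2})=0$ in this case as well.

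It remains to treat the case of at most one odd argument, where I would conclude by a support count. Then at least $h+1$ of the $w_i$ are even, and by the opening observation each of these uses at least one generator from $e_1,\dots,e_h$; in a nonzero product of basis monomials of $E$ these occurrences must have pairwise disjoint supports, which would require $h+1$ distinct generators among $e_1,\dots,e_h$ --- impossible. Hence $w_1\cdots w_{h+2}=0$, and combining the two cases gives $g_{h+2}\in T_{\mathbb{Z}}(E^h)$.

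The main obstacle is the combinatorial identity in the second step: for $|Q|\ge 2$ the signed coefficient of the surviving monomial is $\sum_{T\subseteq Q,\,|T|\text{ even}}(-1)^{|T|/2}\varepsilon(T)$, where $\varepsilon(T)$ is the sign incurred when the $|T|/2$ commutator factors are reordered into increasing order. I would prove that this sum vanishes for $|Q|\ge 2$ by a short transfer-matrix computation that tracks the running parity of the chosen indices (the two-by-two transition already annihilates the state after the second index), or equivalently by exhibiting a sign-reversing involution on the even subsets $T\subseteq Q$. Everything else reduces to the commutation relations in $E$ and the parity observation above, so this cancellation is the only genuinely delicate point.
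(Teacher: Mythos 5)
Your argument is correct and is essentially the proof the paper itself relies on: the paper does not reprove the statement but notes that \cite[Proposition 18]{disil} applies word by word once all $z_i$ have odd homogeneous degree, and your reconstruction --- multilinear reduction to basis monomials, the parity observation that an even-length monomial of odd $\mathbb{Z}$-degree must involve one of $e_1,\dots,e_h$, the case split on the number of odd-length arguments, and the closing support count --- is exactly that argument transplanted to the $\mathbb{Z}$-graded setting. The one step you left sketched is genuinely true and both of your suggested routes work: your parity-tracking transfer matrix is $M=\bigl(\begin{smallmatrix} i & 1\\ 1 & -i \end{smallmatrix}\bigr)$, which satisfies $M^2=0$, so $\sum_{T\subseteq Q,\,|T|\text{ even}}(-1)^{|T|/2}\varepsilon(T)=0$ whenever $|Q|\ge 2$ (alternatively, toggling membership of the two smallest elements of $Q$ gives a sign-reversing involution on the even subsets), which settles the only delicate point.
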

We draw the readers' attention that we require \textsl{all} $z_i$ to be of odd homogeneous degrees. Then the proof of Proposition 18 from \cite{disil} applies word by word to our case. 

A further reduction of the type of identities satisfied by $E^h$ is due. 

\begin{lema}
	\label{further_red}
	Let $q\ge 1$ and suppose that $f\in \Gamma_{0,m}\cap T_{\mathbb{Z}}(E^h)$ is a graded identity for $E^h$. Then $f\equiv f_1z_m + f_2\pmod{I}$ where $f_1\in \Gamma_{0, m-1}\cap T_{\mathbb{Z}}(E^h)$ and $z_m$ appears inside commutators only in $f_2$. Moreover $f_2\in \Gamma_{0,m}\cap T_{\mathbb{Z}}(E^h)$.
\end{lema}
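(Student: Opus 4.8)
The plan is to sort the reduced form of $f$ according to whether the (even-degree) variable $z_m$ sits outside or inside the commutators, and then to isolate the two resulting pieces by a single well-chosen graded substitution for $z_m$.

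First I would invoke the spanning description of $\Gamma_{0,m}$ recalled just above: modulo $I$, $f$ is a linear combination of monomials $z_{i_1}\cdots z_{i_r}[z_{j_1},z_{j_2}]\cdots[z_{j_{t-1}},z_{j_t}]$ with ordered indices and $t+r=m$, $t$ even. In each such monomial $z_m$ is either one of the outside factors --- in which case, by the ordering, it is the rightmost outside factor --- or it occurs inside one of the length-$2$ commutators. Because triple commutators lie in $I$, every commutator is central modulo $I$, so in the first case I may slide $z_m$ past the whole commutator block without introducing a sign, rewriting the monomial as (a word in $z_1,\ldots,z_{m-1}$)$\,z_m$. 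Collecting the first kind of monomials yields $f_1 z_m$ with $f_1\in\Gamma_{0,m-1}$, and collecting the second kind yields $f_2$, in which $z_m$ appears inside commutators only; thus $f\equiv f_1 z_m + f_2\pmod I$ with $f_2\in\Gamma_{0,m}$.

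Next I would exploit that $q\ge 1$ forces $d_m$ to be even. Hence $z_m$ can be evaluated by a central element of $E^h$: since $d_m$ is even, a product $a_m=e_{N+1}\cdots e_{N+d_m}$ of $d_m$ generators of degree $1$ (all of index $>h$) has even support, so it is central in $E$, and it is homogeneous of degree $d_m$. For any fixed graded substitution $z_i\mapsto b_i$ ($1\le i\le m-1$) I choose $N$ so large that the support of $a_m$ is disjoint from all the $supp(b_i)$. Under the combined substitution every commutator containing $z_m$ dies (a commutator with a central entry vanishes), so $f_2$ evaluates to $0$; since $f\in T_{\mathbb{Z}}(E^h)$ and $I\subseteq T_{\mathbb{Z}}(E^h)$, this leaves $f_1(b_1,\ldots,b_{m-1})\,a_m=0$. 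As $a_m$ has support disjoint from the value of $f_1$, right multiplication by $a_m$ is injective on the relevant span of basis monomials, whence $f_1(b_1,\ldots,b_{m-1})=0$. The substitution being arbitrary, $f_1\in T_{\mathbb{Z}}(E^h)$, and then $f_2\equiv f-f_1z_m\pmod I$ lies in $T_{\mathbb{Z}}(E^h)$ as well, which gives both remaining assertions.

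The part that needs the most care is this separation step: I must check that the one substitution simultaneously annihilates $f_2$ and detects $f_1$ faithfully. The first half rests on reducing to length-$2$ commutators modulo $I$, so that a central value of $z_m$ really kills every term of $f_2$; the second half rests on choosing the supports disjoint, so that multiplication by $a_m$ is injective and $f_1 z_m$ cannot vanish for accidental reasons. Both are routine once one notices that the even parity of $d_m$, guaranteed by $q\ge 1$, is exactly what allows $z_m$ to be represented by an element that is simultaneously central and of the prescribed homogeneous degree.
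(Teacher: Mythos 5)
Your proof is correct and takes essentially the same route as the paper's: decompose $f\equiv f_1z_m+f_2\pmod{I}$ by sliding $z_m$ past the central commutators, then substitute for $z_m$ a central homogeneous element of even support (available exactly because $q\ge 1$ makes $d_m$ even) with support disjoint from the other values, which annihilates $f_2$ and forces $f_1\in T_{\mathbb{Z}}(E^h)$. Your explicit check that right multiplication by $a_m$ is injective on monomials of disjoint support merely spells out what the paper's ``taking care of the supports to be disjoint'' is implicitly using.
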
 

\begin{proof}
One uses several times the equality $uz_mz_iv = u[z_m,z_i]v + uz_iz_mv$ where $u$ and $v$ are monomials, and the fact that $[z_m,z_i]$ are central elements. Thus one obtains $f\equiv f_1z_m + f_2\pmod{I}$. 
	We have that $\|z_m\|>0$ is an even integer. There are infinitely many elements of the canonical basis of $E$ in the homogeneous component of degree $\|z_m\|$ whose length is even, thus these elements are central. Substitute $z_m$ for such an element of the canonical basis of $E$, this vanishes $f_2$. If one substitutes now $z_1$, \dots, $z_{m-1}$ for elements of the canonical basis of $E$ and taking care of the supports to be disjoint one gets $f_1\in \Gamma_{0, m-1}\cap T_{\mathbb{Z}}(E^h)$. Then $f_1z_m\in \Gamma_{0,m}\cap T_{\mathbb{Z}}(E^h)$ and this implies the last statement of the lemma.
\end{proof}

\begin{corollary}
	\label{cor_further_red}
	Let $q\ge 1$ and suppose $f\in \Gamma_{0,m}\cap T_{\mathbb{Z}}(E^h)$ is a graded identity for $E^h$. 
	
	If $q$ is even then $f\equiv f'(z_1, \ldots, z_p) [z_{p+1},z_{p+2}] \cdots [z_{m-1},z_m]\pmod{I}$. In case $h\ge q$ then $f'(z_1,\ldots, z_p)$ is a graded identity for $E^{h-q}$ which depends on variables of odd homogeneous degree. 
	
	If $q$ is odd then $f\equiv f'(z_1,\ldots, z_p,z_{p+1}) [z_{p+2},z_{p+3}] \cdots [z_{m-1},z_m]\pmod{I}$. In case $h\ge q-1$ then $f'(z_1,\ldots, z_p)$ is a graded identity for $E^{h-q+1}$ which depends on only one variable of even homogeneous degree. 
\end{corollary}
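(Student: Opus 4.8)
The plan is to argue by induction on $q$, the number of variables of even homogeneous degree among $z_{p+1},\dots,z_m$, handling the two parities together; the whole scheme parallels Corollary~\ref{even_odd}, with Lemma~\ref{further_red} playing for the even positive-degree variables the role that Proposition~\ref{gamma_lm} played for the degree-$0$ variables. The base cases $q\le 1$ are immediate: the displayed product of commutators is then empty and one takes $f'=f$, which by hypothesis is a graded identity for $E^h$; this matches the claim since $h-q=h$ for $q=0$ and $h-q+1=h$ for $q=1$. So assume $q\ge 2$ and that the claim holds for fewer even variables.

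For the inductive step I would apply Lemma~\ref{further_red} to $z_m$ (even, since $q\ge 1$), obtaining $f\equiv f_1z_m+f_2\pmod I$ with $f_1\in\Gamma_{0,m-1}\cap T_{\mathbb{Z}}(E^h)$ and with $z_m$ occurring only inside commutators in $f_2$. The term $f_1z_m$ is treated by the inductive hypothesis applied to $f_1$, which has one fewer even variable; the term $f_2$ is treated by re-pairing the occurrence of $z_m$ with another even variable (available because $q\ge 2$), using the relations $[a,b][c,d]\equiv-[a,c][b,d]$ and $[a,b][a,c]\equiv 0\pmod I$ of Theorem~\ref{TGP} together with $[a,b]=-[b,a]$, and thereby factoring out one commutator of two even variables. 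Throughout, the hypothesis $f\in T_{\mathbb{Z}}(E^h)$ is exploited through disjoint-support evaluations in the spirit of Lemma~\ref{market1}: such evaluations isolate individual monomials and force the coefficients of the ``symmetric'' leftovers---the terms that cannot be written with the even variables paired into commutators---to vanish. Reassembling the two contributions and relabelling the paired variables yields the asserted normal form, namely $f\equiv f'\,[z_{p+1},z_{p+2}]\cdots[z_{m-1},z_m]\pmod I$ with $f'=f'(z_1,\dots,z_p)$ when $q$ is even, and $f\equiv f'\,[z_{p+2},z_{p+3}]\cdots[z_{m-1},z_m]\pmod I$ with $f'=f'(z_1,\dots,z_p,z_{p+1})$ when $q$ is odd.

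It then remains to identify the level on which $f'$ is an identity, and this is the step I expect to be the most delicate; it is the evaluation argument already used for Corollary~\ref{even_odd}. For a commutator $[z_i,z_j]$ of even-degree variables to be nonzero in $E^h$ both variables must be specialized to elements of odd length, and an even-degree element of odd length involves an odd, hence at least one, of the degree-$0$ generators $e_1,\dots,e_h$; moreover $[a,b]=2ab$ for odd-length $a$, $b$. Specializing the $q$ (respectively $q-1$) paired even variables so that each uses exactly one degree-$0$ generator and the supports are pairwise disjoint consumes $q$ (respectively $q-1$) of the generators $e_1,\dots,e_h$ and leaves precisely the generators of an $E^{h-q}$ (respectively $E^{h-q+1}$) for the evaluation of $f'$. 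This gives $f\in T_{\mathbb{Z}}(E^h)$ if and only if $f'\in T_{\mathbb{Z}}(E^{h-q})$ (respectively $T_{\mathbb{Z}}(E^{h-q+1})$), under the hypotheses $h\ge q$ (respectively $h\ge q-1$), and closes the induction. The principal obstacle is the bookkeeping inside the inductive step: verifying that, modulo $I$ and modulo $T_{\mathbb{Z}}(E^h)$, every even variable can be driven into a commutator paired with another even variable (all of them if $q$ is even, all but one if $q$ is odd); this is exactly where the disjoint-support evaluations and the commutator relations must be combined as in Lemmas~14 and 15 of \cite{disil}.
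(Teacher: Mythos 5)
Your overall skeleton --- induction on $q$, Lemma~\ref{further_red} applied to $z_m$ to split $f\equiv f_1z_m+f_2\pmod{I}$, re-pairing of commutators via Theorem~\ref{TGP}, and the final disjoint-support evaluation in which each paired even variable consumes exactly one of $e_1,\ldots,e_h$, shifting the level from $h$ to $h-q$ (resp.\ $h-q+1$) --- is exactly the iteration the paper intends; the paper in fact prints no separate proof, treating the corollary as the straightforward iteration of Lemma~\ref{further_red} in the style of Lemmas 14 and 15 of \cite{disil}. Your base cases $q\le 1$ and your level-shift evaluation argument are correct and agree with the mechanism of Corollary~\ref{even_odd}.

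The gap is in how you dispose of the term $f_1z_m$. After the inductive hypothesis you are left with $f_1'\,C'\,z_m$, where $z_m$ sits \emph{outside} all commutators, and you assert that disjoint-support evaluations ``force the coefficients of the symmetric leftovers to vanish''. They do not. Take $p=h+2$, $q=2$, $m=p+2$, with $d_1,\ldots,d_p$ odd and $d_{m-1},d_m$ even, and consider $f=g_{h+2}(z_1,\ldots,z_{h+2})\,z_{m-1}z_m$. By Proposition~\ref{g_heven} this lies in $\Gamma_{0,m}\cap T_{\mathbb{Z}}(E^h)$ (and $h\ge q$ can be arranged), yet it is congruent modulo $I$ to no polynomial of the form $f'(z_1,\ldots,z_p)[z_{m-1},z_m]$: in the spanning set of ordered products $z_{i_1}\cdots z_{i_r}[z_{j_1},z_{j_2}]\cdots[z_{j_{t-1}},z_{j_t}]$ --- which are linearly independent modulo $I$, e.g.\ by Proposition~\ref{correspondencia} combined with the Krakowski--Regev codimension count $c_n(E)=2^{n-1}$ --- the polynomial $f$ carries the term $z_1\cdots z_m$ with coefficient $1$, whereas every term of $f'[z_{m-1},z_m]$ has $z_{m-1},z_m$ inside a commutator. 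So an identity's ``unpaired even'' coefficients need not vanish: products of identities in fewer even variables by free even variables are precisely such surviving leftovers, and no evaluation argument can kill them. The correct bookkeeping is not to eliminate $f_1z_m$ but to keep it: $f_1$ is an identity with strictly smaller $q$, so $f_1z_m$ is a consequence of identities already covered by the induction, and the displayed normal form is to be extracted only from the part $f_2$ in which $z_m$ occurs inside commutators (where your re-pairing step via Theorem~\ref{TGP} is legitimate). This is also how the corollary is actually used on the way to Theorem~\ref{mainthm_eh}: read literally, its congruence must be understood modulo $I$ together with such products of identities in fewer even-degree variables, i.e.\ as a statement about generation of $\Gamma_{0,m}\cap T_{\mathbb{Z}}(E^h)$, not as a coefficientwise normal form modulo $I$ alone.
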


Here we observe that one cannot apply directly 0-proper identities in this situation (the element 1 does not lie in the corresponding homogeneous components). That is why we have to do the straightforward computation. 

Therefore in order to describe the graded identities for $E^h$ in $\Gamma_{0,m}$ it suffices to know the graded identities for $E^s$ for each $s$, in $\Gamma_{0,m}$ where $q=0$ or 1. Recall that among $d_i=\|z_i\|$ there are $p$ odd and $q$ even positive integers. 

Suppose first $q=0$. As in Definition 19 of \cite{disil} we denote by $J^h$ the ideal of graded identities defined by $I$ and by the polynomial $g_{h+2}(z_1, \ldots, z_{h+2})$ where all $\|z_i\|=d_i$ are odd. Then one has the analogue of Proposition 25 (a) from \cite{disil}, with exactly the same proof. 

\begin{proposition}
	\label{00q}
	Let all $z_i$, $1\le i\le m$ be of odd homogeneous degree. Then for every $m\ge 1$ one has $\Gamma_{0,m}\cap T_{\mathbb{Z}}(E^h) = \Gamma_{0,m}\cap J^h$.
\end{proposition}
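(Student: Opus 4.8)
The plan is to prove the two inclusions separately, the nontrivial one by a basis/dimension argument. The inclusion $\Gamma_{0,m}\cap J^h\subseteq \Gamma_{0,m}\cap T_{\mathbb{Z}}(E^h)$ is immediate, because $J^h\subseteq T_{\mathbb{Z}}(E^h)$: the triple commutators $[u_1,u_2,u_3]$ generating $I$ are graded identities of $E^h$ by Theorem~\ref{TGP}, and $g_{h+2}(z_1,\dots,z_{h+2})$ is a graded identity by Proposition~\ref{g_heven} (all its variables being of odd homogeneous degree, as required). Thus everything reduces to showing $\Gamma_{0,m}\cap T_{\mathbb{Z}}(E^h)\subseteq J^h$.

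For this I would first produce a convenient spanning set of $\Gamma_{0,m}$ modulo $J^h$. Working modulo $I$, Proposition~\ref{gamma_lm} already reduces $\Gamma_{0,m}$ to the span of the monomials $f_T=z_{i_1}\cdots z_{i_r}[z_{j_1},z_{j_2}]\cdots[z_{j_{t-1}},z_{j_t}]$ with $i_1<\cdots<i_r$, $j_1<\cdots<j_t$, $r+t=m$ and $t$ even. Now I would bring in the relation $g_{h+2}\equiv 0\pmod{J^h}$: since the leading term of $g_{h+2}$ is the free product $z_1\cdots z_{h+2}$, this relation (together with its images under renaming the variables) expresses any product of $h+2$ free variables as a combination of terms each containing at least one commutator. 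Applying it repeatedly inside the free part $z_{i_1}\cdots z_{i_r}$ of $f_T$ lowers the number $r$ of free variables whenever $r\ge h+2$, leaving a spanning set $S$ of $\Gamma_{0,m}$ modulo $J^h$ in which the number of free variables is bounded by roughly the supply $e_1,\dots,e_h$ of degree-$0$ generators, exactly as in \cite{disil}.

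The core of the argument is then to show that the elements of $S$ are linearly independent modulo $T_{\mathbb{Z}}(E^h)$. Once this is done, $S$ is simultaneously a spanning set modulo $J^h$ and an independent set modulo the a priori larger ideal $T_{\mathbb{Z}}(E^h)$, and combined with $J^h\subseteq T_{\mathbb{Z}}(E^h)$ this forces $\Gamma_{0,m}\cap J^h=\Gamma_{0,m}\cap T_{\mathbb{Z}}(E^h)$, which is the assertion. Concretely, for each $f_{T_0}\in S$ I would build a graded evaluation on $E^h$ that does not annihilate $f_{T_0}$ while killing every other element of $S$. The device, borrowed from \cite{disil}, is to evaluate each free variable of $f_{T_0}$ on a product containing one of the degree-$0$ generators $e_1,\dots,e_h$, so that it becomes an even-length, hence central, element of $E$, and to evaluate each variable occurring inside a commutator of $f_{T_0}$ on a purely high-degree, odd-length monomial, always with pairwise disjoint supports. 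Since odd-length elements anticommute, $[a,b]=2ab$ on such substitutions, so $f_{T_0}$ becomes a nonzero scalar multiple of a basis element of $E$; on the other hand any variable that is free in $T_0$ but sits inside a commutator of some other $f_{T'}$ has been made central, which annihilates that commutator and hence kills $f_{T'}$.

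The step I expect to be the main obstacle is precisely this separation. Because on odd elements a commutator collapses to twice a product, a single evaluation cannot by itself distinguish $f_{T_0}$ from every $f_{T'}$ having the same variables but a different partition into free and commutator parts; one must organise the evaluations by ordering the $f_T$ according to the size $|T|$ of their commutator part and argue by triangularity. Here the constraint that $|T|$ is even is used in an essential way to handle the extremal case where the number of free variables just reaches the supply of generators $e_1,\dots,e_h$: moving a single free variable into the commutators would violate the parity, so one is always forced to also disturb a central free variable, which produces a vanishing commutator. This bookkeeping is exactly what is carried out in \cite[Proposition 25(a)]{disil}, and, all the variables here being of odd homogeneous degree, that argument transfers word for word.
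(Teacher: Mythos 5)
Your skeleton (easy inclusion via Proposition~\ref{g_heven}, a spanning set modulo $J^h$, separation by graded evaluations) is the right frame, and it is the frame of the argument the paper imports from \cite[Proposition 25(a)]{disil}; the easy inclusion $J^h\subseteq T_{\mathbb{Z}}(E^h)$ is indeed immediate. But your core step is false as stated: the set $S=\{f_T : |T| \text{ even},\ r=m-|T|\le h+1\}$ that your reduction produces is \emph{not} linearly independent modulo $T_{\mathbb{Z}}(E^h)$, and your parity argument for the extremal case $r=h+1$ does not repair this. Concretely, take $h=0$ (so $E^h=E^{can}$), $m=3$, all $d_i$ odd. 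Then $S=\{z_3[z_1,z_2],\ z_2[z_1,z_3],\ z_1[z_2,z_3]\}$, all extremal with $r=1=h+1$; yet $z_3[z_1,z_2]+z_2[z_1,z_3]$ is a graded identity of $E^{can}$: on basis monomials $w_i$ of odd length with pairwise disjoint supports it evaluates to $2w_1w_2w_3-2w_1w_2w_3=0$, while overlapping supports annihilate every term. So two elements of your $S$ are dependent modulo $T_{\mathbb{Z}}(E^h)$ although independent modulo $I$, and no family of evaluations can separate them. The reason your triangularity fails at the extremal level is that an admissible evaluation can make at most $h$ variables central, so when $r=h+1$ one free variable $u$ must be evaluated on an anticommuting element; then all the terms $f_{(T_0\setminus\{j\})\cup\{u\}}$, $j\in T_0$, survive alongside $f_{T_0}$, and they have the same \emph{even} cardinality as $T_0$ — one variable leaves the commutator part while another enters it, so parity is preserved, contrary to your claim.

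What the paper's proof (deferring verbatim to the lemmas preceding \cite[Proposition 25]{disil}) actually uses, and what your sketch is missing, is a finer reduction modulo $J^h$: consequences of $g_{h+2}$ multiplied by the remaining variables and rewritten modulo $I$ yield, besides the elimination of long free parts, genuine linear relations among the borderline terms with $r=h+1$. In the example above one checks directly that, modulo $I$,
\[
z_3[z_1,z_2]+z_2[z_1,z_3] \;=\; \tfrac{2}{3}\,g_2(z_1,z_2)z_3+\tfrac{2}{3}\,g_2(z_1,z_3)z_2-\tfrac{4}{3}\,g_2(z_2,z_3)z_1 ,
\]
so this identity does lie in $J^0$, exactly as Proposition~\ref{00q} predicts. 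The correct strategy is therefore not to prove independence of $S$, but to show that every combination of the $f_T$ killed by all admissible evaluations is a consequence of $g_{h+2}$ modulo $I$; this is organized in \cite{disil} through the triangular change of basis replacing the free part $z_{i_1}\cdots z_{i_r}$ of $f_T$ by $g_r(z_{i_1},\dots,z_{i_r})$, with respect to which the evaluations become (almost) diagonal and the residual relations in the extremal layer are matched against explicit consequences of $g_{h+2}$. As written, your argument would ``prove'' the three monomials above independent modulo $T_{\mathbb{Z}}(E^{can})$, which is false, so the gap is essential and not merely expository.
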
 

Let $q=1$, that is we have in $f(z_1, \ldots,z_m)$ only one variable, $z_m$ of even homogeneous degree $d_m>0$. This case is very similar to that of only one variable $y$ dealt with in Section 7 of \cite{disil}. 

We follow the notation from Section 7 in \cite{disil}. The vector space $\Gamma_{0,m}$ is spanned in this case, modulo $I$, by the polynomials 
\[
f_T'(z_1,\ldots,z_m) = z_{i_1}\cdots z_{i_r} [z_{j_1},z_{j_2}] \cdots [z_{j_t},z_m].
\]
Here $i_1<\cdots<i_r$, $j_1<\cdots<j_t$, $r+t=m$ and $T=\{j_1,\ldots, j_t\}$, $|T|=t$ is an odd integer. 

\begin{proposition}\cite[Section 7]{disil}
	\label{g_hodd}
	Let $\|z_i\|$ be odd integers for $1\le i\le h+1$, and let $\|z_{h+2}\|$ be even. Then the polynomial $[g_{h+1}(z_1, \ldots, z_{h+1}), z_{h+2}]$ is a graded identity for $E^h$. 
	
	If $\|z_i\|$ are odd for $1\le i\le h+2$ and $\|z_{h+3}\|$ is even then $g_{h+1}(z_1, \ldots, z_{h+1})[z_{h+2},z_{h+3}]$ is a graded identity for $E^h$.
\end{proposition}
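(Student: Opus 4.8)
The plan is to prove that both polynomials vanish under every graded evaluation, reducing everything to a pigeonhole count on the $h$ generators of degree $0$. I would evaluate $z_i\mapsto w_i$, where $w_i\in E^h$ is an arbitrary homogeneous element of the prescribed degree $\|z_i\|=d_i$, and decompose each $w_i=c_i+a_i$ into its \emph{central} (even length) and \emph{anticommuting} (odd length) parts in $E$. The first step is to record the value of $g_m$ on such a substitution. Exactly as in the computation behind Proposition~\ref{g_heven} (the proof of Proposition 18 of \cite{disil} transported to our setting), commutators only see the anticommuting parts, $[w_a,w_b]=[a_a,a_b]=2a_aa_b$, which is central; expanding the definition of $g_m$ and cancelling then gives
\[
g_m(w_1,\ldots,w_m) = \prod_{i=1}^m c_i + \sum_{i=1}^m a_i\prod_{j\ne i}c_j,
\]
i.e. $g_m$ retains precisely the part of the product $w_1\cdots w_m$ that is of degree at most $1$ in the anticommuting factors. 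Establishing this identity, and the parity bookkeeping it rests on, is the only delicate point; the remainder is counting.

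The key observation is a parity constraint coming from the grading $E^h$. If $1\le i\le h+1$ then $d_i=\|z_i\|$ is odd, so a basis monomial of $w_i$ has an odd number $d_i$ of factors among the degree-$1$ generators $e_{h+1},e_{h+2},\ldots$; its central part therefore has an \emph{odd}, hence nonzero, number of factors among the degree-$0$ generators $e_1,\ldots,e_h$. Since there are only $h$ such generators and repeated generators annihilate a product in $E$, any product of $h+1$ or more of the $c_i$ vanishes. In particular $\prod_{i=1}^{h+1}c_i=0$, so
\[
g_{h+1}(w_1,\ldots,w_{h+1}) = \sum_{i=1}^{h+1} a_i\prod_{j\ne i}c_j,
\]
and whenever a summand is nonzero its central factor $\prod_{j\ne i}c_j$, being a product of $h$ of the $c_j$, must use \emph{all} of $e_1,\ldots,e_h$.

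With this in hand both assertions follow from the same collision. For the first polynomial, each surviving summand of $g_{h+1}$ has the form $(\text{central})\cdot a_i$, so
\[
[g_{h+1}(w_1,\ldots,w_{h+1}),w_{h+2}] = \sum_{i=1}^{h+1}\Big(\prod_{j\ne i}c_j\Big)[a_i,a_{h+2}] = \sum_{i=1}^{h+1}\Big(\prod_{j\ne i}c_j\Big)2a_ia_{h+2}.
\]
Because $\|z_{h+2}\|$ is even, the anticommuting part $a_{h+2}$ has an odd, hence nonzero, number of degree-$0$ factors; multiplying it by $\prod_{j\ne i}c_j$, which already exhausts $e_1,\ldots,e_h$, repeats a degree-$0$ generator, so every summand vanishes. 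For the second polynomial the computation is identical: $[w_{h+2},w_{h+3}]=2a_{h+2}a_{h+3}$, and since $\|z_{h+3}\|$ is even the factor $a_{h+3}$ again carries a nonzero number of degree-$0$ generators, colliding with the pool $e_1,\ldots,e_h$ already exhausted by $g_{h+1}(w_1,\ldots,w_{h+1})$, whence $g_{h+1}(w_1,\ldots,w_{h+1})[w_{h+2},w_{h+3}]=0$. The main obstacle is thus not the counting but the first step: pinning down the explicit value of $g_m$ on the decomposition $w_i=c_i+a_i$ and keeping the length-parity versus degree-parity bookkeeping straight. Once that evaluation is available, the vanishing is immediate pigeonhole on the $h$ degree-$0$ generators.
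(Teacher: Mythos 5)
Your proof is correct and follows essentially the same route as the paper, which cites Section 7 of \cite{disil}: the evaluation formula $g_m(w_1,\ldots,w_m)=\prod_i c_i+\sum_i a_i\prod_{j\ne i}c_j$ under the central/anticommuting decomposition is exactly the computation behind Proposition 18 of \cite{disil}, and your pigeonhole on $e_1,\ldots,e_h$, together with the observation that a variable of even homogeneous degree has its anticommuting part containing at least one degree-$0$ generator, is precisely how that argument transfers to the $\mathbb{Z}$-graded setting (your parity bookkeeping checks out in both cases). No gaps; this is the intended proof made explicit.
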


We denote by $J_1^h$ the ideal of graded identities generated by $I$ and by the two polynomials from Proposition~\ref{g_hodd}. Then the contents of Section 7 in \cite{disil} transfers literally to our case (substituting the variable $y$ for the corresponding unique variable $z$ of even positive homogeneous degree). In particular Proposition 28 and Lemma 31 from \cite{disil} hold in our case with the due changes. 
We will need explicitly the following proposition.

\begin{proposition}\cite[Proposition 32 (a)]{disil}
	\label{gh_j1}
	Suppose $h\ge 0$ and $m\ge 1$. Let $\|z_i\| = d_i$ be odd positive integers for $1\le i\le m-1$ and let $d_m>0$ be even. Then $\Gamma_{0,m}\cap T_{\mathbb{Z}}(E^h) = \Gamma_{0,m}\cap J_1^h$.
\end{proposition}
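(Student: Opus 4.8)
The plan is to prove the two inclusions separately, the inclusion $\Gamma_{0,m}\cap J_1^h\subseteq \Gamma_{0,m}\cap T_{\mathbb{Z}}(E^h)$ being routine and the reverse one carrying the substance of the argument. For the easy inclusion it suffices to check that $J_1^h\subseteq T_{\mathbb{Z}}(E^h)$. The triple commutators generating $I$ vanish on $E$, hence on the graded algebra $E^h$, while the two generating polynomials $[g_{h+1}(z_1,\ldots,z_{h+1}),z_{h+2}]$ and $g_{h+1}(z_1,\ldots,z_{h+1})[z_{h+2},z_{h+3}]$ are graded identities of $E^h$ by Proposition~\ref{g_hodd}. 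Since $T_{\mathbb{Z}}(E^h)$ is a $T_{\mathbb{Z}}$-ideal it contains all of $J_1^h$, and intersecting with $\Gamma_{0,m}$ gives the inclusion.

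For the reverse inclusion, since we already know $\Gamma_{0,m}\cap J_1^h\subseteq \Gamma_{0,m}\cap T_{\mathbb{Z}}(E^h)$, it is enough to prove that these two subspaces have the same codimension in $\Gamma_{0,m}$; equivalently, to exhibit a family that spans $\Gamma_{0,m}$ modulo $J_1^h$ and whose image in $\Gamma_{0,m}/(\Gamma_{0,m}\cap T_{\mathbb{Z}}(E^h))$ is linearly independent. First I would fix a normal form. By the description preceding this subsection, modulo $I$ the space $\Gamma_{0,m}$ is spanned by the monomials $f_T'(z_1,\ldots,z_m)=z_{i_1}\cdots z_{i_r}[z_{j_1},z_{j_2}]\cdots[z_{j_t},z_m]$ with $i_1<\cdots<i_r$, $j_1<\cdots<j_t$ and $|T|$ odd. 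Using the relations $[x_1,x_2][x_1,x_3]\equiv 0$ and $[x_1,x_2][x_3,x_4]+[x_1,x_3][x_2,x_4]\equiv 0$ of Theorem~\ref{TGP}, together with the two generators of $J_1^h$ (which let one rewrite any product in which $h+1$ of the odd variables occur outside the commutator containing $z_m$ in terms of elements with strictly fewer such variables), I would reduce this spanning set to the $f_T'$ in which the number of odd variables placed outside the commutators does not exceed $h$. This produces an explicit spanning family $\mathcal{B}_h$ of $\Gamma_{0,m}$ modulo $J_1^h$, exactly as in \cite[Section 7]{disil}.

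Then I would show that $\mathcal{B}_h$ is linearly independent modulo $T_{\mathbb{Z}}(E^h)$ by constructing, for each member of $\mathcal{B}_h$, a graded evaluation on $E^h$ that annihilates all the others. This is where the hypothesis is used: exactly one variable, $z_m$, is of even homogeneous degree and all the remaining ones are odd, which is precisely the configuration of the single even variable $y$ treated in Section 7 of \cite{disil}. Consequently the evaluations constructed there transcribe verbatim after substituting $z_m$ for $y$; one only has to replace each abstract argument by a monomial of $E^h$ of the prescribed $\mathbb{Z}$-degree $d_i$. This is possible because every positive homogeneous component of $E^h$ contains infinitely many products of generators and, for a fixed count of degree-$1$ generators, both central and non-central representatives — the central ones obtained by appending low-index generators $e_1,\ldots,e_h$, the non-central ones without them — while disjointness of supports is arranged as usual since infinitely many generators are available. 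Matching the number of independent evaluations with $|\mathcal{B}_h|$ then forces $\dim\Gamma_{0,m}/(\Gamma_{0,m}\cap T_{\mathbb{Z}}(E^h))\ge |\mathcal{B}_h|=\dim\Gamma_{0,m}/(\Gamma_{0,m}\cap J_1^h)$, which together with the first inclusion yields the claimed equality.

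The main obstacle is exactly the construction of these distinguishing evaluations under the constraint that only the $h$ generators $e_1,\ldots,e_h$ are available in degree $0$. This scarcity is what makes $g_{h+1}$ an identity and what caps at $h$ the number of odd variables that may sit outside the commutators in the normal form; ensuring that the set of monomials surviving a given substitution matches the reduced family $\mathcal{B}_h$ is the delicate combinatorial point. Since this bookkeeping is carried out in full in \cite{disil} for the single even variable $y$, and our parity hypotheses reproduce that setting with $z_m$ in the role of $y$, the argument transfers without change.
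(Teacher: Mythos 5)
Your proposal is correct and takes essentially the same route as the paper, which also disposes of the inclusion $J_1^h\subseteq T_{\mathbb{Z}}(E^h)$ via Proposition~\ref{g_hodd} and then obtains the converse by transferring the normal-form and distinguishing-evaluation argument of Section 7 of \cite{disil} word for word, with the single even-degree variable $z_m$ playing the role of the variable $y$ there. (Only a minor slip: your parenthetical on central versus non-central representatives describes the odd-degree variables; for the even variable $z_m$ the roles are reversed, the central representatives needing no degree-$0$ generator and the non-central ones consuming one of $e_1,\ldots,e_h$.)
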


\subsection{Graded identities in $\Gamma_{1,m}$}
This case is quite similar to the one already considered in the previous subsection so we outline only the differences. As in the case of $\Gamma_{0,m}$ one obtains that, modulo $I$, the vector space $\Gamma_{1,m}$ is spanned by the elements of the type
\[
f'_T = z_{i_1}\cdots z_{i_r} [z_{j_1},z_{j_2}] \cdots [z_{j_{t-2}}, z_{j_{t-1}}][z_{j_t},y].
\]
Here as above $i_1<\cdots<i_r$, $j_1<\cdots<j_t$, $T=\{j_1,\ldots,j_t\}$ and $t$ is an odd integer. Denote as above $d_i=\|z_i\|$. The $d_i$ are positive integers. 
As in the previous subsection we have to consider the two possibilities: when all $d_i$ are odd, and when there is exactly one even among them.

Suppose all $d_i$ are odd. Then the whole Section 7 of \cite{disil} applies without any change. We recall very briefly the results from \cite{disil} we need here. The polynomials 
\[
[g_{h+1}(z_1, \ldots, z_{h+1}), y], \qquad g_{h+1}(z_1, \ldots, z_{h+1})[z_{h+2},y]
\]
are graded identities for $E^h$. If we put $J_2^h$ the ideal of graded identities generated by $I$ together with these two polynomials one obtains, repeating word by word the argument from \cite{disil}, the following proposition.

\begin{proposition}\cite[Proposition 32 (a)]{disil}
	\label{gh_j2}
	For $m\ge 1$ and each $h\ge 0$, one has $\Gamma_{1,m}\cap T_{\mathbb{Z}}(E^h) = \Gamma_{1,m}\cap J_2^h$ where all variables $z_i$, $1\le i\le m$ are of odd positive homogeneous degree.
\end{proposition}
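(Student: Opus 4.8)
The plan is to prove the two inclusions separately, the inclusion $\Gamma_{1,m}\cap J_2^h\subseteq\Gamma_{1,m}\cap T_{\mathbb{Z}}(E^h)$ being the routine one. It follows at once from Proposition~\ref{g_hodd}, which guarantees that the two generators $[g_{h+1}(z_1,\ldots,z_{h+1}),y]$ and $g_{h+1}(z_1,\ldots,z_{h+1})[z_{h+2},y]$ of $J_2^h$ beyond $I$ are graded identities of $E^h$, together with the fact that $I\subseteq T_{\mathbb{Z}}(E^h)$ since the triple commutators vanish already on $E$.

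For the reverse inclusion I would transfer verbatim the argument of Section~7 of \cite{disil}. The observation that legitimises the transfer is that every reduction and every evaluation used there depends only on the \emph{parity} of the homogeneous degrees of the variables, and not on their precise $\mathbb{Z}$-values. In our situation all the $z_i$ carry odd homogeneous degree while $y$ has degree $0$, so the parity pattern is exactly that of the $\mathbb{Z}_2$-graded configuration of \cite{disil} in which $y$ is even and the $z_i$ are odd. Concretely, given $f\in\Gamma_{1,m}\cap T_{\mathbb{Z}}(E^h)$, I would first reduce it modulo $I$ to a linear combination of the canonical spanning elements $f'_T$, and then use the two defining polynomials of $J_2^h$ to reduce further to a spanning set of $\Gamma_{1,m}$ modulo $J_2^h$ of the same size as in \cite{disil}. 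Since $J_2^h\subseteq T_{\mathbb{Z}}(E^h)$, it then suffices to show that this reduced spanning set is linearly independent modulo $T_{\mathbb{Z}}(E^h)$: that forces every coefficient of the reduction to vanish, whence $f\in J_2^h$.

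The linear independence is obtained by exhibiting, for each element of the spanning set, a $\mathbb{Z}$-graded evaluation in $E^h$ that separates it from the remaining ones. Here the flexibility of the $\mathbb{Z}$-grading works in our favour: for each $z_i$ of odd $\mathbb{Z}$-degree $d_i$ I substitute a product of generators of $\mathbb{Z}$-degree exactly $d_i$ and of odd length, with supports pairwise disjoint and disjoint from $\{e_1,\ldots,e_h\}$ where needed, while for $y$ I substitute a product of the degree-$0$ generators $e_1,\ldots,e_h$ of the prescribed length parity. Because each homogeneous component of positive $\mathbb{Z}$-degree of $E^h$ contains infinitely many products of generators of either length parity with disjoint supports, these substitutions can always be realised exactly as in \cite{disil}, and they reproduce the same vanishing and non-vanishing pattern.

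The step I expect to be the most delicate is precisely this last one: confirming that the \emph{finite} supply of degree-$0$ generators $e_1,\ldots,e_h$, which form the finite-dimensional Grassmann algebra $E(\langle e_1,\ldots,e_h\rangle)$ sitting inside the component $A_0$, produces the cutoff responsible for the appearance of $g_{h+1}$. It is exactly the exhaustion of these $h$ generators, once $h+1$ odd variables have been coupled to $y$, that makes $[g_{h+1},y]$ and $g_{h+1}[z_{h+2},y]$ identities, and one must check that the $\mathbb{Z}$-graded evaluations respect this combinatorial bound. Once the parity correspondence above is in place, this is handled by the same computation with $g_{h+1}$ as in \cite{disil}, and no genuinely new phenomenon arises.
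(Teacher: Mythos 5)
Your proposal is correct and takes essentially the same route as the paper, which proves this proposition exactly by noting that the two generators of $J_2^h$ beyond $I$ are graded identities of $E^h$ and then repeating the argument of Section~7 of \cite{disil} word by word — the same parity-only transfer, justified as you do by the fact that each positive homogeneous component of $E^h$ contains monomials of either length parity with pairwise disjoint supports, and with the same observation that the finite supply $e_1,\ldots,e_h$ of degree-$0$ generators produces the cutoff giving $g_{h+1}$. One minor slip: for the easy inclusion you should cite the two identities $[g_{h+1}(z_1,\ldots,z_{h+1}),y]$ and $g_{h+1}(z_1,\ldots,z_{h+1})[z_{h+2},y]$ stated just before the proposition (where $y$ has degree $0$), rather than Proposition~\ref{g_hodd}, which concerns $J_1^h$ and a variable of \emph{even positive} degree; the verification is analogous but the evaluation domain of $y$ is different.
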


Now let the $d_i$, $1\le i\le m-1$ be odd and $d_m$ be even. As it was done earlier we can consider, modulo $I$, that $z_m$ appears inside commutators only. Also modulo $I$ one has that $[x_1,x_2][x_3,x_4] = -[x_1,x_3][x_2,x_4]$ (this is an ordinary identity for the Grassmann algebra). Therefore we can suppose that $z_m$ and $y$ are in the same commutator. In this way we have that every polynomial $f$ in $\Gamma_{1,m}$ can be written as 
\[
f(z_1,\ldots,z_m,y) \equiv f'(z_1,\ldots,z_{m-1}) [z_m, y] \pmod{I}.
\]
It follows that $f$ is a graded identity for $E^h$ if and only if $f'$ is one for $E^{h-1}$ for every $h\ge 1$. (Since $d_m$ is even we have to use one of the $h$ basic elements assumed of degree 0 in order to change the parity of $z_m$ under an evaluation, otherwise the commutator vanishes modulo $I$.) All variables $z_1$, \dots, $z_{m-1}$ are of odd positive degree. 

Let $d_i$, $1\le i\le h+1$ be odd and $d_{h+2}$ be even. The graded identities 
\[
[g_{h+1}(z_1,\ldots, z_{h+1}), y], \qquad g_{h+1}(z_1,\ldots, z_{h+1})[z_{h+2},y]
\]
are satisfied by $E^h$. We denote by $J_3^h$ the ideal of graded identities generated by $I$ and by the above polynomials. As in \cite[Proposition 32 (a)]{disil} we obtain
\begin{proposition}
	\label{gh_j3}
	Suppose $m\ge 2$ and $d_i=\|z_i\|$ are odd whenever $1\le i\le m-1$ while $d_m=\|z_m\|$ is even. Then $\Gamma_{1,m}\cap T_{\mathbb{Z}}(E^h) = \Gamma_{1,m}\cap J^h_3$.
\end{proposition}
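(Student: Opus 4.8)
The plan is to assemble the statement from the normal form established in the two paragraphs preceding the proposition together with Proposition~\ref{00q}, adding only one genuinely new lifting argument. The inclusion $\Gamma_{1,m}\cap J^h_3\subseteq \Gamma_{1,m}\cap T_{\mathbb{Z}}(E^h)$ requires no work: $I\subseteq T_{\mathbb{Z}}(E^h)$, and the two generators $[g_{h+1}(z_1,\ldots,z_{h+1}),y]$ and $g_{h+1}(z_1,\ldots,z_{h+1})[z_{h+2},y]$ of $J^h_3$ have already been shown above to be graded identities for $E^h$. So the entire content lies in the opposite inclusion $\Gamma_{1,m}\cap T_{\mathbb{Z}}(E^h)\subseteq \Gamma_{1,m}\cap J^h_3$.

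To establish it, I would take $f\in \Gamma_{1,m}\cap T_{\mathbb{Z}}(E^h)$ and use the normal form obtained just before the proposition: modulo $I$ one has $f\equiv f'(z_1,\ldots,z_{m-1})[z_m,y]$, where $z_1,\ldots,z_{m-1}$ are of odd positive degree and $z_m$ of even positive degree, and moreover (for $h\ge 1$) $f\in T_{\mathbb{Z}}(E^h)$ if and only if $f'\in T_{\mathbb{Z}}(E^{h-1})$. Since $f'$ depends only on the odd-degree variables $z_1,\ldots,z_{m-1}$, it lies in $\Gamma_{0,m-1}$ with all variables of odd degree, so Proposition~\ref{00q} (applied with $m-1$ and $h-1$ in place of $m$ and $h$) gives $f'\in J^{h-1}$; that is, $f'$ is a consequence modulo $I$ of the single polynomial $g_{h+1}(z_1,\ldots,z_{h+1})$. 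The boundary case $h=0$ is handled separately and trivially: on $E^0=E^{can}$ the variable $y$ of degree $0$ can only be evaluated on scalars, so $[z_m,y]$ already vanishes and $f\equiv f'[z_m,y]$ is visibly a consequence of $g_1(z_1)[z_2,y]=z_1[z_2,y]$.

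The remaining and decisive step is to lift the membership $f'\in J^{h-1}$ back to $f\in J^h_3$. Writing $f'$, modulo $I$, as a linear combination of products $u_i\,\hat g_i\,v_i$, where each $\hat g_i$ is a $\mathbb{Z}$-graded substitution of $g_{h+1}$ into odd-degree variables and $u_i$, $v_i$ are monomials in the remaining odd variables (the standard description of a $T_{\mathbb{Z}}$-consequence of a multilinear polynomial, exactly as in \cite{disil}), one gets $f\equiv \sum_i u_i\,\hat g_i\,v_i\,[z_m,y]\pmod I$. Because $I=\langle[x_1,x_2,x_3]\rangle_{T}$ by Theorem~\ref{TGP}, every commutator is central modulo $I$, so $[z_m,y]$ commutes past each $v_i$ and $f\equiv \sum_i u_i\,(\hat g_i[z_m,y])\,v_i\pmod I$. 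Each factor $\hat g_i[z_m,y]$ is a $\mathbb{Z}$-graded substitution of the generator $g_{h+1}(z_1,\ldots,z_{h+1})[z_{h+2},y]$ (send $z_{h+2}\mapsto z_m$ and the $z_1,\ldots,z_{h+1}$ to the odd variables of $\hat g_i$), hence lies in $J^h_3$; multiplying on either side by $u_i$ and $v_i$ keeps it in the $T_{\mathbb{Z}}$-ideal $J^h_3$. Thus $f\in J^h_3$, completing the inclusion.

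I expect the lifting step to be the only real obstacle. The point that must be checked with care is that the rewriting of $f'$ as a consequence of $g_{h+1}$ can be taken in the product form $u_i\hat g_i v_i$, so that appending $[z_m,y]$ and exploiting its centrality modulo $I$ reproduces \emph{exactly} a substitution instance of $g_{h+1}[z_{h+2},y]$, and that all substitutions used are $\mathbb{Z}$-graded. Because the Grassmann relations are sensitive only to the parity of the homogeneous degrees, and the unique even variable $z_m$ has been isolated inside the commutator $[z_m,y]$, the argument runs exactly as in \cite[Proposition 32 (a)]{disil}; the actual integer values of the degrees $d_i$ play no role beyond their parities. It is worth noting that for this inclusion only the second generator $g_{h+1}[z_{h+2},y]$ of $J^h_3$ is used, the first being recorded because it is itself a graded identity of $E^h$ and is needed in the companion reductions.
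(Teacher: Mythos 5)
Your proof is correct and takes essentially the same route as the paper: the paper's entire argument consists of the reduction $f\equiv f'(z_1,\ldots,z_{m-1})[z_m,y]\pmod{I}$ together with the equivalence $f\in T_{\mathbb{Z}}(E^h)$ if and only if $f'\in T_{\mathbb{Z}}(E^{h-1})$, both set up in the paragraph immediately preceding the proposition, followed by a citation of \cite[Proposition 32 (a)]{disil}. You have simply made explicit the lifting step that the paper delegates to that citation, namely writing $f'$ modulo $I$ as $\sum_i u_i\,\hat g_i\,v_i$ via Proposition~\ref{00q} and using the centrality of commutators modulo $I$ to recognize each $\hat g_i[z_m,y]$ as a graded substitution instance of $g_{h+1}(z_1,\ldots,z_{h+1})[z_{h+2},y]$, and your side remarks (the separate treatment of $h=0$, and the fact that only the second generator of $J_3^h$ is needed for the hard inclusion) are accurate.
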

In order to describe the graded identities for the $\mathbb{Z}$-graded algebra $E^k$ we define the ideal $P^k$ of graded identities as the one generated by the following polynomials. 

\begin{enumerate}
	\item
	$x$ whenever $\|x\|<0$.
	\item
	$[x_1, x_2, x_3]$, for every choice of the degrees $\|x_i\|$.
	\item
	$[y_1,y_2]\cdots [y_{k-1},y_k][y_{k+1}, x]$ where $\|x\|$ is arbitrary, if $k$ is even.
	\item
	$[y_1,y_2]\cdots [y_{k-2},y_{k-1}][y_k, y_{k+1}]$, if $k$ is odd.
	\item
	$g_{k-l+2}(z_1, \ldots, z_{k-l+2}) [u_1,u_2] \cdots [u_{l-1}, u_l]$ where $\|z_i\|$ are positive odd integers and $\|u_i\|$ are even integers (some of the $u_i$ may be variables $y$ while the remaining are $z$ of even homogeneous degree). Here $l\le k$ is even.
	\item
	$[g_{k-l+2}(z_1, \ldots, z_{k-l+2}), u_1] [u_2, u_3]\cdots [u_{l-1}, u_l]$ where $\|z_i\|$ are positive odd integers and $\|u_i\|$ are even integers (some of them $y$ and the remaining $z$ with $\|z\|$ even). Here $l\le k$ is odd.
	\item
	$g_{k-l+2}(z_1, \ldots, z_{k-l+2}) [z_{k-l+3}, u_1] \cdots [u_{l-1}, u_l]$ where as above $\|z_i\|$ are positive odd integers and $\|u_i\|$ are even integers (some of them $y$ and the remaining $z$ with $\|z\|$ even). Here $l\le k$ is odd.
\end{enumerate}

Depending on $k$ being even or odd we include in $P^k$ only the corresponding identities in items (3) or (4) above.

\begin{lema}
	\label{easy_incl}
	The polynomials above are graded identities for the $\mathbb{Z}$-graded algebra $E^k$, that is $P^k\subseteq T_{\mathbb{Z}}(E^k)$. 
\end{lema}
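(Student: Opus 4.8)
The plan is to verify the seven families separately, grouping them by the mechanism that makes them vanish. All the polynomials in (1)--(7) are multilinear, so throughout it suffices to substitute basis monomials $e_{j_1}\cdots e_{j_s}$ of $E$ of the prescribed $\mathbb{Z}$-degrees. Items (1) and (2) are immediate: every homogeneous component of $E^k$ of negative degree is zero, so a variable $x$ with $\|x\|<0$ is always sent to $0$, which gives (1); and by Theorem \ref{TGP} the polynomial $[x_1,x_2,x_3]$ is an \emph{ordinary} identity of $E$, hence vanishes under any substitution whatsoever, which gives (2).

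For the ``top'' identities (3) and (4) I would argue by counting the degree-$0$ generators $e_1,\dots,e_k$. Recall that for monomials $a,b$ the commutator $[a,b]$ is nonzero only when both $a$ and $b$ have odd length, in which case $[a,b]=2ab$. A monomial of degree $0$ lies in the subalgebra $\langle e_1,\dots,e_k\rangle$, and if it has odd length it involves an odd, hence positive, number of the $e_1,\dots,e_k$. Thus every nonzero commutator of two degree-$0$ monomials consumes at least two of $e_1,\dots,e_k$, and a product of $t$ such commutators consumes at least $2t$ of them. In case (4), with $k$ odd, the word $[y_1,y_2]\cdots[y_k,y_{k+1}]$ is a product of $(k+1)/2$ commutators, which would require at least $k+1>k$ of the generators $e_1,\dots,e_k$; hence it is identically zero. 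In case (3), with $k$ even, the word $[y_1,y_2]\cdots[y_{k-1},y_k]$ is a product of $k/2$ commutators and is therefore a scalar multiple of $e_1\cdots e_k$, using up all degree-$0$ generators; the extra factor $[y_{k+1},x]$ contributes, through $y_{k+1}$, at least one further generator among $e_1,\dots,e_k$, so the product with $e_1\cdots e_k$ vanishes.

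The substantive part is (5)--(7), which I would handle uniformly by a projection argument. In each case the polynomial factors as a \emph{core} times a tail of commutators of \emph{even}-degree variables: in (5) the core is $g_{k-l+2}(z_1,\dots,z_{k-l+2})$ and the tail is $[u_1,u_2]\cdots[u_{l-1},u_l]$; in (6) the core is $[g_{k-l+2},u_1]$ and in (7) it is $g_{k-l+2}[z_{k-l+3},u_1]$, the tail in both being $[u_2,u_3]\cdots[u_{l-1},u_l]$. The parity observation now reads: a basis monomial of \emph{even} $\mathbb{Z}$-degree that has odd length contains an odd, hence positive, number of factors among $e_1,\dots,e_k$. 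Hence each nonzero even commutator consumes at least two of $e_1,\dots,e_k$, so if the evaluated tail is a nonzero monomial $w$ it involves at least $l$ of these generators in case (5) and at least $l-1$ of them in cases (6) and (7). I would fix $l$ (respectively $l-1$) of the generators occurring in $w$ and let $\pi$ be the linear projection of $E$ annihilating every basis monomial that involves one of them. Then $\pi$ is a $\mathbb{Z}$-graded algebra endomorphism whose image is the graded subalgebra generated by the remaining $e_j$, and this image is isomorphic to $E^{k-l}$ (respectively $E^{k-l+1}$), since exactly $l$ (respectively $l-1$) of the degree-$0$ generators have been removed. Because any monomial of the evaluated core that survives multiplication by $w$ must avoid the chosen generators, the product of the evaluated core with $w$ equals $\pi(\text{core})\cdot w$; and as $\pi$ is an algebra map and the core is multilinear, $\pi(\text{core})$ is the value of the core on the $\pi$-images of its variables, all of which lie in $\operatorname{Im}\pi$. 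By Proposition \ref{g_heven} in case (5), and by Proposition \ref{g_hodd} in cases (6) and (7), the core is a graded identity of $E^{k-l}$ (respectively $E^{k-l+1}$); therefore $\pi(\text{core})=0$ and the product vanishes. As the evaluation on basis monomials was arbitrary and the polynomial is multilinear, this proves it is a graded identity of $E^k$.

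The main obstacle is the bookkeeping concentrated in (5)--(7): one has to match the number of degree-$0$ generators forcibly consumed by the even-commutator tail with the number that must be removed in order to land in the smaller algebra $E^{k-l}$ (or $E^{k-l+1}$) on which Propositions \ref{g_heven} and \ref{g_hodd} already guarantee that the core vanishes, and to check that the projection $\pi$ is a graded homomorphism commuting with the evaluation of the core. Once the parity principle ``even $\mathbb{Z}$-degree together with odd length forces a degree-$0$ generator'' is established, this matching is exactly the arithmetic $l=2\cdot(l/2)$ (respectively $l-1=2\cdot((l-1)/2)$), and everything reduces cleanly to the already proved Propositions \ref{g_heven} and \ref{g_hodd}, while items (1)--(4) are routine.
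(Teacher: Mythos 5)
Your proof is correct and follows essentially the same route as the paper: items (1)--(4) by the parity/counting of the degree-$0$ generators $e_1,\dots,e_k$, and items (5)--(7) by observing that a nonvanishing tail of even-degree commutators forcibly consumes at least $l$ (resp.\ $l-1$) of these generators, reducing the core to a graded identity of $E^{k-l}$ (resp.\ $E^{k-l+1}$) via Propositions~\ref{g_heven} and~\ref{g_hodd}. Your projection $\pi$ merely makes rigorous the step the paper states informally as ``we are left with $k-l$ elements among $e_1,\dots,e_k$,'' so it is a welcome refinement rather than a different argument.
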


\begin{proof}
	The first two types of polynomials are obviously graded identities for $E^k$. If $k$ is even then the polynomial in item 3 is an identity since there are just $k$ linearly independent elements of homogeneous degree 0, namely $e_1$, \dots, $e_k$ (and one cannot put 1 in a commutator without vanishing it). The same argument applies to item 4.   Items 5, 6, 7 are dealt with in a similar manner so we consider item 5 only. In it, if some of the $u_i$ is a variable $y$ of degree 0 then we are left with $k-1$ elements among $e_1$, \dots, $e_k$. But when substituting those among the $u_i$ which are of positive even degree with elements of $E^h$ we have to use for each of them at least one among the $e_1$, \dots, $e_k$. Otherwise the corresponding $z$ will be substituted with a central element of $E$ thus annihilating the commutator.
\end{proof}

Here is the main theorem of the section. Its proof is now a straightforward adaptation of the proof of Theorem 38 in \cite{disil}, and we omit it.

\begin{theorem}
	\label{mainthm_eh}
	The ideal $T_{\mathbb{Z}}(E^k)$ of the $\mathbb{Z}$-graded identities for the algebra $E^k$ coincides with the ideal $P^k$.
\end{theorem}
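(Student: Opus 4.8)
The plan is to prove the equality $T_{\mathbb{Z}}(E^k) = P^k$ by reducing an arbitrary $\mathbb{Z}$-graded identity to the building blocks assembled in the preceding subsections. One inclusion, namely $P^k \subseteq T_{\mathbb{Z}}(E^k)$, is already established in Lemma~\ref{easy_incl}, so the work lies entirely in showing $T_{\mathbb{Z}}(E^k) \subseteq P^k$. Since the field has characteristic $0$, it suffices to treat multilinear polynomials, and by the standard $0$-proper reduction recalled in the Preliminaries I may assume every identity $f \in T_{\mathbb{Z}}(E^k)$ is $0$-proper, i.e.\ each variable of homogeneous degree $0$ occurs only inside commutators. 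Thus $f$ lies in some $V_{m,l}$, and after collecting the degree-$0$ variables $y_1,\ldots,y_l$ and the positive-degree variables $z_1,\ldots,z_m$, the polynomial $f$ belongs to one of the spaces $\Gamma_{l,m}$.

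First I would apply Corollary~\ref{even_odd} to strip off the degree-$0$ commutators: whenever $l \ge k+1$ the identity follows from items (3)/(4) of the generating set (the ``too many degree-zero elements'' identities), so I may assume $l \le k$, and the corollary then rewrites $f$ modulo $I$ as $g \cdot [y_1,y_2]\cdots$ with $g \in \Gamma_{0,m}$ or $\Gamma_{1,m}$ being a graded identity for $E^{k-l}$ (resp.\ $E^{k-l+1}$). This reduces everything to understanding identities in $\Gamma_{0,m}$ and $\Gamma_{1,m}$ for the algebras $E^h$, which is exactly what Propositions~\ref{00q}, \ref{gh_j1}, \ref{gh_j2}, and \ref{gh_j3} accomplish. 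The second reduction, via Corollary~\ref{cor_further_red} and Lemma~\ref{further_red}, peels off the commutators built from the even-degree variables $z_{p+1},\ldots,z_m$, bringing $q$ (the number of even-degree $z$'s) down to $0$ or $1$; each such peeling contributes factors of the shape appearing in items (5), (6), (7), and lowers the relevant index $h$ accordingly.

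Once $q \in \{0,1\}$, the four propositions identify $\Gamma_{0,m}\cap T_{\mathbb{Z}}(E^h)$ and $\Gamma_{1,m}\cap T_{\mathbb{Z}}(E^h)$ with the corresponding intersections $\Gamma \cap J^h$, $\Gamma \cap J_1^h$, $\Gamma \cap J_2^h$, $\Gamma \cap J_3^h$, where each $J$-ideal is generated by $I$ together with the relevant $g_{h+2}$-type polynomial from Proposition~\ref{g_heven} or \ref{g_hodd}. The crux is then a bookkeeping argument: I must check that when the factors removed in the two reductions above are reattached, the generator $g_{h+2}(z_1,\ldots,z_{h+2})$ for the \emph{reduced} algebra $E^h$ (with $h = k-l-q$ or a similar value) reassembles into precisely one of the polynomials listed in items (5)--(7) of the definition of $P^k$, the index shift $h = k - l - q + (\text{correction})$ matching the subscript $k-l+2$ in those generators. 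This indexing coherence between the peeled-off commutator factors and the surviving $g$-polynomial is where I expect the main obstacle to lie; everything else is an application of results transferred from \cite{disil}. Because the grading-group change from $\mathbb{Z}_2$ to $\mathbb{Z}$ affects only the admissible parities of the degrees $d_i = \|z_i\|$ and not the multiplicative structure of $E$, the counting of independent evaluations used in \cite{disil} carries over verbatim, and so the argument of Theorem~38 of \cite{disil} adapts directly. Hence I would conclude $T_{\mathbb{Z}}(E^k) \subseteq P^k$, which together with Lemma~\ref{easy_incl} yields the claimed equality.
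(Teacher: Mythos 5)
Your proposal is correct and takes essentially the same route as the paper: the paper's (omitted) proof of Theorem~\ref{mainthm_eh} is exactly the adaptation of Theorem 38 of \cite{disil} obtained by chaining Lemma~\ref{easy_incl}, the $0$-proper multilinear reduction, Corollary~\ref{even_odd}, Lemma~\ref{further_red} with Corollary~\ref{cor_further_red}, and Propositions~\ref{00q}, \ref{gh_j1}, \ref{gh_j2}, \ref{gh_j3}, which is precisely the reduction you assemble. The index-matching ``bookkeeping'' you flag as the crux is likewise the only part the paper leaves implicit in declaring the adaptation straightforward, so your sketch is, if anything, more explicit than the paper's own argument.
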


\section{Further examples of $\mathbb{Z}$-gradings on $E$}

\subsection{$\mathbb{Z}$-gradings of the form $(r)$, $(\infty)$}

Let $r\in\mathbb{N}$ and put $\|e_{n}\|=r$  for every $n\in\mathbb{N}$, we obtain the $\mathbb{Z}$-grading $E_{(r)}^{(\infty)}=\oplus_{n\in\mathbb{Z}} A_{n}$ such that
\[ 
A_{n}=\begin{cases} 0,\text{ if }  n<r \text{ and }n\neq 0 \\ F, \text{ if } n=0 \\ span_{F}\{w\in B_{E}  \mid  |supp(w)|=m \}, \text{ if } n=rm \\ 0,\text{ if } n>r \text{ and } r\nmid n
\end{cases}.
\]
Observe that this grading has support $C=\{0, r, 2r,\ldots, nr,\ldots\}$.  
We shall describe the $\mathbb{Z}$-graded polynomial identities for this grading. An easy and direct computation shows that the following polynomials are $\mathbb{Z}$-graded polynomial identities of $T_{\mathbb{Z}}(E_{(r)}^{(\infty)})$:
\begin{itemize}
\item $x$, if $\alpha(x)\notin C$;
\item $[x_{1}^{2rn},x_{1}^{rm}]$, for every $n$, $m\in\mathbb{N}_{0}$;
\item $x_{1}^{rn}x_{1}^{rm}+x_{1}^{rm}x_{1}^{rn}$, if both $n$ and $m$ are odd integers. 
\end{itemize}

\begin{example}
Let $E$ be the $\mathbb{Z}$-graded algebra just constructed. The ideal of graded identities $T_{\mathbb{Z}}(E_{(r)}^{(\infty)})$ is generated by the identities above.  The argument repeats word by word that of the natural $\mathbb{Z}$-graded case. 
\end{example}

Let $H$ be the subgroup of $\mathbb{Z}$ generated by the
	given element $r$. Clearly $E$ becomes an algebra graded by $H$, and this new $H$-grading is
	equivalent to the $\mathbb{Z}$-grading $E^{can}$. In this context we say that $E_{(r)}^{(\infty)}$ and $E^{can}$ are equivalent gradings. In fact the graded identities are essentially the ``same'', re-scaling by the multiple $r$. It is also easy to see that in characteristic $p>2$ one has to add to the list of identities of the above theorem the $p$-th powers of the elements $x_1^{nr}$ with $n$ even. Note once again that if $n$ is odd the square of the corresponding variable vanishes thus its $p$th power is 0 as well.

\subsection{$\mathbb{Z}$-gradings of the forms $(p,q)$, $(1,\infty)$, and $(p,q)$, $(k,\infty)$}
Here we give, as examples, the graded identities for the gradings from the title. 

Let $p$, $q$ be primes such that $p<q$. Consider the decomposition 
\[
L=L_{p}^{1}\oplus L_{q}^{\infty}.
\]
We suppose that $\dim L_{p}=1$ and $\dim L_{q}=\infty$. Up to a change of the basis we can assume that $L_{p}$ is the span of $\{e_{1}\}$ and $L_{q}$ is the span of $\{e_{2},e_{3},\ldots\}$. In the above list notation this decomposition provides us with the $\mathbb{Z}$-grading $E_{(p,q)}^{(1,\infty)}=\oplus_{n\in\mathbb{Z}} A_{n}$.

Now we describe its components and its $\mathbb{Z}$-graded polynomial identities. First we define the set
\[
C=\{px+qy\mid x=0,1\text{ and } y\in\mathbb{N}_{0}\}.
\]
We define the following sets:
\begin{align*}
C_{1}=\{qy\mid y\text{ is even}\}, & \quad
C_{2}=\{qy\mid y\text{ is odd } \},\\
C_{3}=\{p+qy\mid y\text{ is even }\}, & \quad
C_{4}=\{p+qy\mid y\text{ is odd }\}.
\end{align*}

Obviously we have that
$$C=C_{1}\cup C_{2}\cup C_{3}\cup C_{4}.$$ 

We denote by $E_{can}=E_{(0)}\oplus E_{(1)}$ the natural $\mathbb{Z}_{2}$-grading on $E$. If $y\in\mathbb{N}$ we  adopt the notation
\[
E^y(e_n)=\text{ the subspace of $E$ spanned by all products of $e_i$'s of length } y \text{ without the factor } e_{n};
\]
\[
E^y[e_n]=\text{ the subspace of $E$ spanned by all products of $e_i$'s of length } y+1 \text{ with the factor } e_{n}.
\]
Using the previous notation we have that the description of the $\mathbb{Z}$-grading $E_{(p,q)}^{(1,\infty)}$ is given by the following rules. Here $A_n$ stands for the homogeneous component of degree $n$.
\[
A_{n}=\begin{cases} 0,\text{ if }  n\notin C \\ F, \text{ if } n=0 \\ E_{(0)}\cap E^{y}(e_{1}), \text{ if } n\in C_{1} \text{ and }n=qy \\ E_{(1)}\cap E^{y}(e_{1}), \text{ if } n\in C_{2} \text{ and }n=qy \\ E_{(1)}\cap E^{y}[e_{1}], \text{ if } n\in C_{3} \text{ and }n=p+qy \\ E_{(0)}\cap E^{y}[e_{1}], \text{ if } n\in C_{4} \text{ and }n=p+qy
\end{cases}, y\neq 0.
\]
\begin{example}\label{E_{p,q}}
It is easy to check that the $\mathbb{Z}$-graded algebra $E_{(p,q)}^{(1,\infty)}$ satisfies the following $\mathbb{Z}$-graded polynomial identities: 
\begin{itemize}
\item $x$, if $\alpha(x)\notin C$;
\item $[x_{1},x_{2}]$, if $\alpha(x_{1})\in C_{1}\cup C_{4}$;
\item $x_{1}x_{2}+x_{2}x_{1}$, if $\alpha(x_{1})$, $\alpha(x_{2})\in C_{2}\cup C_{3};$
\item $x_{1}x_{2}$, if $\alpha(x_{1})$, $\alpha(x_{2})\in C_{3}\cup C_{4}$.
\end{itemize}
In the following example we shall consider a more general situation which will imply that the  $T_{\mathbb{Z}}$-ideal $T_{\mathbb{Z}}(E_{(p,q)}^{(1,\infty)})$ is generated by the graded identities above. 
\end{example}

\begin{example}
Let $p$, $q$ be primes such that $p<q$. We consider once again the decomposition 
\[
L=L_{p}^{k}\oplus L_{q}^{\infty}.
\]
We suppose that $\dim L_{p}^{k}=k<p$ is an integer and $\dim L_{q}^{\infty}=\infty$. Up to a change of basis we can assume that $L_{p}^{k}$ is the span of $\{e_{1},\ldots,e_{k}\}$ and $L_{q}^{\infty}$ is the span of $\{e_{k+1},e_{k+2},\ldots\}$.

In this way we have the $\mathbb{Z}$-grading $E_{(p,q)}^{(k,\infty)}=\oplus_{n\in\mathbb{Z}} A_{n}$. Here $A_n$ is the homogeneous component of degree $n$. Once again we define 
\[
C=\{px+qy\mid x=0,1,\ldots, k\text{ and } y\in\mathbb{N}_{0}\}.
\]
We form the sets 
\begin{align*}
C_{1}=\{qy\mid y\text{ is even}\}, & \quad
C_{2}=\{qy\mid y\text{ is odd}\},\\
D_{i}=\{pi+qy\mid y\equiv (i+1)\pmod{\mathbb{Z}_{2}}\}, & \quad 
\hat{D_{i}}=\{pi+qy\mid y\equiv i\pmod{\mathbb{Z}_{2}}\},\\
\end{align*}
for $i=1$, \dots, $k$.
Obviously we have 
\[
C=C_{1}\cup C_{2}\cup D_{1}\cup\hat{D_{1}}\cdots D_{k}\cup  \hat{D_{k}}.
\]
Here we use the notation
\begin{align*}
{E_{t}}^y[e_{1},\ldots, e_{k}]=&\text{ the set of $E$ formed by all products of $e_i$'s of length } y+t\text{ with $t$ factors in } \{e_{1},\ldots, e_{k}\}.\\
E^y(e_{1},\ldots, e_{k})=&\text{ the set of $E$ formed by all products of $e_i$'s of length } y\text{ without factors in } \{e_{1},\ldots, e_{k}\}.
\end{align*}
Therefore we have the following description of the homogeneous components:
\[
A_{n}=\begin{cases} 0,\text{ if }  n\notin C \\ F, \text{ if } n=0 \\
E_{(0)}\cap E^{y}(e_{1},\ldots,e_{k}), \text{ if } n\in C_{1} \text{ and }n=qy \\ 
E_{(1)}\cap E^{y}(e_{1},\ldots,e_{k}), \text{ if } n\in C_{2} \text{ and }n=qy \\ 
E_{(1)}\cap E_{1}^{y}[e_{1},\ldots,e_{k}], \text{ if } n\in D_{1} \text{ and }n=p+qy \\ 
E_{(0)} \cap E_{1}^{y}[e_{1}, \ldots,e_{k}], \text{ if } n\in \hat{D_{1}} \text{ and }n=p+qy \\ 
\vdots \\ 
E_{(1)}\cap E_{k}^{y}[e_{1},\ldots,e_{k}], \text{ if } n\in D_{k} \text{ and }n=kp+qy \\ 
E_{(0)}\cap E_{k}^{y}[e_{1},\ldots,e_{k}], \text{ if } n\in \hat{D_{k}} \text{ and }n=kp+qy
\end{cases}.
\]
We assume above that $y\neq 0$. The description of the graded identities of $E_{(p,q)}^{(k,\infty)}$ is given below. First it is immediate to check that the following polynomials are graded identities for the $\mathbb{Z}$-graded algebra $E_{(p,q)}^{(k,\infty)}$. Here we assume $p$, $q$ are primes and $k<p<q$. 

\begin{itemize}
\item $x$, if $\alpha(x)\notin C$;
\item $[x_{1},x_{2}]$, if $\alpha(x_{1})\in C_{1}\cup\hat{D_{1}}\cup\cdots\cup\hat{D_{k}};$
\item $x_{1}x_{2}+x_{2}x_{1}$, if $\alpha(x_{1})$, $\alpha(x_{2})\in C_{2}\cup D_{1}\cup\cdots\cup D_{k};$
\item $x_{1}\cdots x_{l_1} u_{1}\cdots u_{l_2}\cdots w_{1}\cdots w_{l_k}$, if $\alpha(x)\in D_{1}\cup\hat{D_{1}},\ldots,\alpha(w)\in D_{k}\cup\hat{D_{k}}$ and \linebreak $1\times l_{1}+\cdots + k\times l_{k}>k$. 
\end{itemize}

Now we prove that these graded identities generate the $T_{\mathbb{Z}}$-ideal of the $\mathbb{Z}$-graded algebra $E_{(p,q)}^{(k,\infty)}$. Let $I$ be the ideal of graded identities generated by the above polynomials. As the field is of characteristic 0 we consider multilinear polynomials only. 

Assume that $f(z_{1},\ldots,z_{n})$ is a multilinear $\mathbb{Z}$-graded polynomial and that $f\notin I$. We shall show that $f$ does not vanish on $E_{(p,q)}^{(k,\infty)}$. Clearly we can assume that $\alpha(z_{i})\in C$ for every $i=1$, \dots, $n$. Modulo the identities $[x_{1},x_{2}]$ and $x_{1}x_{2}+x_{2}x_{1}$ there exists $\beta\neq 0$ such that:
\[
f=\beta x_{1}\cdots x_{l_1} u_{1}\cdots u_{l_2}\cdots w_{1}\cdots w_{l_k}\theta_{1}\cdots\theta_{s}
\]
where
\[
\alpha(x)\in D_{1}\cup\hat{D_{1}}, \quad \ldots, \quad \alpha(w)\in D_{k}\cup\hat{D_{k}}, \quad \alpha(\theta)\in C_{1}\cup C_{2}.
\]
Due to the last identities that define $I$ we have that
\[
1\times l_{1}+\cdots + k\times l_{k}\leq k.
\]
Now it is easy to construct a substitution that does not vanish $f$.  
\end{example}

\section{Further discussion}

Here we raise several open questions related to our results.
\begin{enumerate}
\item If the $\mathbb{Z}$-gradings $E_{(n_{1},\ldots, n_{l})}^{(v_{1},\ldots, v_{l})}$ and $E_{(m_{1},\ldots, m_{t})}^{(u_{1},\ldots, u_{t})}$ are isomorphic what can one say about the lists $(n_{1},\ldots, n_{l})$, $(v_{1},\ldots, v_{l})$, and $(m_{1},\ldots, m_{t})$, $(u_{1},\ldots, u_{t})$?

\item What do the $\mathbb{Z}$-graded polynomial identities of $E^{k^\ast}$, $E^{\infty}$ and $E^{k}$ over a finite field look like? 

\item
What are the $\mathbb{Z}$-graded identities of $E^k$ if the base field is infinite of positive characteristic $p>2$?

\item What happens with the $\mathbb{Z}$-gradings on $E$ that are not of finite coverage? For example, we can define a $\mathbb{Z}$-grading on $E$ by the index, that is:
\[
\displaystyle \|e_{i}\|=i,
\]
for every $i\in\mathbb{N}$. 
This $\mathbb{Z}$-grading has an interesting combinatorial interpretation. What are the respective graded polynomial identities? 
\end{enumerate}

\bigskip

\begin{center}
\textbf{Acknowledgment}
\end{center}

\noindent
We express our thanks to the Referee whose comments were taken into account. We are particularly grateful for the Referee's suggestion to work on and include Section 6 of the present paper.

\end{document}